\documentclass[11pt]{article}
\usepackage{amssymb}
\usepackage{epic}
\usepackage{authblk}
\begin{document}
\newcommand{\qed}{\hfill$\rule{.05in}{.1in}$ \vspace{.3cm}}
\newcommand{\pf}{\noindent{\bf Proof: }}
\newtheorem{thm}{Theorem}
\newtheorem{lem}{Lemma}
\newtheorem{prop}{Proposition}
\newtheorem{ex}{Example}
\newtheorem{cor}{Corollary}
\newtheorem{conj}{Conjecture}
\newtheorem{prob}{Problem}
\newtheorem{claim}{Claim}
\newcommand{\beq}{\begin{equation}}
\newcommand{\eeq}{\end{equation}}
\newcommand{\<}[1]{\left\langle{#1}\right\rangle}
\newcommand{\be}{\begin{enumerate}}
\newcommand{\ee}{\end{enumerate}}
\newcommand{\al}{\alpha}
\newcommand{\ep}{\epsilon}
\newcommand{\si}{\sigma}
\newcommand{\om}{\omega}
\newcommand{\la}{\lambda}
\newcommand{\La}{\Lambda}
\newcommand{\ga}{\gamma}
\newcommand{\im}{\Rightarrow}
\newcommand{\2}{\vspace{.2cm}}
\newcommand{\es}{\emptyset}
\newcommand{\lz}{\langle}
\newcommand{\rz}{\rangle}
\newcommand{\bh}{\hat}

\newcommand{\ad}{\mathrel{\raisebox{0.2ex}{$\scriptstyle\sim$}}}
\newcommand{\nad}
{\mathrel{\raisebox{0.2ex}{$\scriptstyle\nsim$}}}

\date{15 March 2026}

\title{A Characterization of $P_6$-Free Irredundance\\ Perfect Graphs}
\author[1]{Vadim Zverovich\footnote{Corresponding author (e-mail: vadim.zverovich@uwe.ac.uk)}}
\author[2]{Pavel Skums}
\author[3]{Lutz Volkmann}
\affil[ ]{\footnotesize $^1$Mathematics and Statistics Research Group, University of the West of England, Bristol, UK}
\affil[2]{\footnotesize School of Computing, University of Connecticut, Storrs, Connecticut, USA}
\affil[3]{\footnotesize Department II of Mathematics,
RWTH Aachen,
Aachen 52056, Germany} 

\maketitle

\begin{abstract}
Let $ir(G)$ and $\gamma(G)$ be the irredundance number and the
domination number of a graph $G$, respectively. A graph $G$ is
called {\it irredundance perfect} if $ir(H)=\gamma(H)$ for every
induced subgraph $H$ of $G$. The subclass of $P_6$-free irredundance perfect graphs has been studied extensively. In this paper, we
present a characterization of this graph class in terms of eleven forbidden induced subgraphs.

\vspace*{.3cm} {\footnotesize \noindent Keywords: {\it
irredundance perfect graphs, irredundance number, domination
number, $P_6$-free graphs
}}

\end{abstract}


All graphs considered in this paper are finite, simple, and undirected. For a graph $G$, let $V(G)$ denote its vertex set; for $X \subseteq V(G)$, let $\<X$ be the subgraph of $G$ induced by $X$. For $X, Y \subseteq V(G)$, we write $X \ad Y$ if every vertex of $X$ is adjacent to every vertex of $Y$, and $X \nad Y$ if no vertex of $X$ is adjacent to any vertex of $Y$. In particular, $x \ad y$ means that the vertices $x$ and $y$ are adjacent. For a vertex $x \in V(G)$, let $N(x)$ and $N[x] = N(x)\cup \{x\}$ denote its open and closed neighborhoods, respectively. This notation extends to subsets $X\subseteq V(G)$ by defining $N(X)=\bigcup_{x\in X} N(x)$
and $N[X]=N(X)\cup X$.

A set $X\subseteq V(G)$ {\it dominates} a set $Y\subseteq V(G)$ if
$Y\subseteq N[X]$. In particular, if $X$ dominates $V(G)$, then
$X$ is called a {\it dominating set}.
The {\it domination number} $\ga(G)$ is the
cardinality of a minimum dominating set of $G$.
Domination in graphs has been extensively studied and has numerous real-life applications \cite{Zve3}.

For $x\in X$, the set
$$
PN(x,X) = N[x]-N[X - \{x\}]
$$
is called the {\it private neighborhood} of $x$; the shorter notation $PN(x)$ is used when $X$ is clear from the context.
If $PN(x,X)=\es$, then $x$ is said to be {\it redundant} in $X$.
A set $X$ containing no redundant vertex is called {\it irredundant}.
The minimum cardinality of a maximal
irredundant set of $G$ is the {\it irredundance number} $ir(G)$.
The following proposition provides the necessary and sufficient
condition for an irredundant set to be maximal.

\begin{prop}
[\cite{Vol4}] Let $X$ be an irredundant set of $G$, and
$U=V(G)-N[X]$. The set $X$ is a maximal irredundant set if and
only if for any $v\in N[U]$, the vertex $v$ dominates $PN(x,X)$
for some vertex $x\in X$.
\end{prop}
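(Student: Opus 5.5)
The proof works directly from the definition of maximality. An irredundant set $X$ is maximal precisely when, for every vertex $v\notin X$, the set $X\cup\{v\}$ fails to be irredundant. I will analyse what failure of irredundance means for $X\cup\{v\}$. Adding $v$ affects private neighbourhoods in two ways. First, $v$ itself may be redundant in $X\cup\{v\}$. Second, some $x\in X$ may lose all its private neighbours, because $PN(x,X\cup\{v\})=PN(x,X)-N[v]$.

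The key observation for the first way is about when $PN(v,X\cup\{v\})$ is empty. This set is $N[v]-N[X]$, which is exactly $N[v]\cap U$ with $U=V(G)-N[X]$. So $v$ is redundant in $X\cup\{v\}$ if and only if $v\notin N[U]$. This gives the following reformulation: $X\cup\{v\}$ is not irredundant if and only if either $v\notin N[U]$, or $PN(x,X)\subseteq N[v]$ for some $x\in X$, i.e.\ $v$ dominates $PN(x,X)$.

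For necessity, assume $X$ is maximal and take $v\in N[U]$. I first check that $v\notin X$. Every vertex of $U$ is outside $N[X]$, so no vertex of $X$ lies in $U$ or is adjacent to a vertex of $U$. Hence $X\cap N[U]=\es$. By maximality, $X\cup\{v\}$ is not irredundant, and $v$ is not redundant in it because $v\in N[U]$. Therefore some $x\in X$ has become redundant, which means $v$ dominates $PN(x,X)$.

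For sufficiency, take any $v\notin X$ and split into two cases. If $v\notin N[U]$, then $v$ is redundant in $X\cup\{v\}$. If $v\in N[U]$, the hypothesis gives an $x\in X$ with $PN(x,X)\subseteq N[v]$, so $x$ is redundant in $X\cup\{v\}$. In either case $X\cup\{v\}$ is not irredundant, and hence $X$ is maximal.

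The main point needing care is the bookkeeping of private neighbourhoods once $v$ is added. One must confirm that $PN(x,X\cup\{v\})=PN(x,X)-N[v]$, where closed neighbourhoods matter because $x$ may be its own private neighbour. One must also confirm that $X\cap N[U]=\es$, so the condition is only ever applied to vertices outside $X$. Everything else is immediate from the definitions.
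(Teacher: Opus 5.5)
The paper does not prove this proposition; it quotes it from the literature without proof, so there is no in-paper argument to compare against. Your proof is correct and is the standard direct verification: for $v\notin X$, the identities $PN(v,X\cup\{v\})=N[v]\cap U$ and $PN(x,X\cup\{v\})=PN(x,X)-N[v]$, together with $X\cap N[U]=\emptyset$, give both directions.

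The one step you leave implicit is that maximality, which is with respect to inclusion, can be tested on one-vertex extensions; this is needed in your sufficiency direction. It follows from heredity of irredundance: if $x\in X\subseteq Y$ then $PN(x,X)\supseteq PN(x,Y)$. Hence if some irredundant proper superset $Y$ of $X$ existed, then $X\cup\{v\}$ would be irredundant for any $v\in Y-X$.
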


It is well known that for any graph $G$, $ir(G) \le \gamma(G)$. 
A graph $G$ is \emph{irredundance perfect} if $ir(H)=\gamma(H)$ for every induced subgraph $H$ of $G$. 
There are numerous interesting results and conjectures concerning irredundance perfect graphs 
\mbox{[1--4,} 6--13]; some of them are discussed below. 
Related classes of graphs, such as domination perfect graphs, upper domination perfect graphs, and upper irredundance perfect graphs, have also been studied (see, e.g., \cite{Gut,Zve1,Zve2}).


One line of research on irredundance perfect graphs has focused on their characterization by forbidden induced subgraphs. 
Below we outline some milestones in this direction.
The earliest result was obtained by Bollobás and Cockayne:

\begin{thm}
[Bollob\'as and Cockayne \cite{Bol}]
\label{B1}
If a graph $G$ contains no pair of induced subgraphs isomorphic to $P_4$
with vertex sequences $(a_i,b_i,c_i,d_i)$, $i=1,2$, such that 
$b_1,b_2,c_1,$
$c_2,d_1,d_2$ are pairwise distinct and 
$a_1,a_2 \not \in \{c_1,c_2,d_1,d_2\}$, 
then $G$ is irredundance perfect.
\end{thm}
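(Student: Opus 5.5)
The class of graphs described in the hypothesis is hereditary: an induced subgraph of $G$ cannot contain the forbidden configuration unless $G$ does. So it suffices to prove $ir(G)=\ga(G)$ for every graph $G$ without the configuration. I will argue by contradiction, using the characterization of maximal irredundant sets in the Proposition above.

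Let $X$ be a maximal irredundant set with $|X|=ir(G)$, and assume that $X$ is not dominating, i.e.\ $U=V(G)-N[X]\neq\es$. Among all minimum maximal irredundant sets, choose $X$ to minimize $|U|$ (or, equivalently, to maximize $|N[X]|$). Every $x\in X$ has $PN(x,X)\neq\es$. Since $X$ does not dominate $U$, no $x$ has a private neighbour in $U$, so $PN(x,X)\subseteq N[X]$.

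Take $u\in U$. Since $u\in N[U]$, the Proposition gives some $x_1\in X$ with $u$ dominating $PN(x_1)$. But $u$ is nonadjacent to $x_1$, because $u\notin N[X]$. Hence $x_1\notin PN(x_1)$, which means $x_1$ is adjacent to some other vertex of $X$, and every private neighbour of $x_1$ is an external vertex $p\ad x_1$ with $p\ad u$.

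Now pick a private neighbour $p_1$ of $x_1$. We have $x_1\ad p_1\ad u$, and $x_1$ has a neighbour $y_1\in X$ with $y_1\nad p_1$ (by privacy) and $y_1\nad u$. This yields an induced $P_4$ of the form $(y_1,x_1,p_1,u)$.

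To obtain a second such path, apply the Proposition again to a vertex of $N[U]$ such as $p_1$ itself, or to a second vertex $u'$ reached through a modified set. Alternatively, replace $x_1$ by $p_1$, setting $X'=(X-\{x_1\})\cup\{p_1\}$ (or adding $u$). Minimality of $|X|$ and of $|U|$ forces $X'$ to be either redundant or non-maximal, and either failure produces a second vertex $x_2$ with a private neighbour $p_2$ and a vertex $u_2$ or $d_2$ at distance two. This gives a second induced $P_4$ $(a_2,b_2,c_2,d_2)$. The ends $a_i$ lie in $X$ while $c_i,d_i$ lie outside $X$, so the condition $a_1,a_2\notin\{c_1,c_2,d_1,d_2\}$ is automatic.

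The main obstacle is the distinctness requirement on $b_1,b_2,c_1,c_2,d_1,d_2$: the two private neighbours must be different and the two far ends must be different. I would first handle the case $x_2\neq x_1$, where privacy gives $p_1\neq p_2$ directly. The degenerate case, where all failures route through the same $x_1$ or the same $u$, I would treat by showing that the exchange $X'=(X-\{x_1\})\cup\{u\}$ is then a maximal irredundant set of the same size with strictly smaller $U$, contradicting the choice of $X$. Once the two paths are shown distinct, $G$ contains the forbidden pair, which is the required contradiction.
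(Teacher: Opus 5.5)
The paper gives no proof of this statement: it is quoted from Bollob\'as and Cockayne, and the paper remarks that it can also be recovered from Theorem~\ref{t1}. So your argument has to stand on its own, and it has a genuine gap.

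Your first path $(y_1,x_1,p_1,u)$ is correct, but everything after it is a menu of options rather than an argument. The sentence ``minimality of $|X|$ and of $|U|$ forces $X'$ to be either redundant or non-maximal'' does not follow. Your choice of $X$ only excludes maximal irredundant sets of size $|X|$ with fewer undominated vertices, and nothing is shown to produce $x_2,p_2,u_2$ with the required distinctness.

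Worse, your fallback for the degenerate case is false. Take the tree obtained from the path $r\,y\,x\,p$ by attaching two leaves $u_1,u_2$ to $p$, and let $X=\{y,x\}$. Then:
\begin{itemize}
\item $PN(y)=\{r\}$, $PN(x)=\{p\}$, $U=\{u_1,u_2\}$;
\item $X$ is maximal irredundant and $ir=\gamma=2$;
\item the tree contains no forbidden pair, since the six distinct vertices $b_1,b_2,c_1,c_2,d_1,d_2$ would exhaust it, forcing $a_1=b_2$, $a_2=b_1$ and hence a path on six vertices.
\end{itemize}
Every $u\in U$ is served only by $x$. Yet $(X-\{x\})\cup\{u_1\}=\{y,u_1\}$ is irredundant but not maximal ($u_2$ can be added), so your exchange yields nothing contradicting the choice of $X$. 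This is not an accident. Whenever every $u\in U$ sees private neighbours of only one non-isolated vertex $x^*$, the vertices of $U-N[u]$ cannot dominate any private neighbourhood with respect to $(X-\{x^*\})\cup\{u\}$. So by Proposition~1 that set is never maximal unless $U\subseteq N[u]$. The underlying problem is that a forbidden pair need not exist at all; in some cases you must instead exhibit a dominating set of size $|X|$.

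The missing idea is as follows. Call a non-isolated $x\in X$ and a vertex $u\in U$ \emph{linked} if $u$ is adjacent to some vertex of $PN(x)$. Two links $(x_1,u_1)$, $(x_2,u_2)$ with $x_1\neq x_2$ and $u_1\neq u_2$ give exactly your two paths $(y_i,x_i,p_i,u_i)$, with all distinctness conditions holding, since private neighbourhoods of distinct vertices are disjoint. Every $u$ is linked to each $x\in X_u$, so there are two cases.
\begin{itemize}
\item If $U=\{u\}$, then $(X-\{x\})\cup\{u\}$ with $PN(x)\subseteq N(u)$ dominates $G$.
\item Otherwise all links share one vertex $x^*$. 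Then every $u\in U$ is adjacent to all of $PN(x^*)$ and to no private neighbour of any other non-isolated vertex of $X$.
\end{itemize}
In the second case take $p\in PN(x^*)$; it dominates $U$. If $PN(x^*)\subseteq N[p]$, then $(X-\{x^*\})\cup\{p\}$ dominates $G$. Otherwise pick $q\in PN(x^*)-N[p]$ and apply Proposition~1 to $p\in N[U]$ (your instinct to do this was right). It gives $x'\neq x^*$ with $PN(x')\subseteq N(p)$, and $x'$ is non-isolated because $x'\notin N[p]$. Choose $p'\in PN(x')$, a neighbour $y^*\in X$ of $x^*$, and distinct $u_1,u_2\in U$. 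Then $(x',p',p,u_1)$ and $(y^*,x^*,q,u_2)$ form a forbidden pair; here $p'$ is not adjacent to $u_1$ because $x'\neq x^*$. Note that the first path is not of your form $(y,x,p,u)$. This gives $\gamma(G)\le|X|=ir(G)$ directly, with no need to minimize $|U|$.
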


The following result of Favaron strengthens Theorem~\ref{B1}, as the graphs forbidden in Theorem~\ref{F1} are included among those forbidden in Theorem~\ref{B1}.

\begin{thm}
[Favaron \cite{Fav}]
\label{F1}
If a graph $G$ does not contain the graphs $P_6$, $C_6$, $2P_4$ 
and $G_1-G_3$ (see Figure \ref{5graphs}) as induced subgraphs, then $G$
is an irredundance perfect graph.
\end{thm}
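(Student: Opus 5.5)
The argument is by minimal counterexample. Forbidden induced subgraphs are inherited by induced subgraphs, so it is enough to prove $ir(G)=\gamma(G)$ for every graph $G$ free of $P_6$, $C_6$, $2P_4$ and $G_1$--$G_3$.

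Let $X$ be a maximal irredundant set with $|X|=ir(G)$. Among all such sets, choose $X$ so that $U=V(G)-N[X]$ is as small as possible. If $U=\es$, then $X$ is dominating and $\gamma(G)\le ir(G)$, so we are done. Assume from now on that $U\neq\es$; we will build a dominating set of size $|X|$, which contradicts $ir(G)<\gamma(G)$, or else a maximal irredundant set of the same size with smaller $U$.

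First we collect structural facts. Take $u\in U$. Since $u\in N[U]$, the Proposition (Volkmann's criterion) gives some $x\in X$ such that $u$ dominates $PN(x,X)$. Because $u\notin N[X]$, $x$ is not adjacent to $u$, so $x\notin PN(x,X)$. Hence $x$ is adjacent to some other vertex of $X$, and every private neighbour of $x$ lies outside $X$ and is adjacent to $u$. Call such $x$ \emph{isolated-free}. Each vertex of $U$ is attached in this way to at least one $x\in X$. Applying the criterion also to the neighbours $v\in N(u)\subseteq N[U]$ produces induced paths of the form $x' - y - x - p - u$. Here $y\in X$ or $y$ is a non-private neighbour of $x$, $p\in PN(x,X)$, and $x'$ witnesses that $p$ is private to $x$.

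The core of the proof is a swap. Let $X_0$ be the set of $x\in X$ to which some $u\in U$ is attached. We replace each $x\in X_0$ by a vertex $p_x\in PN(x,X)$ chosen to dominate $U$, or by $u$ itself in suitable cases. We must then show one of two things: the new set is dominating, or it is still irredundant and maximal with a strictly smaller $U$. A failure of either property yields a vertex $w$ that is undominated, or a vertex that loses all its private neighbours. Combined with the paths above, such a $w$ creates a long induced path. When only one $x$ is involved, this path is $P_6$ or $C_6$, or one of $G_1$--$G_3$ if some chord is present. When two distinct vertices $x_1,x_2\in X_0$ are involved with disjoint witnessing structures, it is $2P_4$. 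I would split into cases according to whether $|X_0|=1$, or $|X_0|\ge 2$ with the relevant private neighbourhoods adjacent or non-adjacent, and check the adjacencies among $u,p,x,y,w$ in each case.

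I expect the main obstacle to be the case where two attached vertices share structure, for example a common neighbour $y$ or adjacent private neighbours. There the forbidden graph that appears depends on many optional edges. I would handle it by first proving a lemma: each $u\in U$ can be attached to exactly one $x$, and $PN(x,X)\subseteq N(u)$ for all $u$ attached to $x$. This would let the replacement be done one vertex at a time. After each replacement I would recheck maximality using the Proposition, which reduces the analysis to configurations on at most about eight vertices, where the forbidden list can be checked by hand.
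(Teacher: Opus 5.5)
\textbf{There is a genuine gap, and it sits in the step that carries the theorem.} You claim that whenever your swap fails, the paths $x'-y-x-p-u$ force one of $P_6$, $C_6$, $2P_4$, $G_1$--$G_3$. For the swap you describe, this is false.

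Take the $5$-cycle $u\,y_1\,f_1\,f_2\,y_2\,u$ and add a vertex $w$ with $N(w)=\{f_1,f_2\}$.
\begin{itemize}
\item This graph contains none of the six forbidden graphs.
\item $X=\{f_1,f_2\}$ is a minimum maximal irredundant set, with $PN(f_i,X)=\{y_i\}$ and $U=\{u\}$.
\item $u$ is attached to both $f_1$ and $f_2$, so $X_0=X$.
\end{itemize}
Every way of replacing both $f_1,f_2$ by private neighbours or by $u$ leaves $w$ undominated. So the new set is neither dominating nor a maximal irredundant set with smaller $U$. Your one extra hypothesis, minimality of $U$, is never used in the swap analysis, so nothing in the argument rescues this case.

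The same example shows three further problems.
\begin{itemize}
\item Your intended lemma, that each $u$ is attached to exactly one $x$, fails for minimum maximal irredundant sets.
\item Two vertices of $X_0$ need not have disjoint witnesses, so the $2P_4$ case does not arise as you expect.
\item You announce a minimal counterexample but never use it. That minimality is what gives that $\langle X\rangle$ has no isolated vertex and that $\langle L_X\rangle$ is connected for every component $L$ of $H-W$ (Lemmas~\ref{l1} and~\ref{l3}).
\end{itemize}

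\textbf{The missing idea is to swap much less.} The paper does not prove this statement directly; it obtains it as a corollary of Theorem~\ref{t1}. The tools from that proof suffice here.

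Let $F\subseteq X$ be a smallest set meeting every $X_u$. For distinct $f_1,f_2\in F$, Lemma~\ref{l2} gives $y_i\in PN(f_i)$ and $u_i\in U$ such that $u_iy_i$ are edges and $u_1y_2$, $u_2y_1$ are non-edges. In your class this forces $|F|=1$:
\begin{itemize}
\item If $f_1f_2$ is an edge, $\langle u_1,y_1,f_1,f_2,y_2,u_2\rangle$ is $P_6$, $C_6$, $G_1$ or $G_2$, according to which of $y_1y_2$, $u_1u_2$ are edges.
\item If $f_1,f_2$ have a common $X$-neighbour $z$, then $\langle u_1,y_1,f_1,z,f_2,y_2,u_2\rangle$ contains $P_6$ or is $G_3$.
\item If $f_1,f_2$ are farther apart in the same component $L$, take a shortest path $f_1v_1v_2v_3\cdots$ in the connected graph $\langle L_X\rangle$. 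Then $\langle u_1,y_1,f_1,v_1,v_2,v_3\rangle\cong P_6$.
\item If $f_1,f_2$ lie in different components of $H-W$, let $x_i$ be an $X$-neighbour of $f_i$. The induced paths $u_iy_if_ix_i$ form $2P_4$.
\end{itemize}

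With $F=\{f\}$, do not simply replace $f$ by a private neighbour, since the rest of $PN(f)$ may be missed. Instead pick $y\in PN(f)$. It is adjacent to all of $U$, and by the Proposition $PN(x,X)\subseteq N[y]$ for some $x\in X$. Then $(X-\{x\})\cup\{y\}$ dominates $H$:
\begin{itemize}
\item If $x=f$, then $x$ is adjacent to $y$.
\item If $x\neq f$, then $x\notin PN(x,X)$, so $x$ has a neighbour in $X-\{x\}$.
\item Every vertex of $W$ has at least two neighbours in $X$, so it keeps one.
\end{itemize}
This contradicts $ir(H)<\gamma(H)$. Note that the vertex dropped is chosen by the Proposition and need not be $f$ itself.
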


\vspace*{-0.2cm}

\begin{figure}[h!]
\begin{center}
\setlength{\unitlength}{1cm}
\thicklines
\begin{picture}(15,4)
\put(1,1){\circle*{0.2}}
\put(1,2){\circle*{0.2}}
\put(1,3){\circle*{0.2}}
\put(2,1){\circle*{0.2}}
\put(2,2){\circle*{0.2}}
\put(2,3){\circle*{0.2}}
\drawline(1,1)(1,3)(2,3)(2,1)
\drawline(1,2)(2,2)

\put(4,1){\circle*{0.2}}
\put(4,2){\circle*{0.2}}
\put(4,3){\circle*{0.2}}
\put(5,1){\circle*{0.2}}
\put(5,2){\circle*{0.2}}
\put(5,3){\circle*{0.2}}
\drawline(4,1)(4,3)(5,3)(5,1)(4,1)
\drawline(4,2)(5,2)

\put(7,1){\circle*{0.2}}
\put(7,2){\circle*{0.2}}
\put(7,3){\circle*{0.2}}
\put(8,1){\circle*{0.2}}
\put(8,2){\circle*{0.2}}
\put(8,3){\circle*{0.2}}
\put(7.5,3.5){\circle*{0.2}}
\drawline(7,1)(7,3)(7.5,3.5)(8,3)(8,1)
\drawline(7,2)(8,2)

\put(10,1){\circle*{0.2}}
\put(10,2){\circle*{0.2}}
\put(10,3){\circle*{0.2}}
\put(11,1){\circle*{0.2}}
\put(11,2){\circle*{0.2}}
\put(11,3){\circle*{0.2}}
\put(10.5,3.5){\circle*{0.2}}
\drawline(10,1)(10,3)(10.5,3.5)(11,3)(11,1)
\drawline(10,2)(11,2)
\drawline(10,3)(11,3)

\put(13,1){\circle*{0.2}}
\put(13,2){\circle*{0.2}}
\put(13,3){\circle*{0.2}}
\put(14,1){\circle*{0.2}}
\put(14,2){\circle*{0.2}}
\put(14,3){\circle*{0.2}}
\put(13.5,3.5){\circle*{0.2}}
\drawline(13,1)(13,3)(13.5,3.5)(14,3)(14,1)(13,1)
\drawline(13,2)(14,2)
\drawline(13,3)(14,3)

\put(1.3,.2){$G_1$}
\put(4.3,.2){$G_2$}
\put(7.3,.2){$G_3$}
\put(10.3,.2){$G_4$}
\put(13.3,.2){$G_5$}
\end{picture}
\end{center}
\caption{Graphs $G_1-G_5$.}
\label{5graphs}
\end{figure}
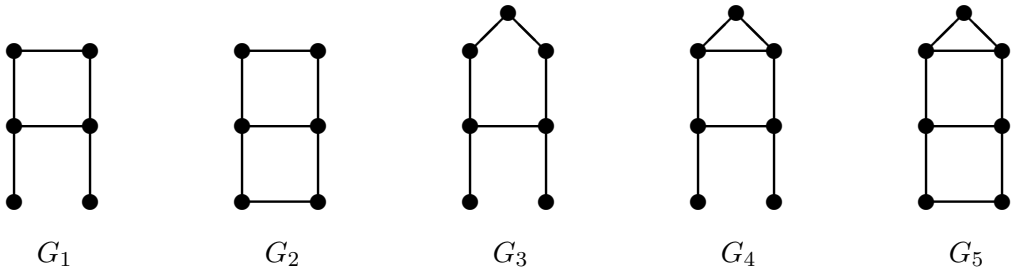

Favaron conjectured that only three of the six graphs in Theorem~\ref{F1} suffice as forbidden induced subgraphs 
to characterize irredundance perfect graphs.

\begin{conj}
[Favaron \cite{Fav}, see also \cite{Cha}]
\label{conj1}
If a graph $G$ does not contain the graphs $P_6$ and $G_1,G_2$ (Figure \ref{5graphs}) as induced subgraphs, then $G$ is irredundance perfect.
\end{conj}

Henning \cite{Hen} proved Conjecture \ref{conj1} for graphs $G$ 
with $ir(H)\le 4$ for every induced subgraph $H$ of $G$; it was later completely proved 
in \cite{Pue,Vol2}.
In fact, \cite{Vol2} provides the stronger result: a graph is irredundance perfect if it does not contain $P_6$, $G_1$ and $G_5$ (Figure \ref{5graphs}) as induced subgraphs.

Another conjecture on irredundance perfect graphs, independent from Conjecture \ref{conj1}, has been posed in \cite{Fau}:

\begin{conj}
[Faudree, Favaron and Li \cite{Fau}]
\label{conj2}
Any $P_5$-free graph is irredundance perfect.
\end{conj}

The validity of Conjecture~\ref{conj2} follows from a result of Puech \cite{Pue}, who showed that a graph is irredundance perfect if it contains neither $P_6$ nor $H$ as an induced subgraph, where $H$ is the graph obtained from $G_4$ by deleting one of its pendant vertices. 
In the same paper, Puech proposed Conjecture \ref{conj3} that generalizes Conjectures~\ref{conj1} and~\ref{conj2}: 

\begin{conj}
[Puech \cite{Pue}]
\label{conj3}
If a graph $G$ does not contain the graphs $P_6$ and $G_4,G_5$ (Figure \ref{5graphs}) as induced subgraphs, then $G$ is irredundance perfect.
\end{conj}

Volkmann and Zverovich proved 
Conjecture \ref{conj3} 
in \cite{Vol5}.

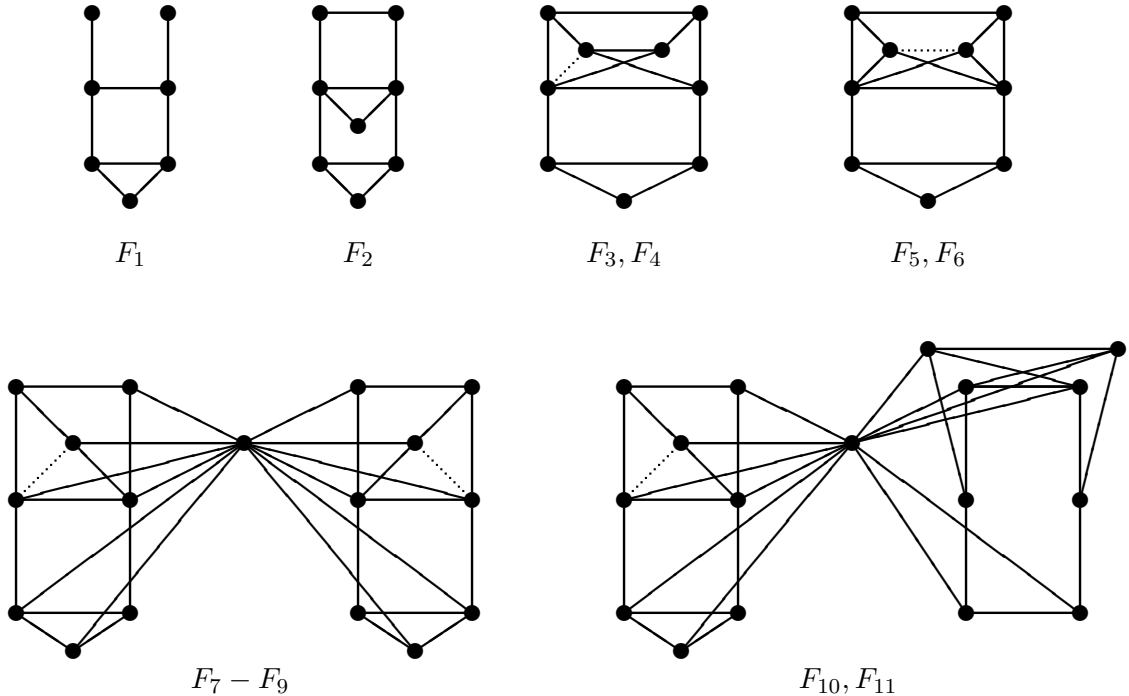
\begin{figure}[b!]
\begin{center}
\setlength{\unitlength}{1cm}
\thicklines
\begin{picture}(14,4.5)
\put(1,2){\circle*{0.2}}
\put(1,3){\circle*{0.2}}
\put(1,4){\circle*{0.2}}
\put(1.5,1.5){\circle*{0.2}}
\put(2,2){\circle*{0.2}}
\put(2,3){\circle*{0.2}}
\put(2,4){\circle*{0.2}}
\drawline(1,4)(1,2)(1.5,1.5)(2,2)(2,4)
\drawline(1,2)(2,2)
\drawline(1,3)(2,3)
\put(1.3,0.7){$F_1$}

\put(4,2){\circle*{0.2}}
\put(4,3){\circle*{0.2}}
\put(4,4){\circle*{0.2}}
\put(4.5,1.5){\circle*{0.2}}
\put(5,2){\circle*{0.2}}
\put(5,3){\circle*{0.2}}
\put(5,4){\circle*{0.2}}
\drawline(4,4)(4,2)(4.5,1.5)(5,2)(5,4)(4,4)
\drawline(4,2)(5,2)
\drawline(4,3)(5,3)
\put(4.5,2.5){\circle*{0.2}}
\drawline(4,3)(4.5,2.5)(5,3)
\put(4.3,.7){$F_2$}

\put(7,2){\circle*{0.2}}
\put(7,3){\circle*{0.2}}
\put(7,4){\circle*{0.2}}
\put(8,1.5){\circle*{0.2}}
\put(9,2){\circle*{0.2}}
\put(9,3){\circle*{0.2}}
\put(9,4){\circle*{0.2}}
\drawline(7,4)(7,2)(8,1.5)(9,2)(9,4)(7,4)
\drawline(7,2)(9,2)
\dottedline{.08}(7.5,3.5)(7,3)
\drawline(7,3)(9,3)
\drawline(8.5,3.5)(7.5,3.5)
\put(7.5,3.5){\circle*{0.2}}
\put(8.5,3.5){\circle*{0.2}}
\drawline(7,3)(8.5,3.5)(9,4)
\drawline(7,4)(7.5,3.5)(9,3)
\put(7.5,.7){$F_3,F_4$}

\put(11,2){\circle*{0.2}}
\put(11,3){\circle*{0.2}}
\put(11,4){\circle*{0.2}}
\put(12,1.5){\circle*{0.2}}
\put(13,2){\circle*{0.2}}
\put(13,3){\circle*{0.2}}
\put(13,4){\circle*{0.2}}
\drawline(11,4)(11,2)(12,1.5)(13,2)(13,4)(11,4)
\drawline(11,2)(13,2)
\drawline(11,3)(13,3)
\dottedline{.08}(12.5,3.5)(11.5,3.5)
\drawline(11,3)(11.5,3.5)
\drawline(13,3)(12.5,3.5)
\put(11.5,3.5){\circle*{0.2}}
\put(12.5,3.5){\circle*{0.2}}
\drawline(11,3)(12.5,3.5)(13,4)
\drawline(11,4)(11.5,3.5)(13,3)
\put(11.5,.7){$F_5,F_6$}
\end{picture}
\end{center}

\begin{center}
\setlength{\unitlength}{1cm}
\thicklines
\begin{picture}(14,4.5)

\put(0,1){\circle*{0.2}}
\put(0,2.5){\circle*{0.2}}
\put(0,4){\circle*{0.2}}
\put(.75,.5){\circle*{0.2}}
\put(.75,3.25){\circle*{0.2}}
\put(1.5,1){\circle*{0.2}}
\put(1.5,2.5){\circle*{0.2}}
\put(1.5,4){\circle*{0.2}}
\put(3,3.25){\circle*{0.2}}
\drawline(0,1)(.75,.5)(1.5,1)(0,1)(0,4)(1.5,4)(1.5,1)
\drawline(0,2.5)(1.5,2.5)(0,4)
\dottedline{.08}(0,2.5)(.75,3.25)
\drawline(0.75,0.5)(3,3.25)(0,1)
\drawline(1.5,2.5)(3,3.25)(0,2.5)
\drawline(0.75,3.25)(3,3.25)(1.5,4)

\put(4.5,1){\circle*{0.2}}
\put(4.5,2.5){\circle*{0.2}}
\put(4.5,4){\circle*{0.2}}
\put(5.25,.5){\circle*{0.2}}
\put(5.25,3.25){\circle*{0.2}}
\put(6,1){\circle*{0.2}}
\put(6,2.5){\circle*{0.2}}
\put(6,4){\circle*{0.2}}
\drawline(4.5,1)(5.25,.5)(6,1)(4.5,1)(4.5,4)(6,4)(6,1)
\drawline(6,2.5)(4.5,2.5)(6,4)
\dottedline{.08}(6,2.5)(5.25,3.25)
\drawline(5.25,0.5)(3,3.25)(6,1)
\drawline(4.5,2.5)(3,3.25)(6,2.5)
\drawline(5.25,3.25)(3,3.25)(4.5,4)
\put(2.3,0){$F_7-F_9$}

\put(8,1){\circle*{0.2}}
\put(8,2.5){\circle*{0.2}}
\put(8,4){\circle*{0.2}}
\put(8.75,.5){\circle*{0.2}}
\put(8.75,3.25){\circle*{0.2}}
\put(9.5,1){\circle*{0.2}}
\put(9.5,2.5){\circle*{0.2}}
\put(9.5,4){\circle*{0.2}}
\put(11,3.25){\circle*{0.2}}
\drawline(8,1)(8.75,.5)(9.5,1)(8,1)(8,4)(9.5,4)(9.5,1)
\drawline(8,2.5)(9.5,2.5)(8,4)
\dottedline{.08}(8,2.5)(8.75,3.25)
\drawline(8.75,0.5)(11,3.25)(8,1)
\drawline(9.5,2.5)(11,3.25)(8,2.5)
\drawline(8.75,3.25)(11,3.25)(9.5,4)

\put(12.5,1){\circle*{0.2}}
\put(12.5,2.5){\circle*{0.2}}
\put(12.5,4){\circle*{0.2}}
\put(12,4.5){\circle*{0.2}}
\put(14.5,4.5){\circle*{0.2}}
\put(14,1){\circle*{0.2}}
\put(14,2.5){\circle*{0.2}}
\put(14,4){\circle*{0.2}}
\drawline(14,2.5)(14,1)(12.5,1)(12.5,4)(14,4)(14,2.5)(14.5,4.5)(12,4.5)(12.5,2.5)
\drawline(12,4.5)(14,4)(12.5,4)(14.5,4.5)
\drawline(12.5,1)(11,3.25)(14,1)
\drawline(12.5,4)(11,3.25)(14,4)
\drawline(12,4.5)(11,3.25)(14.5,4.5)
\put(10.3,0){$F_{10},F_{11}$}
\end{picture}
\end{center}
\caption{Graphs $F_1-F_{11}$. Dotted lines denote additional edges that should be added to produce the next graph in the corresponding list.}
\label{11graphs}
\end{figure}

The main result of this paper is  the complete characterization of $P_6$-free irredundance perfect graphs in terms of forbidden induced subgraphs 
given in Theorem \ref{t1}. 
All results discussed above follow  from Theorem \ref{t1}.

\begin{thm}
\label{t1}
A $P_6$-free graph $G$ is irredundance perfect if and only if $G$ does
not contain the graphs $F_1-F_{11}$ in Figure \ref{11graphs} as induced subgraphs.
\end{thm}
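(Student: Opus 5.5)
The proof has two directions. For necessity, it suffices to check that each $F_i$, $i=1,\dots,11$, satisfies $ir(F_i)<\gamma(F_i)$. Since irredundance perfection is hereditary, no irredundance perfect graph can then contain any $F_i$ as an induced subgraph. Each $F_i$ has at most ten vertices, so for each one I will exhibit an explicit maximal irredundant set $X$ and compute $\gamma(F_i)$ directly. Maximality of $X$ is verified with Proposition~1: set $U=V(F_i)-N[X]$ and check that every $v\in N[U]$ dominates $PN(x,X)$ for some $x\in X$. The natural choice follows the pattern of $G_4,G_5$. Take $X$ to consist of the two "middle" vertices of the ladder-like part, whose private neighbourhoods are single edges or vertices. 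Then $|X|=2$, while $\gamma(F_i)=3$.

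For sufficiency, I argue by minimal counterexample. Let $G$ be a $P_6$-free, $\{F_1,\dots,F_{11}\}$-free graph with $ir(G)<\gamma(G)$, chosen with $|V(G)|$ minimum. By minimality every proper induced subgraph is irredundance perfect. I will lean on the previously proved theorem of \cite{Vol5} (Conjecture~\ref{conj3}): since $G$ is $P_6$-free, $G$ must contain $G_4$ or $G_5$. Among all maximal irredundant sets $X$ with $|X|=ir(G)$, choose one that minimizes $|U|$, where $U=V(G)-N[X]$, and secondarily minimizes the total size of the private neighbourhoods. Standard arguments then give the following. $U\ne\es$, since otherwise $X$ is dominating. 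Some $x\in X$ has $x\notin PN(x,X)$ and $|PN(x,X)|\ge 2$. Every $u\in U$ dominates some $PN(x,X)$ entirely, by Proposition~1.

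Using $P_6$-freeness, I then analyze the structure around a vertex $u\in U$ and a neighbour $w\in N(u)\cap N[X]$. The set $PN(x)$ dominated by $u$, the vertex $x$, a vertex $y\in X$ adjacent to $w$ or to $PN(x)$, and a private neighbour of $y$ form paths of length about five. Forbidding $P_6$ forces chords, and the possible chord patterns produce exactly the configurations $G_4$, $G_5$ extended by $u$ and the extra private neighbours. In each case I aim to show either that one of $F_1$–$F_{11}$ appears as an induced subgraph, or that $X$ can be modified: swap $x$ for a vertex of $PN(x)$ or for $u$, obtaining a maximal irredundant set of the same size with smaller $U$, or a dominating set of size $|X|$. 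Either outcome contradicts the choices made.

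The case analysis is the main obstacle. It must be organized by the number of vertices of $X$ whose private neighbourhoods are dominated by $u$ and its neighbours, and by whether these private neighbourhoods are cliques. I expect the graphs $F_7$–$F_{11}$, which have a universal-type vertex attached to a $G_5$-like core, to arise when $u$ is adjacent to all of several private neighbourhoods. The smaller graphs $F_1$–$F_6$ should arise when $u$ sees a single $PN(x)$. I will first try to reduce to the situation $ir(G)=\gamma(G)-1$ with the contradiction localized on at most three vertices of $X$. This follows the approach used in \cite{Vol2,Vol5}, where the minimal counterexample is shown to have bounded local structure, so that the exhaustive check of chords becomes finite.
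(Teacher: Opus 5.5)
Your necessity argument has the right shape, but the sufficiency part has a genuine gap, and the forbidden graphs themselves show why. Your plan is to find the contradiction locally: around one vertex $u\in U$ and the private neighbourhoods it dominates, with at most three vertices of $X$ involved, either by exhibiting some $F_i$ or by swapping a vertex of $X$. But $F_7$--$F_{11}$ are not small. This also corrects your necessity paragraph, which assumes at most ten vertices, $|X|=2$ and $\gamma=3$ throughout; note too that $F_1\cong G_4$.

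Each of $F_7$--$F_{11}$ has $17$ vertices. It consists of two $8$-vertex blocks joined through a central vertex $w$. Each block is built on an adjacent pair $f_1f_2$ (resp.\ $f_1'f_2'$) of $X$-vertices with singleton private neighbourhoods. The relevant maximal irredundant set is $\{f_1,f_2,f_1',f_2'\}$, and $\gamma=5$. In $F_7$, for instance, the only $2$-set dominating a block is $\{f_2,u_1\}$, which misses $w$.

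The central vertex is adjacent to at least two vertices of $X$, so it lies in $W$, not in $U$. Each block on its own satisfies $ir=\gamma=2$. At least one block contains $G_5$, so knowing $G\supseteq G_5$ (which is all Conjecture~\ref{conj3} gives you) localizes nothing. The failure of $ir=\gamma$ comes from two blocks interacting through a $W$-vertex. No chord analysis near a single $u$ detects this, and no local swap produces a dominating set of size $|X|$ of the whole graph.

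The swap step is unsupported in any case. Replacing $x$ by $u$ or by a vertex of $PN(x)$ can destroy private neighbours of other vertices of $X$ and need not preserve maximality, so minimizing $|U|$ yields no contradiction. Likewise, $|PN(x,X)|\ge 2$ does not follow from any standard argument; minimality of the counterexample gives only that $\langle X\rangle$ has no isolated vertex (Lemma~\ref{l1}).

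What is missing is a global mechanism for building a dominating set of size at most $|X|$ and for locating where it fails. The paper removes $W$ and works with the components $L$ of $H-W$. $P_6$-freeness makes $\langle L_X\rangle$ connected (Lemma~\ref{l3}). Relative to a minimum set $F\subseteq X$ meeting every $X_u$, each component falls into one of the types A--H and contributes at most $|L_X|$ vertices to a set $I$.

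The crucial step is coordinating these choices across components so that $W$ gets dominated too. Components are paired through $W$-vertices (types B2--B6 and D1--D24). For the remaining type-D components one must choose simultaneously, in each, one of $\{y_1,y_2\}$, $\{u_1,f_2\}$, $\{u_2,f_1\}$, so that all of $W_1$ is dominated (Lemma~\ref{delta}). The graphs $F_1$--$F_6$ arise in Lemma~\ref{l6}, and $F_7$--$F_{11}$ arise exactly when this simultaneous choice is blocked (Claim~3 in the proof of Lemma~\ref{delta}). Finally, since $|I|\le|X|<\gamma(H)$, some $w^*\in W$ is not dominated by $I$, and Lemmas~\ref{l8}--\ref{l11} show that such a $w^*$ forces $P_6$ or some $F_i$. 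Without an analogue of this component-wise construction and its cross-component coordination, your outline does not reach the theorem.
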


\pf
The 'only if' part is trivial, since $ir(F_i)<\ga(F_i)$ for every
graph $F_i$ of Figure \ref{11graphs}.
To prove the 'if' part of the theorem,
let $H$ be a minimum counterexample, i.e., $H$ does not contain
$P_6$, $F_1-F_{11}$ as induced subgraphs, $ir(H)<\gamma(H)$
and $ir(H')=\gamma(H')$ for every proper induced subgraph $H'$ of $H$.
Let $X$ be a maximal irredundant
set of $H$ of cardinality $ir(H)$.
Denote
$$U=V(H)-N[X],$$
$$PN=\cup_{x\in X}PN(x,X),$$
$$W=V(H)-X-PN-U.$$
Observe that $|N(w)\cap X|\ge 2$ for any vertex $w\in W$.
Also, $U\not= \es$ because $X$ does not dominate $H$.

The proof of Theorem \ref{t1} is based on the next lemmas.

\begin{lem}
\label{l1} The graph $\<X$ has no vertex of degree 0.
\end{lem}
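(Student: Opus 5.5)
Suppose, for a contradiction, that some $x\in X$ is isolated in $\<X$. The plan is to delete the closed neighbourhood $N[x]$ and use the minimality of $H$. Set $H'=H-N[x]$, an induced subgraph of $H$ on fewer vertices (since $x\in N[x]$). By minimality, $ir(H')=\gamma(H')$. We will build a maximal irredundant set of $H'$ of size $|X|-1$. Then $\gamma(H')\le |X|-1$, and adding $x$ gives $\gamma(H)\le |X|=ir(H)$, contradicting $ir(H)<\gamma(H)$.

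\textbf{Candidate set.} Let $X'=X-\{x\}\subseteq V(H')$. This uses that $x$ has no neighbour in $X$.

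\textbf{Step 1: $X'$ is irredundant in $H'$.} Fix $y\in X'$ and take any $p\in PN(y,X)$. Then $p$ is not dominated by $X-\{y\}$, so in particular $p\notin N[x]$, hence $p\in V(H')$. Since the neighbourhoods of the vertices of $X'$ in $H'$ are their old ones restricted to $V(H')$, we get $p\in PN_{H'}(y,X')$. Thus every private neighbourhood is unchanged:
$$PN_{H'}(y,X')=PN(y,X)\neq\es .$$

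\textbf{Step 2: $X'$ is maximal irredundant in $H'$.} We apply Proposition 1 in $H'$. Put $U'=V(H')-N_{H'}[X']$.
\begin{itemize}
\item A vertex of $U'$ is not adjacent to $X'$ and not in $N[x]$, so $U'\subseteq U$. Conversely, every $u\in U$ is outside $N[x]$, so $U\subseteq U'$. Hence $U'=U$.
\item Take $v\in N_{H'}[U']\subseteq N[U]$. By Proposition 1 applied to $X$ in $H$, $v$ dominates $PN(z,X)$ for some $z\in X$.
\item If $z\neq x$, then $PN(z,X)=PN_{H'}(z,X')$ by Step 1, so $v$ dominates a private neighbourhood in $H'$, as required.
\end{itemize}

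\textbf{Main obstacle: the case $z=x$.} Because $x$ is isolated in $\<X$, we have $x\in PN(x,X)$. So if $v$ dominates $PN(x,X)$, then $v\in N[x]$, which means $v\notin V(H')$. This contradicts $v\in V(H')$, so the case $z=x$ cannot occur.

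Hence the condition of Proposition 1 holds for every $v\in N_{H'}[U']$, and $X'$ is maximal irredundant in $H'$. Therefore $\gamma(H')=ir(H')\le |X|-1$. If $D'$ is a minimum dominating set of $H'$, then $D'\cup\{x\}$ dominates $H$, since $x$ dominates $N[x]$ and $D'$ dominates the rest. So $\gamma(H)\le |X|=ir(H)$, the desired contradiction.

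One point to check is that $H'$ may be empty. In that case $X'$ must be empty too, so $X=\{x\}$ and $x$ dominates $H$. Then $\gamma(H)=1=ir(H)$, again a contradiction.
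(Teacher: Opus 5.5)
Your proof is correct and follows the paper's argument. It uses the same deletion $H'=H-N[x]$, the same set $X'=X-\{x\}$ with unchanged private neighbourhoods, and the same count $\gamma(H)\le\gamma(H')+1=ir(H')+1\le |X|$. The only difference is how maximality of $X'$ in $H'$ is certified. You use the criterion of Proposition~1, showing $U'=U$ and excluding $z=x$ because $x\in PN(x,X)$. The paper argues directly that an irredundant extension $X'\cup\{u\}$ in $H'$ would lift to an irredundant set $X\cup\{u\}$ in $H$, since $x\in PN(x,X\cup\{u\})$. Your separate treatment of an empty $H'$ is a harmless extra.
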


\pf Let $v$ be an isolated vertex in $\<X$. Denote $H'=H-N[v]$ and
$X'=X-\{v\}$. Clearly, for any vertex $x\in X'$,
$$
PN_{H'}(x,X')=PN_H(x,X)\not=\es.
$$
Therefore, $X'$ is an irredundant set in $H'$. Suppose that there
is a vertex $u\in V(H')-X'$ such that the set $X'\cup \{u\}$ is
irredundant in $H'$, i.e., $PN_{H'}(x,X'\cup \{u\})\not= \es$ for
any vertex $x\in X'\cup\{u\}$. It is not difficult to see that
$$
PN_H(x,X\cup \{u\})=PN_{H'}(x,X'\cup \{u\})\not=\es
$$
for any $x\in X'\cup \{u\}$. Moreover, $PN_H(v,X\cup \{u\})\not=
\es$, since $v\in PN_H(v,X\cup \{u\})$. We conclude that $X\cup
\{u\}$ is an irredundant  set in $H$, contrary to the fact that
$X$ is maximal irredundant. Consequently, $X'$ is a maximal
irredundant set in $H'$ and hence $ir(H')\le |X'|=ir(H)-1$. If $D$
is a minimum dominating set of $H'$, then $D\cup \{v\}$ is a
dominating set of $H$. Therefore, $\gamma(H)\le
|D|+1=\gamma(H')+1$. We obtain
$$ir(H')\le ir(H)-1 < \gamma(H)-1\le \gamma(H').$$
Thus, $ir(H')<\gamma(H')$, contrary to the minimality of the graph
$H$. \qed

By Proposition 1, for every vertex $u\in U$ there is a vertex $x\in X$
such that $PN(x,X)\subseteq N(u)$.
Define
$$X_u=\{x\in X: PN(x,X)\subseteq N(u)\}.$$
Let $F$ denote a subset of $X$ of smallest cardinality such that
$X_u\cap F\not= \es$ for each $u\in U$.

\begin{lem}
\label{l2}
If $f_1,f_2\in F$, then there exist
vertices $y_i\in PN(f_i)$ and $u_i\in U$ $(i=1,2)$ such that
$u_1\ad y_1, u_2\ad y_2$ and $u_1\nad y_2, u_2\nad y_1$.
Moreover, for $i=1,2$, $u_i$ dominates $PN(f_i)$ and  $u_i$
does not dominate $PN(f)$ for each $f\in F-f_i$.
\end{lem}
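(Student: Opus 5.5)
The plan is to use the minimality of $F$ twice, once for each of $f_1$ and $f_2$; we assume $f_1\neq f_2$. First, since $F$ is a smallest set meeting every $X_u$, the set $F-\{f_1\}$ fails to meet some $X_{u_1}$. So there is $u_1\in U$ with $X_{u_1}\cap F=\{f_1\}$. In other words, $f_1\in X_{u_1}$, which means $PN(f_1)\subseteq N(u_1)$ and so $u_1$ dominates $PN(f_1)$. Also $f\notin X_{u_1}$ for every $f\in F-\{f_1\}$, so $u_1$ does not dominate $PN(f)$ for any such $f$. The same argument applied to $f_2$ gives $u_2\in U$ with $X_{u_2}\cap F=\{f_2\}$. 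This already proves the ``moreover'' part. It also shows $u_1\neq u_2$, since $X_{u_1}\cap F\neq X_{u_2}\cap F$.

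Next I will choose the vertices $y_i$. Because $f_2\in F-\{f_1\}$, the vertex $u_1$ does not dominate $PN(f_2)$. Since $u_1\in U$ and $U\cap N[X]=\es$, the vertex $u_1$ is not in $PN(f_2)$ and cannot dominate anything in it by equality, so "not dominating" means there is a vertex $y_2\in PN(f_2)$ with $u_1\nad y_2$. By symmetry there is $y_1\in PN(f_1)$ with $u_2\nad y_1$. Finally, $u_1$ dominates $PN(f_1)$, so $u_1\ad y_1$, and similarly $u_2\ad y_2$. Here we need $y_1\neq u_1$, which holds because $PN(f_1)\subseteq N[X]$ while $u_1\notin N[X]$.

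One more point must be checked: $PN(f_i)$ is nonempty, because $X$ is irredundant. This guarantees that ``$u_1$ does not dominate $PN(f_2)$'' really produces a witness $y_2$.

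The only delicate step is extracting $u_i$ from the minimality of $F$. One might worry that $F-\{f_i\}$ could still be a valid hitting set through some other element. But minimal cardinality rules this out directly, so I expect no real obstacle. Everything else follows from the definitions of $X_u$ and $U$.
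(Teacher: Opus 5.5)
Your proof is correct and follows essentially the same route as the paper. The minimality of $F$ gives $u_i\in U$ with $X_{u_i}\cap F=\{f_i\}$, and $y_1$ (resp.\ $y_2$) is taken as a vertex witnessing that $u_2$ (resp.\ $u_1$) fails to dominate $PN(f_1)$ (resp.\ $PN(f_2)$); your extra checks ($f_1\neq f_2$, $u_i\notin N[X]$) only make explicit what the paper leaves implicit.
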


\pf
The minimality of $F$ implies that for $f_1\in F$ there is a $u_1\in U$
such that $u_1$ dominates $PN(f_1)$ and $X_u\cap (F-f_1)=\es$, i.e.,
$u_1$ does not dominate $PN(f)$ for any $f\in F-f_1$.
Analogously, for $f_2\in F$ there is a $u_2\in U$ dominating $PN(f_2)$
and not dominating $PN(f)$ for any $f\in F-f_2$.
Choosing $y_1\in PN(f_1)$ so that $y_1\nad u_2$ and choosing
$y_2\in PN(f_2)$ so that $y_2\nad u_1$, we obtain the desired result.
\qed

Put $Z=X-F$, thus
$X=F\cup Z.$
If $L$ is an arbitrary maximal connected component of the graph $H-W$,
then define
$$
L_X=V(L)\cap X, \quad
L_F=L_X\cap F, \quad
L_Z=L_X\cap Z,
$$
$$
L_{PN}=V(L)\cap PN, \qquad
L_U=V(L)\cap U.
$$
Observe that 
$$
V(L)=L_X\cup L_{PN}\cup L_U.
$$
By Proposition 1, any vertex of $L_U$ is adjacent to some vertex of $L_{PN}$.
Also, any vertex of $L_{PN}$ is adjacent to some vertex of $L_X$.
Therefore, $L_X\not=\es$.

\begin{lem}
\label{l3}
The graph $\lz  L_X\rz$ is connected.
\end{lem}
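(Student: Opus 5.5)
We argue by contradiction, using the $P_6$-freeness of $H$ together with the private-neighbour structure. Suppose $\lz L_X\rz$ is disconnected. Since $L$ is connected and $V(L)=L_X\cup L_{PN}\cup L_U$, there is a shortest path in $L$ between two distinct components $A$ and $B$ of $\lz L_X\rz$. Its interior avoids $L_X$, so it lies in $L_{PN}\cup L_U$. We first analyse the possible shapes of such a path.

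A vertex $y\in PN(x,X)$ with $y\notin X$ is adjacent to exactly one vertex of $X$, namely $x$. A vertex of $U$ has no neighbours in $X$. Hence a path $a, p_1,\dots,p_k, b$ with $a\in A$, $b\in B$ and $p_i\notin X$ has $p_1\in PN(a)$ and $p_k\in PN(b)$. Since $p_1$ is adjacent to exactly one vertex of $X$, we cannot have $p_1=p_k$, so $k\ge 2$. If $k=2$, then $p_1\in PN(a)$ and $p_2\in PN(b)$ are adjacent. If $k\ge 3$, the interior vertices lie in $PN\cup U$. Applying minimality of the path, we may take it to be induced. We would also like to use that $p_i\in PN(x_i)$ has no neighbour in $X$ other than $x_i$.

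Next we extend this path to an induced path on six vertices, which will be the main step. By Lemma~\ref{l1}, $a$ has a neighbour $a'\in X$, and $a'\in A$ since $A$ is a component of $\lz L_X\rz$. Likewise $b$ has a neighbour $b'\in B$. We then check that $a',a,p_1,\dots,p_k,b,b'$ is an induced path:
\begin{itemize}
\item $a'$ is not adjacent to $b$ or $b'$, since they lie in different components;
\item $a'$ is not adjacent to $p_1$, because $p_1$ is a private neighbour of $a$;
\item $a'$ is not adjacent to $p_i$ for $i\ge 2$, since each such $p_i$ is either in $U$ (no neighbours in $X$) or a private neighbour of some vertex other than $a$ that is not adjacent to $a$ by minimality of the path; in the latter case its unique $X$-neighbour could be $a'$, which is handled below;
\item the symmetric statements hold for $b'$.
\end{itemize}
The path has at least $6$ vertices since $k\ge2$, contradicting $P_6$-freeness.

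The main obstacle is to guarantee that intermediate vertices $p_i$ ($2\le i\le k-1$), when they lie in $PN$, are private neighbours of vertices not among $a'$ and $b'$. The fix is to choose the path of minimum length over all pairs of distinct components. Then an interior $p_i\in PN(x)$ with $x\in L_X$ lies in the component of $x$, which is $A$, $B$ or a third component. If it is a third component, a shorter path to it exists. If it is $A$, replace the start of the path. Consequently the minimum path has $k=2$ exactly, namely $a,p_1,p_2,b$ with $p_1\in PN(a)$, $p_2\in PN(b)$ and $p_1\ad p_2$. The adjacencies of $a'$ and $b'$ are then all forced as above, giving the induced $P_6$ $a',a,p_1,p_2,b,b'$. (A path through $U$ always contains two $PN$ vertices and is at least as long, so minimality yields this form directly.)
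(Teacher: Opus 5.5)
Your strategy is the paper's: take a shortest path in $L$ between two components of $\langle L_X\rangle$, extend it at both ends by $X$-neighbours $a',b'$ from Lemma~\ref{l1}, and obtain an induced path on at least six vertices. However, one step is wrong. The conclusion ``Consequently the minimum path has $k=2$ exactly'' does not follow from your argument.

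Your minimality argument is correct as far as it goes. If an interior $p_i$ were in $PN(x)$, then $x$ lying in $A$, $B$ or a third component would give a shorter path. But this shows only that $p_2,\dots,p_{k-1}$ all lie in $U$; it says nothing about how many of them there are. A shortest path such as $a,p_1,u,p_3,b$ with $u\in U$ and $p_1\not\sim p_3$ is not excluded. Your parenthetical remark that a path through $U$ ``is at least as long'' is a non sequitur: minimality only compares against paths that exist, and a path $a,p_1,p_2,b$ need not exist. So the final $P_6$ $a',a,p_1,p_2,b,b'$ is only justified when $k=2$.

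The repair uses what you already proved, so simply drop the $k=2$ claim. Since $p_1\in PN(a)$, $p_k\in PN(b)$ and the interior vertices lie in $U$, the only $X$-neighbours of $p_1,\dots,p_k$ are $a$ and $b$. Hence $a'$ and $b'$ are nonadjacent to every $p_i$. Together with the inducedness of a shortest path and the absence of edges between $A$ and $B$, this makes $a',a,p_1,\dots,p_k,b,b'$ an induced path on $k+4\ge 6$ vertices. Its first six vertices form an induced $P_6$.

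The paper handles the adjacencies of $a'$ more cheaply. If $a'\sim p_i$ for some $i\ge 2$, then $a',p_i,\dots,p_k,b$ would be a shorter $A$--$B$ path, and symmetrically for $b'$. This needs neither the global minimum over all pairs of components nor any analysis of where the interior vertices lie.

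Relatedly, your opening assertion that the interior of the path avoids $L_X$ holds only for that global minimum. It can fail for a shortest path between a fixed pair $A,B$, which may pass through a third component.
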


\pf Suppose to the contrary that $\lz  L_X\rz$ is not connected,
and let $L_1$ and $L_2$ be two connected components of $\lz
L_X\rz$. Consider the shortest path $P$ between $L_1$ and $L_2$ in
$L$. The path $P$ has the form $P=(x_1,v_1,...,v_k,x_2)$, where
$x_1\in L_1$ and $x_2\in L_2$. Observe that $v_1\not\in L_1$,
$v_1\in PN(x_1,X)$ and $v_k\not\in L_2$, $v_k\in PN(x_2,X)$, and
hence $v_1\not=v_k$ and $k\ge 2$. By Lemma \ref{l1}, $x_1\ad x'_1$
and $x_2\ad x'_2$, where $x'_1,x'_2\in X$. Clearly, $x'_1,x'_2\in
V(L)$ and hence $x'_1\in L_1$ and $x'_2\in L_2$. Note that
$x'_1\nad v_1$, since $v_1\in PN(x_1)$, and also $x'_1\nad v_i$
for $i>1$, for otherwise the path $P$ is not shortest.
Analogously, $x'_2\nad v_i$ for any $i$, $1\le i\le k$. Now, the
path $(x'_1,x_1,v_1,...,v_k,x_2,x'_2)$ contains the
path $P_6$ as an induced subgraph, a contradiction. Thus, $\lz
L_X\rz$ is a connected graph. \qed

Now we show that the component $L$ may have only one of basic
types defined below. Firstly, suppose that the set $L_F$ is
independent. If $L_Z=\es$, then we obtain a contradiction 
with Lemma
\ref{l1} or \ref{l3}. Hence $L_Z\not=\es$. Let there exist a
vertex $z\in L_Z$ such that $|N(z)\cap L_F|\ge 2$, in this case
$L$ is called a {\it component of type A.} Let $f_1,f_2\in
N(z)\cap L_F$ and let $y_1,y_2,u_1,u_2$ be chosen as in Lemma
\ref{l2}. We have $y_1\ad y_2$, for otherwise $\lz
u_1,y_1,f_1,z,f_2,y_2\rz\cong P_6$. Let us show that $v\ad f_1$
for an arbitrary vertex $v\in L_X-\{z,f_1,f_2\}$. Suppose that
$v\nad f_1$ and consider the shortest $(f_1-v)$-path $P$ in $\lz
L_X\rz$. Such a path must exist by Lemma \ref{l3}. The distance
between $f_1$ and $f_2$ in $\lz  L_X\rz$ is two. So, if $f_2\in
P$, then $P$ has the form $P=(f_1,z',f_2,v_1,...,v_k=v),$ where
$k\ge 1$. We obtain $\lz u_1,y_1,f_1,z',f_2,v_1\rz\cong P_6$.
Hence $f_2\not\in P$ and $P$ has the form $P=(f_1,v_1,...,v_k=v),$
where $k\ge 2$. We have $\lz u_2,y_2,y_1,f_1,v_1,v_2\rz\cong P_6$.
Thus, we have proved that $v\ad f_1$. We can show analogously that
$v\ad f_2$. Since $L_F$ is independent, we obtain $v\in L_Z$, and
hence $L_F=\{f_1,f_2\}$. Thus, $z\ad \{f_1,f_2\}$ for any vertex
$z\in L_Z$. We summarize all the facts about the component $L$ of
type A.

{\bf Type A.} $L_F=\{f_1,f_2\}$, $f_1\nad f_2$, $L_Z\not=\es$ and
$z\ad \{f_1,f_2\}$ for any vertex $z\in L_Z$.
Further, let $y_i,u_i$ $(i=1,2)$ be chosen by Lemma \ref{l2},
i.e., $u_i\ad y_i$, $y_i\ad f_i$ $(i=1,2)$ and $u_1\nad y_2$, $u_2\nad y_1$.
Also, $y_1\ad y_2$.
Put $\bh N = N_1\cup N_2\cup N_{1,2}$, where
\begin{eqnarray}
N_1 &=& \{u\in L_U: u\ad y_1, u\nad y_2\}, \nonumber \\
N_2 &=& \{u\in L_U: u\nad y_1, u\ad y_2\}, \\
N_{1,2} &=& \{u\in L_U: u\ad y_1, u\ad y_2\}, \nonumber
\end{eqnarray}

\begin{lem}
\label{l4}
Let $L$ be a component of type A. Then

(a) The set $\{y_1,y_2\}$ dominates $PN(f_1)\cup PN(f_2)\cup L_U$;

(b) $N_1\nad N_2$, $N_1\nad N_{1,2}$, and $N_2\nad N_{1,2}$;

(c) $L_U=N_1\cup N_2\cup N_{1,2}.$
\end{lem}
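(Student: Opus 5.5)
The plan is to prove (a) first, derive (c) from it almost immediately, and save (b) for last, since it is the only part not settled by short induced paths. Throughout we use the type A data $f_1\nad f_2$, $z\ad\{f_1,f_2\}$, $y_i\in PN(f_i)$, $y_1\ad y_2$, $u_i\ad y_i$, $u_1\nad y_2$, $u_2\nad y_1$. Two background facts will be used constantly. First, vertices of $U$ are nonadjacent to $X$. Second, a vertex of $PN(x,X)$ is adjacent to no vertex of $X-\{x\}$. The main tool is to exhibit an induced $P_6$ or one of $F_1$--$F_{11}$ whenever a claimed adjacency fails.

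For (a), take $v\in PN(f_1)$ with $v\nad y_1$ and $v\nad y_2$; the case of $PN(f_2)$ is symmetric. Then $v\nad z$ and $v\nad f_2$. If $v\nad u_2$, the sequence $(v,f_1,z,f_2,y_2,u_2)$ is an induced $P_6$; the only adjacencies to check are those involving $u_2$, which misses $X$ and $v$. If $v\ad u_2$, we still have $u_1\ad v$, because $u_1$ dominates $PN(f_1)$. The vertices $u_1,u_2,v,y_1,y_2,f_1,f_2,z$ then form a small configuration in which $y_1\ad y_2$, $u_i\ad y_i$ and $v$ is a common neighbour of $u_1,u_2,f_1$. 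Depending on whether $u_1\ad u_2$, I expect this to produce an induced $P_6$, for instance $(y_2,u_2,v,\dots)$ extended through $u_1$ or $f_1$, or one of $F_1$, $F_2$. This is a finite check.

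Next take $u\in L_U$. By Proposition 1, $u$ dominates $PN(x)$ for some $x\in X$. Since $u$ has a neighbour in $L_{PN}$ and $L$ is a component of $H-W$, this $x$ lies in $L_X=\{f_1,f_2\}\cup L_Z$.
\begin{itemize}
\item If $x=f_i$, then $u\ad y_i$.
\item If $x=z'\in L_Z$, pick $p\in PN(z')$ with $p\ad u$. If $u\nad y_1$ and $u\nad y_2$, then $(u,p,z',f_1,y_1,u_1)$ is an induced path, provided $u\nad u_1$ and $p$ avoids $y_1,u_1$. If $u\ad u_1$, use instead the path through $f_2,y_2,u_2$, or combine both sides.
\end{itemize}
This yields $u\ad y_1$ or $u\ad y_2$ for every $u\in L_U$. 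That is exactly the $L_U$ part of (a), and (c) follows at once from the definition of $N_1,N_2,N_{1,2}$.

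For (b), suppose an edge joins two different classes, say $u\in N_1$ and $w\in N_2\cup N_{1,2}$. The subgraph on $\{f_1,f_2,z,y_1,y_2,u,w\}$, possibly together with $u_1,u_2$, then has a rigid structure. We have the $4$-cycle $(f_1,z,f_2,\cdot)$ closed through $y_1y_2$, and a triangle or $4$-cycle on $y_1,y_2,u,w$. The plan is to compare this with Figure~\ref{11graphs}. Graphs $F_3$--$F_6$ have exactly two nonadjacent vertices with a common neighbour, adjacent private neighbours, and two extra vertices attached in various patterns, so an edge between the classes should give one of them, or an induced $P_6$ such as $(u_1,y_1,u,w,y_2,f_2)$ when the relevant non-adjacencies hold.

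I expect (b) to be the main obstacle, because many subcases (adjacency of $u,w$ to $u_1,u_2$, and whether $w\in N_2$ or $N_{1,2}$) must each be matched to a specific forbidden graph. The first thing to try is to replace $u_1,u_2$ by $u,w$ themselves whenever possible, which removes most of the subcases before consulting the figure.
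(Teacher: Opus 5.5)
\textbf{Gap in the $L_U$ part of (a).} This is where your plan breaks. From Proposition~1 you take only \emph{some} $x$ with $PN(x)\subseteq N(u)$, and then try to exclude $x=z'\in L_Z$ with induced paths. Your path $(u,p,z',f_1,y_1,u_1)$ is left conditional on $u\nad u_1$, $p\nad y_1$ and $p\nad u_1$. In fact this case cannot be closed by forbidden subgraphs at all. Take the vertices $f_1,z,f_2,y_1,y_2,u_1,u_2,p,u$ with exactly the edges $zf_1,zf_2,f_1y_1,f_2y_2,y_1y_2,u_1y_1,u_2y_2,zp,py_1,py_2,pu$. Then:
\begin{itemize}
\item $X=\{f_1,z,f_2\}$ is maximal irredundant, with $PN(f_i)=\{y_i\}$ and $PN(z)=\{p\}$;
\item every type A fact you list holds;
\item the graph has no induced $P_6$ and none of $F_1$--$F_{11}$. (The only triangle, $py_1y_2$, does not extend to an $F_1$, and $F_2$--$F_{11}$ are too large or too dense.)
\end{itemize}
Yet $u\nad\{y_1,y_2\}$, and your path has the chord $py_1$. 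What excludes this configuration is the defining property of $F$, which you never use: $X_u\cap F\neq\es$ for every $u\in U$. In the example $X_u=\{z\}$, so $F$ would have to contain $z$, and the component is not of type A. The paper argues as follows. By your own observation $X_u\subseteq L_X$, and $L_X\cap F=\{f_1,f_2\}$. Hence some $f_i$ lies in $X_u$, so $u$ dominates $PN(f_i)\ni y_i$. Equivalently, $u\nad\{y_1,y_2\}$ would force $X_u\subseteq L_Z\subseteq Z$, contradicting $X_u\cap F\neq\es$.

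For the $PN(f_1)$ part, your case $v\nad u_2$ is correct, but the case split is unnecessary. Since $u_1\ad v$, the sequence $(v,u_1,y_1,y_2,f_2,z)$ is an induced $P_6$ whatever $u_2$ does.

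\textbf{Gap in (b).} You give no actual argument. The only concrete path you propose, $(u_1,y_1,u,w,y_2,f_2)$, always contains the chord $y_1y_2$, so it is never induced. No appeal to $F_3$--$F_6$ is needed: route the path through $z$ and use $f_1\nad f_2$.
\begin{itemize}
\item If $v\in N_1$ is adjacent to $u\in N_2\cup N_{1,2}$, then $(v,u,y_2,f_2,z,f_1)$ is an induced $P_6$; the decisive nonadjacency is $v\nad y_2$.
\item If $v\in N_2$ is adjacent to $u\in N_{1,2}$, then $(v,u,y_1,f_1,z,f_2)$ is an induced $P_6$.
\end{itemize}
So (b) is the easy part, and the real content of the lemma is the use of the hitting property of $F$ in (a). Part (c) then follows from (a), as you say.
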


\pf Suppose that there is a vertex $y$, say $y\in PN(f_1)$, such
that $y\nad \{y_1,y_2\}$. By Lemma \ref{l2}, $y\ad u_1$. We obtain
$\lz y,u_1,y_1,y_2,f_2,z\rz\cong P_6$, where $z\in L_Z$, a
contradiction. If there is a vertex $u\in L_U$ such that $u\nad
\{y_1,y_2\}$, then $f_1,f_2\not\in X_u$ and hence
$$X_u\subseteq L_Z\subseteq Z.$$
Thus, $X_u\cap F=\es$, contrary to the choice of
$F$. To prove the second statement of the lemma, let $v\in N_1$ be
adjacent to $u\in N_2\cup N_{1,2}$. We have $\lz
v,u,y_2,f_2,z,f_1\rz\cong P_6$, where $z\in L_Z$, a contradiction.
If $v\in N_2$ is adjacent to $u\in N_{1,2}$, then $\lz
v,u,y_1,f_1,z,f_1\rz\cong P_6$, a contradiction. Since
$\{y_1,y_2\}$ dominates $L_U$, we obtain $L_U=N_1\cup N_2\cup
N_{1,2}$. \qed

Now, suppose that there is a $z\in L_Z$ such that $N(z)\cap
L_F=\{f\}$. In this case, we say that the {\it component $L$ has
type B.} Assume that $|L_F|>1$, i.e., there is an $f'\in L_F$,
$f'\not=f$. Further, consider the shortest $(f-f')$-path $P$ in
$\lz  L_X\rz$. Such a path must exist by Lemma \ref{l3}. Recall
that $f\nad f'$, since $L_F$ is independent. If $P=(f,g,f')$, then
$g$ is adjacent to $f,f'\in L_F$ and $g\in L_Z$ because $L_F$ is
independent. Therefore, $|N(g)\cap L_F|\ge 2$, i.e., $L$ must be
the component of type A, a contradiction. Thus,
$P=(f,z_1,z_2,...,z_k=f')$, where $k\ge 3$. Since $f\in F$, there
is a $u\in U$ such that $u\ad y\in PN(f)$. We obtain $\lz
u,y,f,z_1,z_2,z_3\rz\cong P_6$. Thus, $|L_F|=1$. Summarize the
above facts.

{\bf Type B.} $L_F=\{f\}$ and $L_Z\not=\es$.
Let $y\in PN(f)$.
The vertex $y$ dominates $L_U$.
If, to the contrary, there is a $u\in L_U$ such that $u\nad y$,
then $f\not\in X_u$ and hence $X_u\subseteq L_Z\subseteq Z$,
i.e., $X_u\cap F=\es$, a contradiction.

Consider now the case $N(z)\cap L_F=\es$ for any $z\in L_Z$.
By Lemma \ref{l3}, $\lz L_X\rz$ is connected, and so $L_F=\es$.
The component of this type is called a {\it component of type C.}

{\bf Type C.} $L_F=\es$ and $L_Z=L_X\not=\es$.
Also, $L_U=\es$, for otherwise $\es\not=X_u\subseteq  L_Z\subseteq Z$ for
$u\in L_U$, a contradiction.

Now, let $L_F$ be a dependent set, i.e.,
there are $f_1,f_2\in L_F$ with $f_1\ad f_2$.
If $|L_X|=2$, then we say that $L$ {\it has type D.}

{\bf Type D.} $L_F=\{f_1,f_2\}$, $f_1\ad f_2$, and $L_Z=\es$.
Let $y_i,u_i$ $(i=1,2)$ be chosen by Lemma \ref{l2}, i.e.,
$u_i\ad y_i$, $y_i\ad f_i$ $(i=1,2)$ and $u_1\nad y_2$, $u_2\nad y_1$.
Moreover, $PN(f_i)\subseteq N(u_i)$ for $i=1,2$.
The set $\{y_1,y_2\}$ dominates $L$.
Indeed, any vertex $v$ of $PN(f_1)\cup PN(f_2)\cup L_U$
belongs to $N[L_U]$ and, by Proposition 1, $v$
must dominate $PN(f_1)$ or $PN(f_2)$, i.e., $v$ is adjacent to $y_1$ or
$y_2$.

Consider the case $|L_X|\ge 3$ and suppose firstly that
$L_X$ has no $K_3=\lz f',f'',x\rz$ where $f',f''\in L_F$ and $x\in L_X$.
For this case, the components of types E, F, and G will be defined.

\begin{lem}
\label{l5}
Let $|L_X|\ge 3$, $f_1,f_2\in F$, $f_1\ad f_2$,
let $L_X$ have no $K_3=\lz f',f'',x\rz$ where $f',f''\in L_F$ and $x\in L_X$,
and let $y_1,y_2,u_1,u_2$ be chosen by Lemma \ref{l2}.
Then

(a) $y_1\ad y_2$ and $u_1\nad u_2$;

(b) $\lz L_X\rz$ consists of two disjoint graphs $R_1$ and $R_2$
connected by the bridge $f_1f_2$;

(c) $f_1$ dominates $R_1$, and $f_2$ dominates $R_2$;

(d) $\{u_1,y_1\}$ dominates $\cup_{x\in V(R_2)-f_2}PN(x)$,
$\{u_2,y_2\}$ dominates $\cup_{x\in V(R_1)-f_1}PN(x)$.
\end{lem}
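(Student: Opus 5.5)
Throughout, the tool is the one used in Lemmas~\ref{l3} and~\ref{l4}: assume a claim fails and exhibit an induced $P_6$ built from the configuration $u_1y_1f_1f_2y_2u_2$. Here $y_i\in PN(f_i)$, so $y_i\nad X-\{f_i\}$, and $u_i\in U$, so $u_i\nad X$. Lemma~\ref{l2} gives $u_1\nad y_2$ and $u_2\nad y_1$.

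\emph{Part (a).} If $y_1\nad y_2$ and $u_1\nad u_2$, then $\lz u_1,y_1,f_1,f_2,y_2,u_2\rz\cong P_6$. So at least one of $y_1\ad y_2$, $u_1\ad u_2$ holds. Next I would exclude $u_1\ad u_2$ by bringing in a third vertex $x\in L_X-\{f_1,f_2\}$, which exists since $|L_X|\ge 3$. By connectivity of $\lz L_X\rz$ (Lemma~\ref{l3}) we may take $x$ adjacent to, say, $f_1$; by the no-$K_3$ hypothesis (which covers any $x\in L_X$), $x\nad f_2$. If $u_1\ad u_2$, then $\lz x,f_1,y_1,u_1,u_2,y_2\rz$ is an induced path. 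Indeed, $x\nad y_1,y_2,u_1,u_2$ by privacy and since $U\nad X$; $f_1\nad u_i$ and $f_1\nad y_2$; and $y_1\nad u_2$, $y_2\nad u_1$. The only possible chord is $y_1y_2$, so when $y_1\nad y_2$ this is a $P_6$. When $y_1\ad y_2$ as well, I would instead replace $y_2$ by a vertex of $PN(f_2)$, or use $u_2$ together with the private neighbours of $x$, to find another induced $P_6$, or else an induced copy of one of $F_1$--$F_3$, which are built precisely on this bipartite-like ladder. I expect this subcase ($y_1\ad y_2$ and $u_1\ad u_2$ at once) to be the main obstacle, and it will likely need the forbidden graphs $F_i$ and not only $P_6$. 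Once $u_1\ad u_2$ is excluded, $y_1\ad y_2$ follows from the first $P_6$.

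\emph{Parts (b) and (c).} By the no-$K_3$ condition no vertex of $L_X$ is adjacent to both $f_1$ and $f_2$. Suppose a vertex $v\in L_X$ is at distance $\ge 2$ from $\{f_1,f_2\}$ in $\lz L_X\rz$, and take a shortest path $f_1,v_1,v_2,\dots$ to it, say. Then $\lz u_2,y_2,y_1,f_1,v_1,v_2\rz\cong P_6$, using (a) and the facts that $y_1,y_2$ have no neighbours in $X$ other than $f_1,f_2$ respectively; this is exactly the argument already used for type A. Hence every vertex of $L_X-\{f_1,f_2\}$ is adjacent to exactly one of $f_1,f_2$. Let $R_i$ be $f_i$ together with its neighbours in $L_X-\{f_{3-i}\}$; then (c) holds. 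For (b), it remains to show that no edge joins $R_1-f_1$ and $R_2-f_2$. If $a\in R_1-f_1$ and $b\in R_2-f_2$ with $a\ad b$, then $\lz a,f_1,f_2,b\rz$ is a $C_4$. I would extend it by $y_1,u_1$ to obtain $P_6$ or some $F_i$: for example, $\lz u_1,y_1,f_1,a,b,\dots\rz$ with a private neighbour of $b$ gives an induced $P_6$ unless extra adjacencies appear, and those adjacencies are again controlled by (a) and Lemma~\ref{l2}.

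\emph{Part (d).} Take $x\in R_2-f_2$ and $y\in PN(x)$, and suppose $y\nad u_1,y_1$. Then consider $\lz y,x,f_2,f_1,y_1,u_1\rz$. Here $y\nad f_1,f_2$ by privacy, $x\nad f_1$ and $x\nad y_1,u_1$ by (b) and (c), and $f_2\nad y_1,u_1$. So this is an induced $P_6$ unless $y\ad y_1$, which is excluded by assumption. Hence $\{u_1,y_1\}$ dominates $PN(x)$, and (d) follows by symmetry.
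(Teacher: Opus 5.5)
Your proposal has a genuine gap in (a): you never exclude $u_1\ad u_2$ in the case $y_1\ad y_2$. Your expectation that the graphs $F_i$ are needed there is mistaken. Your two paths already give $y_1\ad y_2$ unconditionally: if $u_1\nad u_2$ use $\lz u_1,y_1,f_1,f_2,y_2,u_2\rz$, and if $u_1\ad u_2$ use $\lz x,f_1,y_1,u_1,u_2,y_2\rz$. So what is actually missing is $u_1\nad u_2$. The trouble is that your second path contains both $y_1$ and $y_2$, so the chord $y_1y_2$ can destroy it. Route through the edge $f_1f_2$ instead. With $x\ad f_1$, $x\nad f_2$ and $u_1\ad u_2$, the subgraph $\lz x,f_1,f_2,y_2,u_2,u_1\rz$ is an induced $P_6$, because:
$x\nad f_2$ by the no-$K_3$ hypothesis;
$x,f_1\nad y_2$ by privacy;
$X\nad U$;
$u_1\nad y_2$ by Lemma~\ref{l2}.
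The vertex $y_1$ does not occur, so the edge $y_1y_2$ is irrelevant. This is the paper's argument, and it needs only $P_6$-freeness.

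There is a second, smaller gap in (b). With your definition of $R_i$, part (c) is trivial, and the whole content of (b) is that no edge $ab$ joins $a\in R_1-f_1$ and $b\in R_2-f_2$. Your sketch does not prove this. Moreover, the path you suggest, $\lz u_1,y_1,f_1,a,b,p\rz$ with $p\in PN(b)$, can never be induced. Your own argument for (d) uses only $b\ad f_2$ and $b\nad f_1$, so it shows that every $p\in PN(b)$ is adjacent to $u_1$ or $y_1$. The right move is the pattern you already used for (c): $\lz u_2,y_2,y_1,f_1,a,b\rz$ is an induced $P_6$. Here $y_1\ad y_2$, $b\nad f_1$ by the no-$K_3$ hypothesis, $y_2\nad\{f_1,a,b\}$ and $y_1\nad\{a,b\}$ by privacy, and $u_2\nad X$ and $u_2\nad y_1$. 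Equivalently, as in the paper, let $R_1$ be the component of $\lz L_X\rz-f_2$ containing $f_1$. Your distance argument run inside $R_1$ then forces $b\ad f_1$, contradicting the no-$K_3$ hypothesis. With these two repairs, your arguments for (c) and (d) coincide with the paper's.
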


\pf
By Lemma \ref{l3}, $\lz L_X\rz$ is a connected graph.
Since $|L_X|\ge 3$, there is an $x\in L_X-\{f_1,f_2\}$ adjacent to $f_1$ or $f_2$,
say  $x\ad f_1$ and $x\nad f_2$.
If $u_1\ad u_2$, then $\lz x,f_1,f_2,y_2,u_2,u_1\rz\cong P_6$,
a contradiction.
Consequently, $u_1\nad u_2$.
We have $y_1\ad y_2$, for otherwise $\lz u_1,y_1,f_1,f_2,$
$y_2,u_2\rz\cong P_6$.
Let $R_1$ be a connected component of the graph $R-f_2$ containing $f_1$.
If $f_1$ does not dominate $R_1$, then there are $x_1,x_2$ from $R_1$
such that $\lz u_2,y_2,y_1,f_1,x_1,x_2\rz\cong P_6$.
Therefore, $f_1$ dominates $R_1$.
In the same way, $f_2$ dominates the connected component $R_2$ of $R-f_1$
containing $f_2$.
Thus, $f_1f_2$ is a bridge in $\lz L_X\rz$, since no vertex of $\lz L_X\rz$
can be adjacent to both $f_1$ and $f_2$.
If $\{u_2,y_2\}$ does not dominate $PN(y)$ for some $y\in V(R_1)-f_1$,
then there is a $g\in PN(X)$ such that $g\nad \{u_2,y_2\}$ and
$\lz u_2,y_2,f_2,f_1,y,g\rz\cong P_6$, a contradiction.
In the same way, we can show that
$\{u_1,y_1\}$ dominates $PN(y)$ for any $y\in V(R_2)-f_2$.
\qed

If $|R_1|\ge 2$ and $|R_2|\ge 2$, then we obtain a {\it component
of type E.} Suppose that there is an $f\in L_F-\{f_1,f_2\}$, say
$f\in R_1$. By Lemma \ref{l5}, $f\ad f_1$ and $x\nad \{f,f_1\}$
for $x\in R_2-f_2$. Applying Lemma \ref{l5} to $\{f,f_1\}$, we
obtain that $x$ must be adjacent to $f$ or $f_1$, a contradiction.
Thus, $L_F=\{f_1,f_2\}$.

{\bf Type E.} $|R_1|\ge 2$, $|R_2|\ge 2$, $L_F=\{f_1,f_2\}$, $f_1\ad f_2$.
By Lemma \ref{l5}, $y_1\ad y_2$ and $u_1\nad u_2$.
The set $\{y_1,y_2\}$ dominates $PN(z)$ for any $z\in L_Z$.
Indeed, if $u_1\ad g$ for $g\in PN(z)$ and $z\in R_2-f_2$,
then $\lz u_1,g,z,f_2,f_1,z'\rz\cong P_6$, where $z'\in R_1-f_1$,
and hence $u_1\nad PN(z)$ for any $z\in R_2-f_2$.
On the other hand, by Lemma \ref{l5}, $\{u_1,y_1\}$ dominates $PN(z)$
for any $z\in R_2-f_2$. Therefore, $y_1$ dominates $PN(z)$ for any
$z\in R_2-f_2$.
Analogously, $y_2$ dominates $PN(z)$ for any $z\in R_1-f_1$.
Also, $\{y_1,y_2\}$ dominates $L_U$.
Let $z_1\in R_1-f_1$ and $z_2\in R_2-f_2$.
By Lemma \ref{l5}, $\{f_1,f_2\}$ dominates $L_X$.
Also, $\{f_1,f_2\}$ dominates $PN(f_1)\cup PN(f_2)$.
Thus, the set $D=(L_X-\{z_1,z_2\})\cup \{y_1,y_2\}$ dominates $L$
and $|D|=|L_X|$.

Consider the case when only one of the sets $R_1$ and
$R_2$ is of size 1. W.l.o.g., let $|R_1|=1$ and $|R_2|\ge 2$.
This case is subdivided into two subcases.
The {\it type F} is defined by the property $L_F=\{f_1,f_2\}$.

{\bf Type F.} $|R_1|=1$, $|R_2|\ge 2$, $L_F=\{f_1,f_2\}$, $f_1\ad f_2$.
By Lemma \ref{l5}, $y_1\ad y_2$, \mbox{$u_1\nad u_2$,}
$\{u_1,y_1\}$ dominates $PN(z)$ for any $z\in L_Z$,
$f_2$ dominates $L_X$, and $\deg_{\lz L_X\rz}f_1=1$.
Obviously, $\{y_1,y_2\}$ dominates $L_U$.
If there is a $g\in PN(f_1)$ such that $g\nad \{y_1,y_2\}$,
then $\lz g,u_1,y_1,y_2,f_2,z\rz\cong P_6$, where $z\in L_Z$.
Therefore, $\{y_1,y_2\}$ dominates $PN(f_1)\cup L_U$.

Finally, let $|L_F|\ge 3$, $f_1,f_2,f_3\in L_F$.
By Lemma \ref{l5}, $y_1\ad y_2$, $u_1\nad u_2$,
$f_2$ dominates $L_X$, and $\deg_{\lz L_X\rz}f_1=1$.
Suppose that $|L_X|\ge 4$ and let $x\in L_x-\{f_1,f_2,f_3\}$.
Assume that $f_3\ad x$.
Since $f_2$ dominates $L_X$, we have $\lz x,f_2,f_3\rz\cong K_3$,
contrary to the hypothesis, and hence $f_3\nad x$.
By Lemma \ref{l2}, $u_1$ does not dominate $PN(f_3)$ and let $y_3\in PN(f_3)$,
$y_3\nad u_1$.
We have $y_1\ad y_3$, for otherwise $\lz u_1,y_1,f_1,f_2,f_3,y_3\rz\cong P_6$.
Now, $\lz u_1,y_1,y_3,f_3,f_2,x\rz\cong P_6$, a contradiction.
Therefore $|L_X|=3$.

{\bf Type G.} $|R_1|=1$, $|R_2|=2$, $L_X=L_F=\{f_1,f_2,f_3\}$,
$L_Z=\es$, $f_1\ad f_2$.
By Lemma \ref{l5}, $y_1\ad y_2$, $u_1\nad u_2$, $f_2\ad f_3$, $f_1\nad f_3$.
If there is a $g\in PN(f_1)$ such that $g\nad \{y_1,y_2\}$,
then $\lz g,u_1,y_1,y_2,f_2,f_3\rz\cong P_6$.
Therefore, $\{y_1,y_2\}$ dominates $PN(f_1)$.

It remains to consider the case when $|L_X|\ge 3$ and there is an
$x\in L_X$ with $x\ad \{f_1,f_2\}$.
The following lemma will be needed.

\begin{lem}
\label{l6}
Let $f_1,f_2\in F$, $f_1\ad f_2$, and $y_1,y_2,u_1,u_2$ be chosen by Lemma 2.
If there is a vertex $v\in V(H)$ such that $v\ad \{f_1,f_2\}$
and $v\nad \{y_1,y_2\}\cup \bh N$, then $y_1\ad y_2$ and at least
one of the following properties holds:

(i) $u$ dominates $\bh N$ for any $u\in N_1$;

(ii) $u$ dominates $\bh N$ for any $u\in N_2$.
\end{lem}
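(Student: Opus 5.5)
The plan is to work entirely inside the induced subgraph spanned by $v,f_1,f_2,y_1,y_2$ and a few vertices of $\bh N$. I would produce either an induced $P_6$ or one of $F_1$--$F_{11}$, both of which are excluded for $H$. I take $\bh N$ to be defined with respect to the current $y_1,y_2$, as in the type A definition. Throughout I use the standing facts:
\begin{itemize}
\item vertices of $U$ are nonadjacent to $X$;
\item $y_i\in PN(f_i)$ gives $y_1\nad f_2$ and $y_2\nad f_1$;
\item $u_i\ad y_i$ and $u_1\nad y_2$, $u_2\nad y_1$, so $u_1\in N_1$ and $u_2\in N_2$;
\item by hypothesis $v$ has no neighbour among $y_1,y_2$ or in $\bh N$.
\end{itemize}

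\emph{Step 1: $y_1\ad y_2$.} Suppose $y_1\nad y_2$. If $u_1\nad u_2$, then $\lz u_1,y_1,f_1,f_2,y_2,u_2\rz\cong P_6$, since the only possible chords are excluded by the facts above. If $u_1\ad u_2$, then $\{u_1,y_1,f_1,f_2,y_2,u_2\}$ induces a $6$-cycle with the chord $f_1f_2$. Together with $v$, which forms a triangle on that chord and sees nothing else, it gives a fixed $7$-vertex graph. I would identify this graph with a member of the list in Figure~\ref{11graphs}, or otherwise check directly that it has $ir<\ga$ and so is excluded. I am not fully certain which of these applies, so this subcase needs checking against the figure.

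\emph{Step 2: $N_1\ad N_2$.} Now let $y_1\ad y_2$, and take $a\in N_1$, $b\in N_2$ with $a\nad b$. Then $\{v,f_1,f_2,y_1,y_2,a,b\}$ induces exactly $F_1$: the ladder rungs are $f_1f_2$ and $y_1y_2$, the top vertices $a,b$ are nonadjacent, and $v$ is the apex below $f_1f_2$. Hence every vertex of $N_1$ is adjacent to every vertex of $N_2$.

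\emph{Step 3: both (i) and (ii) failing gives a contradiction.} Suppose both properties fail. Failure of (i) gives $a\in N_1$ and $a'\in\bh N$ with $a\nad a'$; by Step~2, $a'\in N_1\cup N_{1,2}$. Symmetrically, failure of (ii) gives $b\in N_2$ and $b'\in N_2\cup N_{1,2}$ with $b\nad b'$. Lemma~\ref{l4}(b) (whose $P_6$ arguments only use a vertex adjacent to $f_1$ and $f_2$, here played by $v$) may constrain adjacencies between $N_{1,2}$ and $N_1\cup N_2$. I would then split into cases:
\begin{itemize}
\item whether $a'\in N_1$ or $a'\in N_{1,2}$;
\item whether $b'\in N_2$ or $b'\in N_{1,2}$;
\item whether $a'\ad b'$, and whether $a'=b'$ (possible only when both lie in $N_{1,2}$).
\end{itemize}
In each case the vertices $v,f_1,f_2,y_1,y_2,a,b,a',b'$ should induce a $P_6$ or one of $F_2$--$F_{11}$. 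These are the ladder-with-apex graphs with two or three extra vertices attached to the $y$'s. The dotted edges in Figure~\ref{11graphs} correspond exactly to the optional adjacencies such as $a'b'$, or $a'\ad y_2$ when $a'\in N_{1,2}$.

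\emph{Main obstacle.} I expect Step~3 to be the hardest part: the bookkeeping that matches each adjacency pattern to the correct $F_i$ and confirms that no pattern slips through. I would first treat the case $a'=b'\in N_{1,2}$, which gives the $8$-vertex graphs $F_2$--$F_6$. I would then treat distinct $a',b'$, where the $9$-vertex graphs $F_7$--$F_{11}$ should appear, using $P_6$'s such as $\lz a',y_1,f_1,v,f_2,y_2\rz$-type paths to kill the remaining mixed cases.
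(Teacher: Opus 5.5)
\textbf{Overall.} Your Step 2 is exactly the paper's argument, since $\lz v,f_1,f_2,y_1,y_2,a,b\rz\cong F_1$. Steps 1 and 3, however, are left unfinished, and some of the tools you propose for Step 3 would not work. In Step 1 the subcase $u_1\ad u_2$ needs no matching against the figure: $\lz v,f_1,y_1,u_1,u_2,y_2\rz$ is already an induced $P_6$. All of its possible chords are ruled out by $v\nad\{y_1,y_2\}\cup\bh N$, $U\nad X$, $y_2\nad f_1$, $u_1\nad y_2$, $u_2\nad y_1$ and the assumption $y_1\nad y_2$. This is how the paper closes that case.

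\textbf{The gap in Step 3.} Three of your signposts point the wrong way.
\begin{enumerate}
\item Lemma~\ref{l4}(b) does not transfer. Its $P_6$'s, such as $\lz v,u,y_2,f_2,z,f_1\rz$, rely on $f_1\nad f_2$ (type A). Here $f_1\ad f_2$, and your own Step 2 gives the opposite conclusion $N_1\ad N_2$. For the same reason any path of the form $\lz a',y_1,f_1,v,f_2,y_2\rz$ contains the chord $f_1f_2$ and is never induced.
\item $F_7$--$F_{11}$ have 17 vertices (the paper uses them in Claim 3 of Lemma~\ref{delta}), so they cannot occur on your nine vertices. Only $P_6$ and $F_1$--$F_6$ are relevant, and your case $a'=b'\in N_{1,2}$ gives just $F_2$.
\item Your case list omits the adjacencies $a'b$ and $ab'$, and settling these first is the missing idea.
\end{enumerate}

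\textbf{How to settle $a'b$ and $ab'$.} Suppose $a'\nad b$. Then $a'\notin N_1$ by Step 2, so $a'\in N_{1,2}$, and $\lz v,f_1,f_2,y_1,y_2,a,b,a'\rz\cong F_2$. Hence $a'\ad b$, and symmetrically $b'\ad a$. Since $b\nad b'$, this also forces $a'\ne b'$.

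\textbf{Finishing the case analysis.} Among the nine vertices only $y_1b'$, $a'b'$ and $a'y_2$ are now undetermined.
\begin{enumerate}
\item If $a'\ad b'$, you get $F_3$, $F_4$ or $F_6$.
\item If $a'\nad b'$, $a'\ad y_2$ and $b'\ad y_1$, you get $F_5$.
\item Otherwise assume, say, $a'\nad y_2$. Then $b'\ad y_1$, since otherwise $\lz v,f_1,f_2,y_1,y_2,a',b'\rz\cong F_1$. Deleting $a$ then leaves $F_2$: $a'$ plays the $N_1$-vertex, $b$ the $N_2$-vertex, and $b'$ the $N_{1,2}$-vertex adjacent to neither.
\end{enumerate}
As written, your Step 3 lists cases but does not verify a contradiction in any of them.
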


\pf Suppose that $y_1\nad y_2$. We have $u_1\ad u_2$, for
otherwise $\lz u_1,y_1,f_1,f_2,y_2,u_2\rz\cong P_6$. Now $\lz
v,f_1,y_1,u_1,u_2,y_2\rz\cong P_6$, a contradiction. Thus, $y_1\ad
y_2$. We have $u'\ad u''$ for every $u'\in N_1$ and every $u''\in
N_2$, for otherwise $\lz v,f_1,f_2,y_1,y_2,u',u''\rz\cong F_1$.
Suppose that neither (i) nor (ii) holds, thus there is an $a\in
N_1$ not dominating $\bh N$ and $b\in N_2$ not dominating $\bh N$.
We have $a\ad b$. Let $a\nad c\in \bh N$ and let $b\nad d\in \bh
N$. The vertex $c$ cannot belong to $N_2$, and hence $c\in N_1\cup
N_{1,2}$, i.e., $c\ad y_1$. Also, $d$ cannot belong to $N_1$, and
so $d\in N_2\cup N_{1,2}$ and $d\ad y_2$. Suppose that $c\nad b$.
Then $c\not\in N_1$, i.e., $c\in N_{1,2}$ and hence $c\ad y_2$. We
obtain $\lz v,f_1,f_2,y_1,y_2,c,a,b\rz\cong F_2$. Thus, $c\ad b$.
Analogously, $d\ad a$. Consider the graph $F=\lz
v,f_1,f_2,y_1,y_2,a,b,c,d\rz$. The only edges undetermined in $F$
are $y_1d$, $dc$, and $cy_2$. Suppose that $d\ad c$. If $c\nad
y_2$ then $F\cong F_3$ or $F_4$, and if $c\ad y_2$ then $F\cong
F_4$ or $F_6$. Thus, $d\nad c$. If $d\ad y_1$ and $c\ad y_2$, then
$F\cong F_5$, hence w.l.o.g. $c\nad y_2$. We have $d\ad y_1$, for
otherwise $F-\{a,b\}\cong F_1$. Now $F-a\cong F_2$, a
contradiction. \qed

{\bf Type H.} $|L_X|\ge 3$, $f_1,f_2\in L_F$, $x\in L_X$, $\lz f_1,f_2,x\rz\cong K_3$.
Let $v$ be an arbitrary vertex of $L_X-f_1$.
By Lemma \ref{l3}, $\lz L_X\rz$ is a connected graph, and hence
there is a shortest path $P$ connecting $f_1$ and $v$.
If $|P|\ge 3$, then $\lz u_2,u_1,y_1,f_1\rz\cup P$ contains
the induced $P_6$.
Therefore, $|P|=2$, i.e., $f_1$ is adjacent to each vertex of $L_X-f_1$.
Analogously, $f_2$ dominates $L_X$.
Now let $f\in L_F-\{f_1,f_2\}$.
We have $f\ad f_1$ and, by Lemma \ref{l2}, there are corresponding
vertices $y,y'_1,u,u'_1$.
In the same way as above, we can show that $f$ dominates $L_X$.
Hence $\lz  L_F\rz$ is a complete graph.
By Lemma \ref{l6}, let w.l.o.g. $u_2\in N_2$ dominate $\bh N$.
Suppose that $u_2$ does not dominate $L_U$, i.e., there is a $u\in L_U$
such that $u\nad u_2$.
If $u\ad y_1$ or $u\ad y_2$, then $u\in \bh N$ and hence $u_2\ad u$, a contradiction.
Therefore, $u\nad \{y_1,y_2\}$ and, by Proposition 1, $u$ dominates $PN(f)$ for some $f\in L_F-\{f_1,f_2\}$.
By Lemma \ref{l2}, $u_2$ does not dominate $PN(f)$, and
let $y\in PN(f)$, $y\nad u_2$.
Thus, $u\ad y$, $u\nad \{u_2,y_2\}$, $u_2\nad y$,
and $\lz f_1,f_2,f\rz\cong K_3$, since $\lz L_F\rz$ is a complete graph.
We have $y_1\ad y_2$, for otherwise $\lz u,y,f,f_2,y_2,u_2\rz\cong P_6$.
Now $\lz f_1,f_2,f,y_2,y,u_2,u\rz\cong F_1$, a contradiction.
Thus, $u_2$ dominates $L_U\cup PN(f_2)$.
Put $D=\{u_2\}\cup (L_X-f_2)$.
Obviously, $D$ dominates $L$ and $|D|=|L_X|$.

\2
In fact, we have proved the following lemma.

\begin{lem}
\label{l7}
Any connected component of the graph $H-W$ has one of the types A--H.
\end{lem}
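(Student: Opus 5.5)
Lemma~\ref{l7} is a bookkeeping statement: the preceding discussion has already derived the structure of each type. The plan is therefore to check that the case distinctions used to define types A--H are exhaustive for an arbitrary component $L$ of $H-W$, and that every such $L$ actually falls into one of them. First I will recall the facts valid for every $L$. Since every vertex of $L_U$ has a neighbour in $L_{PN}$ (Proposition 1) and every vertex of $L_{PN}$ has a neighbour in $L_X$, we have $L_X\neq\es$, and by Lemma~\ref{l3} the graph $\lz L_X\rz$ is connected.

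The primary split is on whether $L_F$ is independent. If $L_F$ is independent, $L_Z\neq\es$: otherwise $L_X=L_F$ is a nonempty independent connected set, hence a single vertex, contradicting Lemma~\ref{l1} (a vertex of $X$ has an $X$-neighbour, which lies in the same component $L$). Now let $m=\max_{z\in L_Z}|N(z)\cap L_F|$.
\begin{itemize}
\item If $m\ge 2$, $L$ is of type A, with the structure established before Lemma~\ref{l4}.
\item If $m=1$, $L$ is of type B; the argument given there forces $|L_F|=1$.
\item If $m=0$, connectivity of $\lz L_X\rz$ forces $L_F=\es$, so $L$ is of type C.
\end{itemize}
Note that types A and B as defined in the text are exactly ``some $z$ has at least two $L_F$-neighbours'' and ``some $z$ has exactly one $L_F$-neighbour'' (checked in the order A before B), so these three cases are exhaustive.

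If $L_F$ is dependent, fix adjacent $f_1,f_2\in L_F$ and split further.
\begin{itemize}
\item If $|L_X|=2$, $L$ is of type D.
\item If $|L_X|\ge 3$ and some $x\in L_X$ forms a triangle with two vertices of $L_F$, $L$ is of type H.
\item Otherwise Lemma~\ref{l5} applies and yields the bridge decomposition into $R_1,R_2$. The subcase $|R_1|,|R_2|\ge 2$ gives type E. If exactly one of them, say $R_1$, is a singleton, then either $L_F=\{f_1,f_2\}$ (type F) or $|L_F|\ge 3$, which by the argument given forces $|L_X|=3$ (type G).
\end{itemize}

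The remaining subcase is $|R_1|=|R_2|=1$, which would mean $|L_X|=2$ and contradicts $|L_X|\ge 3$. In the triangle-free case, $|L_F|\ge3$ together with $|R_1|=1$ requires $f_3\in R_2$; this is handled inside the type G argument.

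The only real obstacle is confirming that no subcase was silently skipped. The two points to check are the possibility $|R_1|=|R_2|=1$, excluded above, and the claim in type E that $L_F=\{f_1,f_2\}$; both are covered by the text. With these verified, every component has one of the types A--H, which proves Lemma~\ref{l7}.
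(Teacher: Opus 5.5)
Your proposal is correct and matches the paper, whose proof of this lemma is simply the preceding case analysis. That analysis splits on whether $L_F$ is independent: types A, B, C are assigned according to the largest number of $L_F$-neighbours of a vertex of $L_Z$; types D, H, E, F, G according to $|L_X|$, the triangle condition, and the bridge decomposition of Lemma~\ref{l5}. Your exhaustiveness check, including the observation that $|R_1|=|R_2|=1$ would force $|L_X|=2$, is exactly what the paper leaves implicit.
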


Now we will construct a set $I$ such that
$|I|\le |X|$ and $I$ dominates the graph $H-W$.
If $L$ has type C, then we put $L_Z\subseteq  I$.
Observe that $L_Z$ dominates $L$ and $|L_Z|=|L_X|$.
For the types E and H, we add in $I$ the set $D$ constructed
in the definitions of these types.
The types A, B, D, F, and G are subdivided into subtypes.
For each subtype we define a dominating set $D$ and add it in the set $I$.
Let the component $L$ be of type A.

{\bf Type A1.}
There is a $w\in W$ dominating $L_U\cup PN(f_2)$.
Put $D=(L_X-f_2)\cup \{w\}\subseteq I$.
It is clear that $D$ dominates $L$ and $|D|=|L_X|$.

{\bf Type A2.} There is no $w\in W$ dominating $L_U\cup PN(f_2)$,
and the set $\{y_1,y_2\}$ does not dominate $PN(z)$ for some $z\in
L_Z$. That is, there is a $p\in PN(z)$ and $p\nad \{y_1,y_2\}$. If
there is a $u\in N_1$ such that $u\nad p$, then $\lz
u,y_1,y_2,f_2,z,p\rz\cong P_6$. Therefore, $p$ dominates $N_1$. If
there is a $u\in N_2$ such that $u\nad p$, then $\lz
u,y_2,y_1,f_1,z,p\rz\cong P_6$. Therefore, $p$ dominates $N_2$. At
last, let there exist an $u\in N_{1,2}$ such that $u\nad p$. By
Lemma \ref{l4}, $u\nad u_1$ where $u_1\in N_1$. We have $\lz
u_1,p,z,f_2,y_2,u\rz\cong P_6$, and hence $p$ dominates $N_{1,2}$.
Thus, $p$ dominates $L_U$. Assume that there is a $z'\in L_Z$,
$z'\not=z$. We know that $z'\ad \{f_1,f_2\}$, and $u_1\nad u_2$ by
Lemma \ref{l4}. We obtain $\lz u_1,p,u_2,y_2,f_2,z'\rz\cong P_6$,
and hence $L_Z=\{z\}$. Since $p\nad\{y_1,y_2\}$ and $p\ad u_1$, we
have $p$ dominates $PN(z)$ by Proposition 1. Put
$D=\{f_1,f_2,p\}\subseteq I$. 
Note that $D$ dominates $L$ and
$|D|=|L_X|=3$.

{\bf Type A3.}
There is no $w\in W$ dominating $L_U\cup PN(f_2)$,
and the set $\{y_1,y_2\}$ dominates $\cup_{z\in L_Z}PN(z)$.
Let there exist a vertex $w\in W$ such that $w$ dominates
$L_U\cup PN(z)$ for some $z\in L_Z$ and $w\nad \{f_1,y_1,y_2\}$.
Put $D=(L_X-z)\cup \{w\}\subseteq I$.
It is clear that $D$ dominates $L$ and $|D|=|L_X|$.

{\bf Type A4.}
There is no $w\in W$ dominating $L_U\cup PN(f_2)$,
the set $\{y_1,y_2\}$ dominates $\cup_{z\in L_Z}PN(z)$,
and there is no vertex $w\in W$ such that $w$ dominates $L_U\cup PN(z)$
for some $z\in L_Z$ and $w\nad \{f_1,y_1,y_2\}$.
Put $D=(L_X-\{z,f_2\})\cup\{y_1,y_2\}\subseteq I$, where $z\in L_Z$.
Since $\{y_1,y_2\}$ dominates $L_U\cup L_{PN}$ and $z\ad f_1$,
$f_2\ad y_2$, it follows that $D$ dominates $L$ and $|D|=|L_X|$.

Now let $L$ have type B.

{\bf Type B1.} The vertex $y$ dominates $PN(z)$ for some $z\in L_Z$.
Let $p\in PN(z)$.
Put $D=(L_X-z)\cup \{y\}\subseteq I$.
We have, $y$ dominates 
$L_U\cup PN(z)$,
$L_X-z$ dominates \mbox{$(L_X\cup L_{PN})-(PN(z)\cup \{z\})$.}
By Lemma \ref{l3}, $z\ad v$ for some $v\in L_X$,
and so $L_X-\{z\}$ also dominates $z$.
Thus, $D$ dominates $L$ and $|D|=|L_X|$.

Consider the case when $y$ does not dominate $PN(z)$ for any $z\in L_Z$.
By Lemma \ref{l3}, $f\ad z$ for some $z\in L_Z$.
Denote the set of such components by ${\cal B}$,
and ${\cal D}$ will denote the set of components having type $D$.

{\bf Types B2--B6, D1, D2} are defined by the following algorithmic
procedure running until ${\cal B}=\es$.
\be
\item
Take a component $L\in {\cal B}$.
If ${\cal B}=\es$, then exit.

\item
If there are another component $L'\in {\cal B}$ with the corresponding
vertices $f',y',z'$ and a vertex $w\in W$ such that $w$ dominates
$L_U\cup L'_U\cup PN(z)\cup PN(z')\cup \{f,f'\}$ and $w\nad \{y,z,y',z'\}$, then $L$ is
called a {\it component of type B2} and $L'$ is called a {\it component of type B3}.
We put ${\cal B}={\cal B}-\{L,L'\}$ and start this procedure again.

\item
If there are an $L'\in {\cal D}$ and a $w\in W$ such that $w$ dominates
$L_U\cup L'_U\cup PN(z)\cup \{f,f_1,f_2\}$ and $w\nad \{y,z,y_1,y_2\}$, then
$L$ has {\it type B4} and $L'$ has {\it type D1}.
We put ${\cal B}={\cal B}-L$, ${\cal D}={\cal D}-L'$ and start the procedure
again.

\item
If there are an $L'\in {\cal D}$ and a $w\in W$ such that $w$ dominates
$L_U\cup PN(z)\cup \{f\}$, $w\nad \{y,z\}$, and also
$w\ad L'_{PN}\cup N'_2\cup \{f_1\}$, $w\nad \{u_1,f_2\}$
(or $w\ad L'_{PN}\cup N'_1\cup \{f_2\}$, $w\nad \{u_2,f_1\}$),
then $L$ has {\it type B5} and $L'$ has {\it type D2}.
We put ${\cal B}={\cal B}-L$, ${\cal D}={\cal D}-L'$
and start the procedure again.

\item
If there is no component $L'$ as above, then $L$ is called a {\it component
of type B6}.
We put ${\cal B}={\cal B}-L$ and go on with the procedure.
\ee

If $L$ has type B2, B4 or B5, then we
put $D=(L_X-z)\cup \{w\}\subseteq I$.
Obviously, $D$ dominates $L$ and $|D|=|L_X|$.
For a component $L'$ of type B3, we put $D=L'_X\subseteq I$.
Although $D$ does not dominate $L'$, the set $I$ will dominate $L'$, since
the above vertex $w\in I$ dominates $L'_U$, and $L'_X$ dominates $L'_X\cup L'_{PN}$.
Let $L$ have type B6.
Put $D=L_Z\cup \{y\}\subseteq I$.
By Proposition 1, $y$ dominates $PN(x)$ for some $x\in L_X$.
The only possibility is that $y$ dominates $PN(f)$.
Moreover, $y$ dominates $L_U\cup\{f\}$.
Thus, $D$ dominates $L$ and $|D|=|L_X|$.
If $L'$ has type D1, then
put $D=\{f_1,f_2\}\subseteq I$, thus $|D|=|L'_X|=2$.
Although $D$ does not dominate $L'$, the set $I$ will dominate $L'$, since
the above $w\in I$ dominates $L'_U$, and $\{f_1,f_2\}$ dominates $L'-L'_U$.
If $L'$ is of type D2, then
put $D=\{f_1,y_1\}\subseteq I$
(or $D=\{f_2,y_2\}\subseteq I$ if $w\ad L'_{PN}\cup N'_1\cup \{f_2\}$, $w\nad \{u_2,f_1\}$),
thus $|D|=|L'_X|=2$.
Although $D$ does not dominate $L'$, the set $I$ will dominate $L'$, since
the above $w\in I$ and $D$ dominate $L'$.

{\bf Types $D'$, $D'_1$, $D'_2$; $D''$, $D''_1$, $D''_2$.}
For the component $L$ of type D we define two auxiliary subtypes $D'$ and $D''$.
We say that $L$ has {\it type} $D'$ if there is no vertex $w_0\in W$ such that
$w_0\ad \{f_1,f_2\}$ and $w_0\nad L_U\cup \{y_1,y_2\}$,
and $L$ has {\it type} $D''$ if such a vertex does exist.
Let $L$ have type $D'$.
We know that $\{y_1,y_2\}$ dominates $L$.
If neither $\{u_1,f_2\}$ nor $\{u_2,f_1\}$ dominate $L$, then
$L$ is called a {\it component of type} $D'_1$.
If at least one of these sets dominates $L$, say $\{u_1,f_2\}$
dominates $L$, then $L$ is of {\it type} $D'_2$.
Now, let $L$ have type $D''$.
By Lemma \ref{l6}, $y_1\ad y_2$, there are all edges between $N_1$ and
$N_2$, and w.l.o.g. (i) holds. 
Hence $\{u_1,f_2\}$ dominates $L$.
If $\{u_2,f_1\}$ does not dominate $L$, then $L$ has {\it type} $D''_1$,
and $L$ is of {\it type} $D''_2$ otherwise.
Note that $w_0$ is not adjacent to a vertex of other component, for
otherwise $\lz u_1,u_2,y_2,f_2,w_0,v\rz\cong P_6$ where $v\in N(w_0)-V(L)$.

{\bf Types D3--D24} of components are defined in the following procedure, where
we consider components from the set ${\cal D}$ determined by the procedure above.

\be
\item
If there are a component $L$ of type $D''$, a component $L'\in{\cal D}$,
and a vertex $w\in W$ such that
$w\nad \{y_1,f_1\}$ and $w$ dominates $N_2\cup \{f_2\}\cup L'$,
then $L$ is called a {\it component of type D3}
and $L'$ is called a {\it component of type D4}.
We put ${\cal D}={\cal D}-\{L,L'\}$ and start this step again.
If the above components do not exist, then go to the next step.

\item
If there are a component $L$ of type $D'$, a component $L'\in{\cal D}$,
and a vertex $w\in W$ such that
$w\nad \{f_1,f_2\}$ and $w$ dominates $N_2\cup \{y_1\}\cup L'$,
then $L$ has {\it type D5}
and $L'$ has {\it type D6}.
We put ${\cal D}={\cal D}-\{L,L'\}$ and start this step again.
If the above components do not exist, then go to the next step.

\item
If there are an $L$ of type $D''_1$, an $L'$ of type $D'_1$,
a $w\in W$ such that $w$ dominates
$L_{PN}\cup N_2\cup N_{1,2}\cup \{f_1\}\cup L'_U\cup \{f'_1,f'_2\}$,
$w\nad \{u_1,f_2,y'_1,y'_2\}$, and $N'_{1,2}\not=\es$, $y'_1\nad y'_2$,
then $L$ has {\it type D7}
and $L'$ has {\it type D8}.
We put ${\cal D}={\cal D}-\{L,L'\}$ and start this step again.
If the above components do not exist, then go to the next step.

\item
If there are an $L$ of type $D''_1$, an $L'$ of
type $D'_2$, and vertices $w,w'\in W$ such that
$\{w,w'\}\ad L_{PN}\cup \{f_1\}\cup N_2\cup N_{1,2}$,
$\{w,w'\}\nad \{u_1,f_2\}$,
and $w\nad\{y'_1,y'_2\}$, $w'\nad\{u'_1,f'_2\}$,
$w\ad L'_U\cup\{f'_1,f'_2\}$,
$w'\ad L'_{PN}\cup \{f'_1\}$,
then $L$ has {\it type D9}
and $L'$ has {\it type D10}.
We put ${\cal D}={\cal D}-\{L,L'\}$ and start this step again.
If the above components do not exist, then go to the next step.

\item
If there are an $L$ of type $D''_1$,
an $L'$ of type $D''_2$, and $w,w'\in W$ such that
$\{w,w'\}\ad L_{PN}\cup \{f_1\}\cup N_2\cup N_{1,2}$,
$\{w,w'\}\nad \{u_1,f_2\}$,
and $w\nad \{u'_1,f'_2\}$, $w'\nad \{u'_2,f'_1\}$,
\mbox{$w\ad L'_{PN}\cup \{f'_1\}\cup N'_2\cup N'_{1,2}$,}
$w'\ad L'_{PN}\cup \{f'_2\}\cup N'_1\cup N'_{1,2}$,
then $L$ has {\it type D11}
and $L'$ has {\it type D12}.
We put ${\cal D}={\cal D}-\{L,L'\}$ and start this step again.
If the above components do not exist, then go to the next step.

\item
If there are an $L$ of type $D''_2$,
an $L'$ of type $D'_1$, and $w,w'\in W$ such that
$w\nad \{u_1,f_2\}$, $w'\nad \{u_2,f_1\}$,
$w\ad L_{PN}\cup \{f_1\}\cup N_2\cup N_{1,2}$,
$w'\ad L_{PN}\cup \{f_2\}\cup N_1\cup N_{1,2}$,
and $\{w,w'\}\ad L'_U\cup\{f'_1,f'_2\}$, $\{w,w'\}\nad\{y'_1,y'_2\}$,
then $L$ has {\it type D13}
and $L'$ has {\it type D14}.
We put ${\cal D}={\cal D}-\{L,L'\}$ and start this step again.
If the above components do not exist, then go to the next step.

\item
If there are an $L$ of type $D''_2$,
an $L'$ of type $D'_2$, and $w,w'\in W$ such that
$w\nad \{u_1,f_2\}$, $w'\nad \{u_2,f_1\}$,
$w\ad L_{PN}\cup \{f_1\}\cup N_2\cup N_{1,2}$,
$w'\ad L_{PN}\cup \{f_2\}\cup N_1\cup N_{1,2}$,
and $w\nad\{y'_1,y'_2\}$, $w'\nad\{u'_1,f'_2\}$,
$w\ad L'_U\cup\{f'_1,f'_2\}$,
$w'\ad L'_{PN}\cup \{f'_1\}$,
then $L$ has {\it type D15}
and $L'$ has {\it type D16}.
We put ${\cal D}={\cal D}-\{L,L'\}$ and start this step again.
If the above components do not exist, then go to the next step.

\item
If there are an $L$ of type $D''_2$, an
$L'$ of type $D''_2$, and $w,w'\in W$ such that
$w\nad \{u_1,f_2\}$, $w'\nad \{u_2,f_1\}$,
$w\ad L_{PN}\cup \{f_1\}\cup N_2\cup N_{1,2}$,
$w'\ad L_{PN}\cup \{f_2\}\cup N_1\cup N_{1,2}$,
and $w\nad \{u'_1,f'_2\}$, $w'\nad \{u'_2,f'_1\}$,
$w\ad L'_{PN}\cup \{f'_1\}\cup N'_2\cup N'_{1,2}$,
$w'\ad L'_{PN}\cup \{f'_2\}\cup N'_1\cup N'_{1,2}$,
then $L$ has {\it type D17}
and $L'$ has {\it type D18}.
We put ${\cal D}={\cal D}-\{L,L'\}$ and start this step again.
If the above components do not exist, then go to the next step.

\item
If there are an $L$ of type $D'_1$, an
$L'$ of type $D'_2$, and $w,w'\in W$ such that
$\{w,w'\}\nad \{y_1,y_2\}$,
$\{w,w'\}\ad L_U\cup \{f_1,f_2\}$,
and $w\nad\{y'_1,y'_2\}$, $w'\nad\{u'_1,f'_2\}$,
$w\ad L'_U\cup\{f'_1,f'_2\}$, $w'\ad L'_{PN}\cup \{f'_1\}$,
then $L$ has {\it type D19}
and $L'$ has {\it type D20}.
We put ${\cal D}={\cal D}-\{L,L'\}$ and start this step again.
If the above components do not exist, then go to the next step.

\item
If there are an $L$ of type $D'_2$, an 
$L'$ of type $D'_2$, and $w,w'\in W$ such that
$w\nad \{u_1,f_2\}$, $w'\nad \{y_1,y_2\}$,
$w'\ad L_U\cup \{f_1,f_2\}$, $w\ad L_{PN}\cup \{f_1\}$,
and $w\nad\{y'_1,y'_2\}$, $w'\nad\{u'_1,f'_2\}$,
$w\ad L'_U\cup\{f'_1,f'_2\}$, $w'\ad L'_{PN}\cup \{f'_1\}$,
then $L$ has {\it type D21}
and $L'$ has {\it type D22}.
We put ${\cal D}={\cal D}-\{L,L'\}$ and start this step again.
If the above components do not exist, then go to the next step.

\item
If there are an $L$ of type $D'_2$, an 
$L'$ of type $D'_2$, and $w,w'\in W$ such that
$w\nad \{u_1,f_2\}$, $w'\nad \{y_1,y_2\}$,
$w'\ad L_U\cup \{f_1,f_2\}$, $w\ad L_{PN}\cup \{f_1\}$,
and $w'\nad\{y'_1,y'_2\}$, $w\nad\{u'_1,f'_2\}$,
$w'\ad L'_U\cup\{f'_1,f'_2\}$, $w\ad L'_{PN}\cup \{f'_1\}$,
then $L$ has {\it type D23}
and $L'$ has {\it type D24}.
We put ${\cal D}={\cal D}-\{L,L'\}$ and start this step again.
If the above components do not exist, then exit the procedure.
\ee

If $L$ has type D3 (D5), then there is an $L'$ of type D4 (D6). Put
\mbox{$D=\{y_1,f_1,f_2,w\}\subseteq I$.} It is obvious that $D$ dominates $L\cup
L'$. Let $L$ be of type D7 and $L'$ be of type D8. Put
$D=\{y_1,f_1,w,u\}\subseteq I$ where $u\in N'_{1,2}$. 
Note that 
$D$ dominates $(L\cup L')-L'_{PN}$. Let there be an $y\in
L'_{PN}$, say $y\in PN(f'_2)$, such that $y\nad D$. Since
$y'_1\nad y'_2$ and $y'_2\ad u'_2$, we conclude by Proposition 1
that $y'_2$ dominates $PN(f'_2)$. Thus $\lz
f_2,f_1,w,u,y'_2,y\rz\cong P_6$ and hence $D$ dominates $L\cup
L'$. If $L$ has type D9 (D11), then there is an $L'$ of type D10
(D12). Put $D=\{y_1,f_1,w,w'\}\subseteq I$. Obviously, $D$
dominates $L\cup L'$. If $L$ has type D13 (D15, D17), then there
is $L'$ of type D14 (D16, D18). Put $D=\{w,w',f'_1,f'_2\}\subseteq
I$. Obviously, $D$ dominates $L\cup L'$. If $L$ has type D19 (D21,
D23), then there is an $L'$ of type D20 (D22, D24). Put
$D=\{f_1,f_2,w,w'\}\subseteq I$. Obviously, $D$ dominates $L\cup
L'$.

{\bf Type D25.}
The set ${\cal D}$ determined by the above procedure
contains components of {\it type D25}.
We put
$$
W_1 = \{w\in W: N(w)\cap X\subseteq \cup_{L\in{\cal D}}L_X\}.
$$
A {\it $\Delta$-set} $S$ is formed by the following rule. For each
component $L\in {\cal D}$ we add in $S$ one of the 2-vertex
dominating sets. Thus, we add $\{y_1,y_2\}$ if $L$ is of type
$D'_1$, and $\{y_1,y_2\}$ or $\{u_1,f_2\}$ if $L$ has type $D'_2$.
Also, we add $\{u_1,f_2\}$ if $L$ is of type $D''_1$, and
$\{u_1,f_2\}$ or $\{u_2,f_1\}$ if $L$ has type $D''_2$. By the
next lemma, there is a $\Delta$-set $S$ dominating
$W_1\cup_{L\in{\cal D}}L$. We put $S\subseteq I$. Obviously,
$|S|=|\cup_{L\in{\cal D}}L_X|$.

\begin{lem}
\label{delta}
There exists a $\Delta$-set $S$ that dominates  $W_1\cup\left(\bigcup_{L\in{\cal D}}L\right)$.
\end{lem}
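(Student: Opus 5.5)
Since each chosen pair already dominates its own component $L\in{\cal D}$, the whole content of the lemma is domination of $W_1$. The plan is to turn the problem into a 2-SAT-type choice problem and rule out an unsatisfiable configuration with the forbidden subgraphs $P_6$ and $F_1$--$F_{11}$.

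First fix the components of types $D'_1$ and $D''_1$, which have a single admissible pair. For every component of type $D'_2$ or $D''_2$ introduce a Boolean choice between its two pairs. For $w\in W_1$, note that $|N(w)\cap X|\ge 2$ and all these $X$-neighbours lie in components of ${\cal D}$.

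\textbf{Step 1: local analysis.} For a single $L\in{\cal D}$ and a vertex $w$ adjacent to $f_1$ or $f_2$, determine which admissible pairs of $L$ dominate $w$. The tools are Proposition 1, Lemma \ref{l6}, and the defining properties of $D'$ and $D''$. For example:
\begin{itemize}
\item If $w\ad\{f_1,f_2\}$ and $w\nad L_U\cup\{y_1,y_2\}$, then $L$ has type $D''$ by definition, and $w$ is dominated by $f_2$ or $f_1$ in any admissible pair.
\item If $w$ is adjacent to exactly one of $f_1,f_2$, say $f_1$, then $w$ fails to be dominated by the pair $\{u_1,f_2\}$ only when $w\nad u_1$. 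It fails for $\{y_1,y_2\}$ only when $w\nad\{y_1,y_2\}$.
\end{itemize}
Such a failure should force $w$ to be adjacent to specific parts of $L$, namely $L_{PN}$ and $N_2\cup N_{1,2}$, and non-adjacent to others. Each of these claims is checked by exhibiting an induced $P_6$ or $F_1$, using paths like $\lz w,f_1,f_2,y_2,u_2,\cdot\rz$.

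\textbf{Step 2: vertices undominated whatever the choice.} Show that every $w\in W_1$ that no admissible choice dominates would make the procedure assign some type D3--D24. Since $w$ has at least two $X$-neighbours, either two components are involved, or one component together with a second vertex $w'$. In each such case, the adjacency pattern from Step 1 matches exactly one of the conditions defining the pairs (D3,D4), \dots, (D23,D24). This contradicts the fact that the procedure terminated with these components still in ${\cal D}$.

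\textbf{Step 3: global consistency.} The remaining danger is that each $w$ can be dominated by some choice, but the requirements of different $w$ conflict. Each $w$ imposes a clause on the choice variables of at most two components, since $P_6$-freeness should prevent $w$ from meeting three components in a bad way (a path through $w$ and two foreign $u_i,y_i$ gives a $P_6$). So we have a 2-SAT instance. An unsatisfiable cycle of clauses would need two vertices $w,w'$ imposing opposite requirements on the same pair of components. That is precisely the configuration of types D13--D24, again excluded by termination.

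The main obstacle is Step 3 together with the exhaustive verification in Step 1, which requires case analysis over edges to $N_1$, $N_2$, $N_{1,2}$. I would first try to show that each $w$ constrains at most one component's choice nontrivially. This would reduce the lemma to independent local choices, which Step 2 already handles. If that fails, I would fall back on the 2-SAT cycle argument, realising the cycle configurations via $F_7$--$F_{11}$.
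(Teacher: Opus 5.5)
Your reduction to dominating $W_1$ is correct, and your Step~1 is essentially the paper's Claim~2. The global part of the argument, however, has a genuine gap.

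First, the problem is not 2-SAT. A vertex $w\in W_1$ imposes a disjunction with one literal for each component of type $D'_2$ or $D''_2$ in ${\cal D}_w$ where $w$ misses one admissible pair. Nothing bounds the number of such components. In the situation of Claim~2(1), $w$ is adjacent to $L_U\cup\{f_1,f_2\}$ but not to $y_1,y_2$. There your proposed $P_6$ through $w$ and foreign vertices $u_i,y_i$ is not induced, because $w$ is adjacent to the $u_i$ and the $f_i$. Your fallback, that each $w$ constrains only one component, is not available either: configurations such as D21--D24 presuppose a single vertex missed in two components of type $D'_2$.

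Second, even a genuine unsatisfiable 2-SAT instance need not contain two clauses on the same pair of variables. An implication cycle $x\Rightarrow\cdots\Rightarrow\neg x\Rightarrow\cdots\Rightarrow x$ can run through many variables. By contrast, each configuration D9--D24 requires the \emph{same} two vertices $w,w'$ to have prescribed adjacencies in the \emph{same} two components. So an unsatisfiable system does not automatically produce one of them.

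The missing idea is the structure the paper extracts from $P_6$-freeness: for any two vertices $x,y$ missed by some $\Delta$-set, ${\cal D}_x$ and ${\cal D}_y$ are either disjoint or nested. This laminarity, together with $|{\cal D}_w|\ge 2$ (Claim~1) and the fact that every ${\cal D}_w$ contains a component of type $D'_2$ or $D''_2$ (Claim~3), replaces satisfiability by an extremal argument. Assuming every $\Delta$-set misses some vertex of $W_1$, the argument runs as follows.
\begin{enumerate}
\item Take the maximal families ${\cal D}_{w_1},\dots,{\cal D}_{w_m}$; they are pairwise disjoint.
\item Choose a $\Delta$-set $S$ missing every $w_i$.
\item Form $S'$ by switching the pair in every two-choice component of these families.
\item Any $w'$ missed by $S'$ then satisfies ${\cal D}_{w'}\subseteq{\cal D}_{w_i}$ for some $i$. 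Moreover, $w_i$ and $w'$ are missed by opposite pairs in every two-choice component of ${\cal D}_{w'}$.
\item This is exactly what yields one of the pairs D9--D24 (not only D13--D24).
\end{enumerate}

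Your Step~2 also uses the wrong tools. The case where every component of ${\cal D}_w$ has a single admissible pair is not excluded by the types D3--D24 alone. It is excluded by:
\begin{itemize}
\item $F_7$--$F_9$, when there are two components of type $D''_1$;
\item D7/D8 or $F_{10},F_{11}$, when types $D''_1$ and $D'_1$ both occur;
\item a Proposition~1 plus $P_6$ argument, when all components have type $D'_1$.
\end{itemize}
That is where $F_7$--$F_{11}$ enter the proof, not in resolving cycles of clauses.
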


\pf
By definition, any $\Delta$-set $S$ dominates all components in ${\cal D}$.
Put $W(S)=W_1-N(S)$ and suppose that $W(S)\not=\es$ for any $\Delta$-set $S$.
Also, put $W_2=\cup W(S)$, where the union is taken over all $\Delta$-sets.
For $w\in W_2$, let ${\cal D}_w$ denote the set of components $L\in {\cal D}$ having
$N(w)\cap V(L)\not=\es$.

\begin{claim}
$|{\cal D}_w|\ge 2$ for any $w\in W_2$.
\end{claim}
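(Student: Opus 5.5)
We argue by contradiction: fix $w\in W_2$ and suppose $|{\cal D}_w|\le 1$. Since $w\in W\subseteq W_1$, we have $|N(w)\cap X|\ge 2$ and $N(w)\cap X\subseteq\bigcup_{L\in{\cal D}}L_X$. As $w$ is adjacent to some vertex of $X$ lying in a component of ${\cal D}$, we get $|{\cal D}_w|\ge 1$. So it remains to rule out ${\cal D}_w=\{L\}$ for a single component $L$ of type D25 with $L_X=\{f_1,f_2\}$.

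In that case $N(w)\cap X\subseteq\{f_1,f_2\}$ and $|N(w)\cap X|\ge 2$, so $w\ad\{f_1,f_2\}$. Moreover $w$ has no neighbour in any other component of ${\cal D}$, and the only domination requirement on $w$ from the $\Delta$-set comes from the pair chosen for $L$. The assumption $w\in W_2$ means $w\in W(S)$ for some $\Delta$-set $S$; I will show instead that every admissible choice for $L$ dominates $w$, contradicting $w\in W(S)$. A set containing $f_1$ or $f_2$ dominates $w$ at once, so pairs $\{u_1,f_2\}$ and $\{u_2,f_1\}$ are harmless. The only problematic choice is $\{y_1,y_2\}$, which is forced when $L$ has type $D'_1$ and optional when $L$ has type $D'_2$. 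In type $D'_2$ we may switch to $\{u_1,f_2\}$ without affecting any other component, because $w$ sees no other component in ${\cal D}$. Strictly speaking, the switch should be done simultaneously for all such $w$; I will make sure this choice is consistent. So the remaining case is $L$ of type $D'_1$ with $w\nad\{y_1,y_2\}$.

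For this case, $L$ has type $D'$, so no vertex $w_0\in W$ satisfies $w_0\ad\{f_1,f_2\}$ and $w_0\nad L_U\cup\{y_1,y_2\}$. Since $w$ is adjacent to $f_1,f_2$ and not to $y_1,y_2$, it follows that $w$ is adjacent to some $u\in L_U$. I will then use $P_6$-freeness and the $F_i$-freeness together with the structure of type D: $\{y_1,y_2\}$ dominates $L$, $u_i$ dominates $PN(f_i)$, and $u_1\nad y_2$, $u_2\nad y_1$. The aim is to show that $w$ must dominate $L_U$, which gives a configuration handled earlier.

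\textbf{Main obstacle.} Concretely, in the $D'_1$ case with $w\ad u$ I will look for an induced $P_6$ such as $\langle w,u,y_1,\dots\rangle$ or $\langle u_2,y_2,f_2,w,u',\dots\rangle$ built from $u\in L_U$ and an unrelated vertex. I expect this step to be the main difficulty, since it may require a case analysis on whether $u\in N_1$, $N_2$ or $N_{1,2}$. If the case analysis does not close directly, I would fall back on the minimality of $H$: delete $w$ and compare $ir$ and $\gamma$ of $H-w$ with those of $H$, relying on the fact that $w$ interacts only with the single component $L$.
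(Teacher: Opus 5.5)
\paragraph{Verdict: there is a genuine gap.} Your reduction is sound up to a certain point, and it matches the paper's proof that far. You correctly get ${\cal D}_w=\{L\}$, $w\ad\{f_1,f_2\}$, $S\cap V(L)=\{y_1,y_2\}$, $L$ of type $D'$, and hence $w\ad u$ for some $u\in L_U$. However, the step that produces the contradiction is missing, and the route you sketch cannot supply it.

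\paragraph{The missing step: Proposition~1.} Showing that $w$ dominates $L_U$ is not contradictory by itself. The configuration $w\ad\{f_1,f_2\}\cup L_U$, $w\nad\{y_1,y_2\}$ is compatible with $P_6$- and $F_i$-freeness on $V(L)\cup\{w\}$. For example, take $f_1f_2$, $f_iy_i$, $y_1y_2$, $y_iu_i$, and join $w$ to $f_1,f_2,u_1,u_2$. This graph has seven vertices, ten edges and maximum degree $4$, so it contains neither an induced $P_6$ nor $F_1$. What the configuration violates is the maximality of $X$, and the tool for that is Proposition~1, which you never invoke. The argument runs as follows.
\begin{enumerate}
\item Since $w\ad u\in U$, we have $w\in N[U]$, so $w$ dominates $PN(x)$ for some $x\in X$.
\item Because $w\nad y_1\in PN(f_1)$ and $w\nad y_2\in PN(f_2)$, we get $x\notin\{f_1,f_2\}$. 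Hence $x$ lies in another component of $H-W$.
\item By Lemma~\ref{l1} there is $x'\in X$ with $x\ad x'$. Take $y\in PN(x)$, so $y\ad w$.
\item Then $\langle y_1,f_1,w,y,x,x'\rangle$ is an induced $P_6$ unless $w\ad x$ or $w\ad x'$.
\item Either way $w$ has an $X$-neighbour outside $L$. Because $w\in W_1$, that neighbour lies in a component of ${\cal D}$, so $|{\cal D}_w|\ge 2$.
\end{enumerate}

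\paragraph{The fallback does not work.} Minimality of $H$ does not help here. Every vertex of $W$ has two $X$-neighbours, so $X$ stays maximal irredundant in $H-w$. Minimality then only yields $\gamma(H-w)=\gamma(H)-1$, which is not a contradiction.

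\paragraph{The $D'_2$ switch is logically wrong for this claim.} Claim~1 must hold for every $w\in W_2=\bigcup_S W(S)$, that is, for every $w$ missed by \emph{some} $\Delta$-set. Exhibiting a different $\Delta$-set that dominates $w$ does not remove $w$ from $W_2$. Making such switches consistent belongs to the final part of the proof of Lemma~\ref{delta}, not to this claim. The repair is immediate once the argument above is in place. That argument uses only that $L$ has type $D'$ and $S\cap V(L)=\{y_1,y_2\}$, so it treats $D'_1$ and $D'_2$ together.
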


\pf For any $w\in W_2$, $|N(w)\cap X|\ge 2$.
Assume that $N(w)\cap X$ contains vertices of one component $L\in {\cal D}$
only, i.e., $w\ad \{f_1,f_2\}$.
Since $w\in W(S)$ for some $\Delta$-set $S$,
it follows that the set $S$ must contain
$\{y_1,y_2\}\subset V(L)$ and the component $L$ is of type $D'$.
By the definition of $D'$, $w\ad u\in L_U$.
Hence, by Proposition 1, $w$ dominates $PN(x)$, where
$x\not\in\{f_1,f_2\}$, and by Lemma \ref{l1}, $x\ad x'\in X$.
Letting $y\in PN(x)$, we obtain $w\ad x$ or $w\ad x'$, for otherwise
$\lz y_1,f_1,w,y,x,x'\rz\cong P_6$.
\qed
\vspace{-0.3cm}

\begin{claim}
Let $L\in {\cal D}_w$ and $w\in W(S)$.
\be
\vspace*{-0.25cm}
\item[(1)] If $L$ has type $D'_1$ or $D'_2$ and $S\cap V(L)=\{y_1,y_2\}$, then
$w$ dominates $L_U\cup\{f_1,f_2\}$, $y_1\nad y_2$, and $N_1\ad N_2$.
\vspace*{-0.25cm}
\item[(2)] If $L$ has type $D'_2$ and $S\cap V(L)=\{u_1,f_2\}$, then $w$ dominates
$L_{PN}\cup \{f_1\}$.
\vspace*{-0.25cm}
\item[(3)] If $L$ is of type $D''_1$ or $D''_2$ and $S\cap V(L)=\{u_1,f_2\}$,
then $w$ dominates $L_{PN}\cup \{f_1,w_0\}\cup N_2\cup N_{1,2}$.
\vspace*{-0.25cm}
\item[(4)] If $L$ is of type $D''_2$ and $S\cap V(L)=\{u_2,f_1\}$,
then $w$ dominates $L_{PN}\cup \{f_2,w_0\}\cup N_1\cup N_{1,2}$.
\ee
\end{claim}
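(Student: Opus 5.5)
The general mechanism for all four parts is to use that $w\in W(S)$ means $w\nad S$, so $w$ misses both vertices that $S$ picks in $L$. Since $w\in W\subseteq N[U]$ whenever $w$ has a neighbour in $U$ (Proposition 1), and more generally whenever $w$ is adjacent to something of $L$, we then combine the maximality of $X$, the $P_6$-freeness, and the forbidden graphs $F_1$, $F_2$ to force the listed adjacencies. The recurring tool is to build a candidate induced $P_6$ through $w$, a vertex of $L$, and a vertex in another component $L''\in{\cal D}_w$. Such an $L''$ exists by Claim 1, and it is disjoint from $L$ and not adjacent to $L$ except via $W$. For instance, take $w\ad x''\in L''_X$ and a private neighbour $y''$ of $x''$ not adjacent to $w$, or a $U$-vertex of $L''$.

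\textbf{Part (1).} Here $w\nad\{y_1,y_2\}$. Since $\{y_1,y_2\}$ dominates $L$, every vertex of $L$ outside $\{y_1,y_2\}$ is adjacent to one of them.
\begin{enumerate}
\item Suppose $w\nad u$ for some $u\in L_U$, say $u\ad y_1$. Let $q$ be a vertex of $L$ adjacent to $w$; this will typically be $f_1$, $f_2$ or a $U$-vertex. A path $(u,y_1,f_1,w,x'',\cdot)$ or $(u,y_1,\dots)$ into $L''$ then gives a $P_6$.
\item To get $w\ad\{f_1,f_2\}$, we argue similarly, using the edge $f_1f_2$ and the fact that $w$ has at least two neighbours in $X$.
\item Once $w\ad\{f_1,f_2\}$ and $w\nad\{y_1,y_2\}$, note that $w\nad\hat N$ fails for type $D'$ by definition. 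We use $w$ in the role of $v$ from Lemma~\ref{l6} on a suitable subset. The first step of that lemma's proof, with $w$ and the vertices $u_1,u_2$, shows that $y_1\ad y_2$ would force a $P_6$ through the other component. So $y_1\nad y_2$, which gives $u_1\ad u_2$.
\item Finally, $N_1\ad N_2$ follows by the $F_1$-argument of Lemma~\ref{l6}. If $a\in N_1$ and $b\in N_2$ with $a\nad b$, then $(a,y_1,f_1,f_2,y_2,b)$ is an induced $P_6$, since $y_1\nad y_2$.
\end{enumerate}

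\textbf{Parts (2)--(4).} Here $S\cap V(L)=\{u_1,f_2\}$ or $\{u_2,f_1\}$, so $w\nad\{u_1,f_2\}$.
\begin{enumerate}
\item For $w\ad f_1$: otherwise $w$'s neighbour in $L$ would create an induced path $u_1,y_1,f_1,f_2$ extended by $w$ and $L''$ into a $P_6$.
\item For $w\ad L_{PN}$: take $y\in PN(f_i)$ with $y\nad w$ and the path $y,f_i,\dots$ through $w$ into $L''$.
\item For parts (3) and (4), $w\ad N_2\cup N_{1,2}$ follows because a non-neighbour $u\in N_2$ gives the path $(u,y_2,f_2,f_1,w,x'')$. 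It also uses $u\nad u_1$, which holds by Lemma~\ref{l6}(i) applied with care.
\item $w\ad w_0$ comes from the remark that $w_0$ has no neighbours outside $L$. Otherwise $(w_0,f_2,\dots)$ with $w$ and $L''$ produces $P_6$, or $\langle w_0,f_1,f_2,w,\dots\rangle$ yields $F_1$.
\end{enumerate}
Part (4) is symmetric to part (3).

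\textbf{Main obstacle.} The delicate point is choosing the vertex in $L''$ so that the six vertices are genuinely induced. This requires a neighbour of $w$ in $L''$ together with a non-neighbour of $w$ adjacent to it. I would get this by choosing $S$ on $L''$ cleverly and using that $w\in W(S)$ is nonadjacent to $S\cap V(L'')$, which supplies exactly such a pair, for example $f''_1\ad w$ and $y''_1\nad w$. Cases where $w$ sees all of $L''$ would be ruled out by the $D$-procedure steps (types D3--D24 were removed precisely for such $w$).
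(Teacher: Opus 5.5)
\paragraph{Framework.} Your overall framework is the paper's. You take $L''\in{\cal D}_w$ with $L''\neq L$ (Claim~1) and vertices $a,b\in V(L'')$ with $w\ad b\ad a$ and $w\nad a$, and then extend $a,b,w$ by three vertices of $L$ to an induced $P_6$. The vertices $a,b$ exist only because $L''$ is connected, meets $N(w)$, and $w\nad S\cap V(L'')$. No choice of $S$ is involved ($S$ is given) and no appeal to the D-procedure is needed.

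\paragraph{Part (1).} Your reason for $y_1\nad y_2$ is misattributed. The first step of Lemma~\ref{l6} proves the opposite, $y_1\ad y_2$, and it needs $v\nad\hat N$, which fails here, as you note yourself. The actual reason is that $\lz a,b,w,f_1,y_1,y_2\rz$ would be an induced $P_6$. You should also prove $w\ad\{f_1,f_2\}$ before $w\ad L_U$. If $w$ misses both $f_1,f_2$, its neighbour in $L_U$ or in $L_{PN}$ gives $(a,b,w,u,y_1,f_1)$ or $(a,b,w,y,f_i,f_{3-i})$. If $w$ sees only $f_1$, use $(a,b,w,f_1,f_2,y_2)$. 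Only then does $(a,b,w,f_1,y_1,u)$ give $w\ad L_U$.

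\paragraph{Parts (2)--(4): the genuine gap.} You never use the vertex $u_1\in S$, which $w$ misses, as an endpoint of the path, and that is exactly what makes these parts work.
\begin{itemize}
\item Your path for $w\ad f_1$, ``$u_1,y_1,f_1,f_2$ extended by $w$'', cannot be extended, because $w\nad\{u_1,f_2\}$ and these are its endpoints. You must split on the neighbour $q$ of $w$ in $L$.
\begin{itemize}
\item If $q\in PN(f_i)$, then $(a,b,w,q,f_i,f_{3-i})$ is an induced $P_6$.
\item If instead $w\nad L_{PN}$, then $q\in L_U$ with $q\ad y_i$, and $(a,b,w,q,y_i,f_i)$ is an induced $P_6$.
\end{itemize}
\item For $y\in PN(f_1)$ with $y\nad w$, your path $y,f_1,w,b,a$ has only five vertices. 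The sixth is $u_1$, which dominates $PN(f_1)$ by Lemma~\ref{l2} and satisfies $u_1\nad w$.
\item Your argument for $N_2\cup N_{1,2}$ in (3) fails outright. The path $(u,y_2,f_2,f_1,w,x'')$ is not induced, since you have just shown that $w$ dominates $L_{PN}\ni y_2$. Moreover, the fact ``$u\nad u_1$'' is the reverse of what Lemma~\ref{l6}(i) gives: in type $D''$, $u_1$ dominates $\hat N$, so $u\ad u_1$. That adjacency is precisely what is needed. For $u\in N_2\cup N_{1,2}$ with $u\nad w$, $\lz a,b,w,y_2,u,u_1\rz\cong P_6$, using $u_1\nad y_2$ and $w\nad u_1$.
\item Likewise, $w\ad w_0$ should come from the concrete path $(a,b,w,y_2,f_2,w_0)$, using $w_0\nad\{a,b,y_2\}$ and $w\nad f_2$. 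An unspecified $P_6$-or-$F_1$ alternative does not establish it.
\end{itemize}
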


\pf By Claim 1, there is an $M\in {\cal D}_w$. Let us take
vertices $a,b\in V(M)$ such that $\lz a,b,w\rz\cong P_3$. $\;$
Such vertices do exist, since $N(w)\cap V(M)\not=\es$ and $w\nad
S\cap V(M)$. 

(1) Suppose that $w\nad \{f_1,f_2\}$. Since $L\in
{\cal D}_w$, $w$ is adjacent to a vertex from $L_U\cup L_{PN}$. If
$w\ad u\in L_U$, then w.l.o.g. $u\ad y_1$ and $\lz
a,b,w,u,y_1,f_1\rz\cong P_6$. If $w\ad y\in PN(f_i)$, then $\lz
a,b,w,y,f_1,f_2\rz\cong P_6$. Thus, $N(w)\cap L_F\not=\es$,
w.l.o.g. $w\ad f_1$. We have $w\ad f_2$, for otherwise $\lz
a,b,w,f_1,f_2,y_2\rz\cong P_6$. Now assume that $w\nad u\in L_U$.
W.l.o.g., let $u\ad y_1$. We have $\lz a,b,w,f_1,y_1,u\rz\cong
P_6$, a contradiction. Hence $w$ dominates $L_U\cup \{f_1,f_2\}$.
Since $\lz a,b,w,f_1,y_1,y_2\rz\not\cong P_6$, we obtain $y_1\nad
y_2$. If $u\in N_1$, $v\in N_2$ and $u\nad v$, then $\lz
u,y_1,f_1,f_2,y_2,v\rz\cong P_6$. Hence $N_1\ad N_2$. 

(2) Suppose
that $w\nad f_1$. If $w\ad y\in L_{PN}$, then $\lz
a,b,w,y,f_1,f_2\rz\cong P_6$. Hence $w\nad L_{PN}$ and $w\ad u\in
L_U$. By Proposition 1, $u\ad y_i$ for some $i\in\{1,2\}$. We have
$\lz a,b,w,u,y_i,f_i\rz\cong P_6$, a contradiction. Thus, $w\ad
f_1$. Let $y\in PN(f_1)$. Since $u_1$ dominates $PN(f_1)$, we have
$\lz a,b,w,f_1,y,u_1\rz\cong P_6$ if $w\nad y$. Hence  $w$
dominates $PN(f_1)$. Finally, if $y\in PN(f_2)$ and $y\nad w$,
then $\lz a,b,w,f_1,f_2,y\rz\cong P_6$. Thus, $w$ dominates
$PN(f_2)$. 

(3) Analogously to (2), we can show that $w$ dominates
$L_{PN}\cup \{f_1\}$. We know that $w_0\nad \{a,b\}$. Hence $w\ad
w_0$, for otherwise $\lz a,b,w,y_2,f_2,w_0\rz\cong P_6$. Now
assume that $w\nad u$ where $u\in N_2\cup N_{1,2}$. We have
$u_1\ad u$, since $u_1$ dominates $\hat N$. Also, $u\ad y_2$,
since $u\in N_2\cup N_{1,2}$. Now $\lz a,b,w,y_2,u,u_1\rz\cong
P_6$. Hence $w$ dominates $N_2\cup N_{1,2}$. 

(4) This case is
symmetric to (3). \qed

Let $x,y$ be arbitrary vertices from $W_2$.
We will show that either ${\cal D}_x\cap {\cal D}_y=\es$ or one of these sets
is a subset of other.
Suppose to the contrary that $L,M\in {\cal D}_x$, $N\not\in {\cal D}_x$
and $M,N\in {\cal D}_y$, $L\not\in {\cal D}_y$.
Obviously, $\lz V(L)\cup \{x\}\rz$ contains $P_3=\lz a,b,x\rz$,
$\lz V(N)\cup \{y\}\rz$ contains $P_3=\lz y,c,d\rz$.
Let $P$ be a shortest $(x-y)$-path in $\lz V(M)\cup \{x,y\}\rz$.
It is easy to see that the union of the above paths contains an induced $P_6$,
a contradiction.
Hence either ${\cal D}_x\cap {\cal D}_y=\es$, or
${\cal D}_x\subseteq {\cal D}_y$, or
${\cal D}_x\supseteq {\cal D}_y$.
Thus, there exist pairwise distinct maximal (with respect to inclusion)
sets ${\cal D}_{w_1},...,{\cal D}_{w_m}$ ($w_i\in W_2$)
such that
${\cal D}_w\subseteq {\cal D}_{w_i}$
for any $w\in W_2$ and some $i\in \{1,...,m\}$.

\begin{claim}
For any $w\in W_2$, ${\cal D}_w$ contains a component of type $D'_2$ or $D''_2$.
\end{claim}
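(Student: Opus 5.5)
The plan is to argue by contradiction. Suppose some $w\in W_2$ has every component of ${\cal D}_w$ of type $D'_1$ or $D''_1$. For these two types the $\Delta$-set is forced on each component: it contains $\{y_1,y_2\}$ for type $D'_1$ and $\{u_1,f_2\}$ for type $D''_1$. Hence, for the $\Delta$-set $S$ with $w\in W(S)$, Claim 2 tells us exactly how $w$ meets each $L\in{\cal D}_w$:
\begin{itemize}
\item if $L$ has type $D'_1$, then $w$ dominates $L_U\cup\{f_1,f_2\}$, $w\nad\{y_1,y_2\}$, $y_1\nad y_2$ and $N_1\ad N_2$ (part (1));
\item if $L$ has type $D''_1$, then $w$ dominates $L_{PN}\cup\{f_1,w_0\}\cup N_2\cup N_{1,2}$ and $w\nad\{u_1,f_2\}$ (part (3)).
\end{itemize}
By Claim 1 there are two distinct components $L,L'\in{\cal D}_w$. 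The idea is to show that $w$ is then a witness that would have made the pair $L,L'$ (or one of them together with the other) receive a type among D3--D24 in the procedure above. Since ${\cal D}$ consists exactly of the components left after that procedure exits, this is a contradiction. Where the witness conditions are not literally met, I would instead produce an induced $P_6$ or one of $F_1$--$F_{11}$.

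I would split into three cases according to the types of $L$ and $L'$.

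\emph{Case 1: $L$ has type $D''_1$.} Then $w\nad\{y_1,f_2\}$, because $y_1\in L_{PN}$ is adjacent to $w$ only if $y_1\in S$, which it is not. Also, $w$ is adjacent to $f_1$, to $N_2$ and to all of $L'$'s relevant parts. This is essentially the configuration of step~1 (types D3/D4), after relabelling $f_1\leftrightarrow f_2$. The only extra requirement is that $w$ dominates all of $L'$. I would check this using Claim 2 applied to $L'$, and I would handle the vertices of $L'$ not covered by Claim 2 (namely $y'_1,y'_2$ when $L'$ is of type $D'_1$) by an induced path. For example, if $w\nad y'_1$, I would consider $\lz u_1,u_2,y_2,f_2,\dots\rz$-type paths through $w$ and $f'_1$, in the same way as the remark that $w_0$ has no neighbours outside its component.

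\emph{Case 2: $L$ has type $D''_1$ and $L'$ has type $D'_1$.} If Case 1 fails because $w\nad y'_1$ or $w\nad y'_2$, then the data of Claim 2 coincide with those of step~3 (types D7/D8), provided $N'_{1,2}\neq\es$. We already know $y'_1\nad y'_2$ from Claim 2(1). If $N'_{1,2}=\es$, then $L'_U=N'_1\cup N'_2$ with $N'_1\ad N'_2$. In that situation I would look for an induced $P_6$ of the form
\[
\lz u'_1,y'_1,f'_1,w,f_1,y\rz,\qquad y\in PN(f_1),
\]
or a small variation of it.

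\emph{Case 3: both $L$ and $L'$ have type $D'_1$.} Then $w\ad\{f_1,f_2,f'_1,f'_2\}$, $w\nad\{y_1,y_2,y'_1,y'_2\}$, and $w$ dominates $L_U\cup L'_U$. No pairing step of the procedure covers two $D'_1$ components whose witness is adjacent to the $f$'s, so here I must find a forbidden subgraph directly. The induced path $\lz y_1,f_1,w,f'_1,y'_1\rz\cong P_5$ is available. To extend it, I would use $y_1\nad y_2$ and the fact that $L$ is not of type $D'_2$: the set $\{u_1,f_2\}$ does not dominate $L$, which yields a vertex of $L_{PN}\cup L_U$ missed by $\{u_1,f_2\}$. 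Combining such vertices from $L$ and $L'$ with $w$ should give either an induced $P_6$ or, using $N_1\ad N_2$, one of $F_1,F_2$. I expect this case to be the main obstacle, since neither the procedure nor Claim 2 immediately excludes it, and a careful choice of the missed vertices will be required.
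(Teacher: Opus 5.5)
Your strategy of showing that $w$ would have triggered a pairing step of the procedure works in only one sub-case: a $D''_1$ component $L$ with a $D'_1$ component $L'$ having $N'_{1,2}\neq\es$, which gives D7/D8 exactly as in the paper. Every other case has a genuine gap.

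\textbf{Case 1 and the second half of Case 2.} Case~1 rests on a misreading. The condition $w\in W(S)$ says that $w$ has \emph{no} neighbour in $S$; it does not force $w\nad y_1$. In fact Claim~2(3), which you quote yourself, gives $w\ad L_{PN}$, so $w\ad y_1$ and $w\ad y_2$. Hence neither the step-1 condition $w\nad\{y_1,f_1\}$ nor its relabelled form $w\nad\{y_2,f_2\}$ can hold. Worse, steps D3--D6 require $w$ to dominate the partner $L'$. This is impossible for $L'\in{\cal D}_w$, because $w$ misses $S\cap V(L')$. For example, $w\nad y'_1$ always holds when $L'$ has type $D'_1$, so you cannot prove $w\ad y'_1$.

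No step pairs two $D''_1$ components, so that case needs a forbidden subgraph, which you do not supply. The paper gets the extra vertex from the fact that $L$ is not of type $D''_2$. Since $f_1$ covers $PN(f_1)\cup\{f_2\}$ and $u_2$ covers $PN(f_2)\cup N_1$, there is a $u\in N_2\cup N_{1,2}$ with $u\ad\{u_1,y_2\}$ and $u\nad u_2$. Take the analogous $u'$ in $L'$ and apply Claim~2(3). Then the 17 vertices $w,w_0,w'_0$, $f_1,f_2,y_1,y_2,u_1,u_2,u$ and their primed copies induce $F_7$, $F_8$ or $F_9$.

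Likewise, in Case~2 with $N'_{1,2}=\es$, your path $\lz u'_1,y'_1,f'_1,w,f_1,y\rz$ is not induced: $w\ad u'_1$ by Claim~2(1) and $w\ad y$ by Claim~2(3). What works is the following. Since $L'$ is not of type $D'_2$ and $N'_1\ad N'_2$, there is a $u'_3\in N'_1$ with $u'_3\nad u'_1$ and a $u'_4\in N'_2$ with $u'_4\nad u'_2$. Together with $u$, these give a 17-vertex induced subgraph isomorphic to $F_{10}$ or $F_{11}$.

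\textbf{Case 3.} You leave Case~3 open, and a forbidden subgraph confined to $L\cup L'\cup\{w\}$ is the wrong place to look. For instance, suppose $L_{PN}=\{y_1,y_2\}$ and $L'_{PN}=\{y'_1,y'_2\}$. Then the only non-neighbours of $w$ there are the four $y$'s, with $y_1\nad y_2$ and $y'_1\nad y'_2$, so every induced path through $w$ has at most five vertices.

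The missing idea is maximality of $X$. Since $w$ is adjacent to $L_U\subseteq U$, Proposition~1 gives an $x\in X$ with $PN(x)\subseteq N(w)$. This $x$ cannot lie in any $D'_1$ component of ${\cal D}_w$, because $w$ misses that component's $y_1,y_2$. Take $y\in PN(x)$ and $x'\ad x$ from Lemma~\ref{l1}. Since $\lz y_1,f_1,w,y,x,x'\rz$ cannot be an induced $P_6$, we get $w\ad x$ or $w\ad x'$. As $w\in W_1$, the component of $x$ then lies in ${\cal D}_w$ and so has type $D'_1$. This contradicts $w\nad\{y'_1,y'_2\}$, since $w$ dominates $PN(x)$ with $x\in\{f'_1,f'_2\}$.
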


\pf Assume to the contrary that ${\cal D}_w$ has components of types $D'_1$
and $D''_1$ only.
By Claim 1, $|{\cal D}_w|\ge 2$.
Firstly, suppose that ${\cal D}_w$ contains a component $L$ of type $D''_1$,
thus $w\nad \{u_1,f_2\}$.
By definition, $\{u_2,f_1\}$ does not dominate $L$, while
$f_1\ad PN(f_1)\cup \{f_2\}$ and $u_2\ad PN(f_2)\cup N_1$.
Therefore there is a $u\in N_{1,2}\cup N_2$ such that $u\ad \{u_1,y_2\}$,
$u\nad u_2$, and the edge $uy_1$ is undetermined.
If ${\cal D}_w$ contains another component $L'$ of type $D''_1$, then
$w\nad \{u'_1,f'_2\}$,
$\{u'_2,f'_1\}$ does not dominates $L'$ and we can show as above that
there is a vertex $u'\in L'_U$ with $u'\ad \{u'_1,y'_2\}$, $u'\nad u'_2$,
and the edge $u'y'_1$ is undetermined.
Applying Claim 2 (3) to $L$, $L'$ and $w$, we see that
$\lz w_0,f_1,f_2,y_1,y_2,u,u_1,u_2,w,w'_0,f'_1,f'_2,y'_1,y'_2,u',u'_1,u'_2
\rz\cong F_7$, $F_8$, or $F_9$, a contradiction.
Now let $L'$ be of type $D'_1$.
By Claim 2 (1,3), $w$ dominates $L_{PN}\cup \{f_1,w_0\}\cup N_2\cup N_{1,2}
\cup L'_U\cup \{f'_1,f'_2\}$, $w\nad \{u_1,f_2,y'_1,y'_2\}$, and $y'_1\nad y'_2$.
If $N'_{1,2}\not=\es$, then $L$ has type D7 and $L'$ is of type D8, a contradiction.
Hence $N'_{1,2}=\es$.
By definition, neither $\{u'_1,f'_2\}$ nor $\{u'_2,f'_1\}$  dominate $L'$.
Moreover, by Claim 2 (1), $N_1\ad N_2$.
The only possibility is that $u'_1$ does not dominate $N'_1$ and $u'_2$
does not dominate $N'_2$.
Thus, there is a $u'_3\in N'_1$ not adjacent to $u'_1$ and there is
an $u'_4\in N'_2$ not adjacent to $u'_2$.
Now
$\lz w_0,f_1,f_2,y_1,y_2,u,u_1,u_2,w,f'_1,f'_2,y'_1,y'_2,u'_1,u'_2,u'_3,u'_4
\rz\cong F_{10}$ or $F_{11}$, a contradiction.
Thus, all components of ${\cal D}_w$ have type $D'_1$.
Let $L\in{\cal D}_w$.
By Claim 2 (1), $w\ad L_U$, and by Proposition 1, $w$ dominates $PN(x)$.
Since $w\nad \{y_1,y_2\}$, we obtain $x\not\in L_X$ and $x\in L'_X$.
By Lemma \ref{l1}, $x\ad x'\in L'_X$.
Now $\lz y_1,f_1,w,y,x,x'\rz\not\cong P_6$ where $y\in PN(x)$.
Hence $w\ad x$ or $w\ad x'$, i.e., $L'\in {\cal D}_w$ and $L'$ has type $D'_1$.
This is a contradiction, since
$w$ dominates $PN(x)$ for $x\in \{f'_1,f'_2\}$ and
$w\nad \{y'_1,y'_2\}\subset S.$ \qed

The sets ${\cal D}_{w_i}$ and ${\cal D}_{w_j}$ have empty intersection for any
$i\not=j$. Hence, it is not difficult to construct a $\Delta$-set $S$
for which $\{w_1,...,w_m\}\subseteq W(S)$.
Now we form a $\Delta$-set $S'$ from $S$ as follows.
Firstly, put $S'=S$.
Further, for any $i\in \{1,...,m\}$ and for any component $L\in {\cal D}_{w_i}$ of types
$D'_2$ and $D''_2$ we replace a 2-vertex dominating set $S'\cap V(L)$ by another
2-vertex dominating set.
More precisely, if $L$ has type $D'_2$ and $\{y_1,y_2\}\in S$, then
we delete this set from $S'$ and add $\{u_1,f_2\}$ in $S'$, and
vice versa. In the same way we exchange the dominating sets $\{u_1,f_2\}$
and $\{u_2,f_1\}$ for components of type $D''_2$.
Since $W(S)\not=\es$ for any $\Delta$-set $S$, there is a $w'\in W(S')$.
By Claim 3, ${\cal D}_{w_i}$ contains a component of type $D'_2$ or $D''_2$ for
any $i\in \{1,...,m\}$.
It follows from Claim 2 and from the definition of $S'$ that
$w_i$ is adjacent to a vertex of $S'$ for any $i\in \{1,...,m\}$.
Thus, $w'\not\in\{w_1,...,w_m\}$.
Since ${\cal D}_{w_i}$ are maximal sets, we conclude that
for some $w\in\{w_1,...,w_m\}$,
$$
{\cal D}_{w'}\subseteq {\cal D}_w.
$$

Firstly, suppose that ${\cal D}_{w'}$ contains a component $L$ of type $D''_1$.
By Claim 2 (3), both $w$ and $w'$ dominate
$L_{PN}\cup \{f_1\}\cup N_2\cup N_{1,2}$ and $\{w,w'\}\nad \{u_1,f_2\}$.
By Claim 3, ${\cal D}_{w'}$ has a component $L'$ of type $D'_2$ or $D''_2$.
Let $L'$ be of type $D'_2$.
W.l.o.g., let $w\nad\{y'_1,y'_2\}$ and $w'\nad\{u'_1,f'_2\}$.
By Claim 2 (1), $w$ dominates $L'_U\cup\{f'_1,f'_2\}$, and by Claim 2 (2),
$w'$ dominates $L'_{PN}\cup \{f'_1\}$.
Therefore $L$ has type D9 and $L'$ has type D10, contrary to the
definition of ${\cal D}$.
Now let $L'$ be of type $D''_2$.
W.l.o.g., let $w\nad \{u'_1,f'_2\}$ and $w'\nad \{u'_2,f'_1\}$.
By Claim 2 (3), $w$ dominates $L'_{PN}\cup \{f'_1\}\cup N'_2\cup N'_{1,2}$.
By Claim 2 (4), $w'$ dominates $L'_{PN}\cup \{f'_2\}\cup N'_1\cup N'_{1,2}$.
Hence $L$ has type D11 and $L'$ has type D12, a contradiction.

Therefore, we may assume that neither $L$ nor $L'$ has type $D''_1$.
Suppose that $L$ is of type $D''_2$.
W.l.o.g., let $w\nad \{u_1,f_2\}$ and $w'\nad \{u_2,f_1\}$.
By Claim 2 (3,4), $w$ dominates $L_{PN}\cup \{f_1\}\cup N_2\cup N_{1,2}$
and $w'$ dominates $L_{PN}\cup \{f_2\}\cup N_1\cup N_{1,2}$.
There are three cases to consider.
Let $L'$ be of type $D'_1$.
By Claim 2 (1), $\{w,w'\}\ad L'_U\cup\{f'_1,f'_2\}$ and $\{w,w'\}\nad\{y'_1,y'_2\}$.
Hence $L$ has type D13 and $L'$ has type D14, a contradiction.
Let $L'$ be of type $D'_2$.
W.l.o.g., let $w\nad\{y'_1,y'_2\}$ and $w'\nad\{u'_1,f'_2\}$.
By Claim 2 (1,2), $w$ dominates $L'_U\cup\{f'_1,f'_2\}$ and
$w'$ dominates $L'_{PN}\cup \{f'_1\}$.
Therefore $L$ has type D15 and $L'$ has type D16, a contradiction.
Finally, let $L'$ be of type $D''_2$.
W.l.o.g., let $w\nad \{u'_1,f'_2\}$ and $w'\nad \{u'_2,f'_1\}$.
By Claim 2 (3), $w$ dominates $L'_{PN}\cup \{f'_1\}\cup N'_2\cup N'_{1,2}$.
By Claim 2 (4), $w'$ dominates $L'_{PN}\cup \{f'_2\}\cup N'_1\cup N'_{1,2}$.
Hence $L$ has type D17 and $L'$ has type D18, a contradiction.

Thus, ${\cal D}_{w'}$ contains components of types $D'_1$ and $D'_2$ only.
By Claim 3, ${\cal D}_{w'}$ has a component $L'$ of type $D'_2$.
There are two cases to consider.
Firstly, let $L$ be of type $D'_1$, and hence $\{w,w'\}\nad \{y_1,y_2\}$.
By Claim 2 (1), $\{w,w'\}\ad L_U\cup \{f_1,f_2\}$.
W.l.o.g., let $w\nad\{y'_1,y'_2\}$ and $w'\nad\{u'_1,f'_2\}$.
By Claim 2 $(1,2)$, $w$ dominates $L'_U\cup\{f'_1,f'_2\}$ and
$w'$ dominates $L'_{PN}\cup \{f'_1\}$.
Therefore, $L$ has type D19 and $L'$ has type D20, a contradiction.
Finally, let both $L$ and $L'$ be of type $D'_2$.
Let $w\nad \{u_1,f_2\}$ and $w'\nad \{y_1,y_2\}$.
By Claim 2 (1), $w'$ dominates $L_U\cup \{f_1,f_2\}$, and
by Claim 2 (2), $w$ dominates $L_{PN}\cup \{f_1\}$.
We consider two subcases.
Let $w\nad\{y'_1,y'_2\}$ and $w'\nad\{u'_1,f'_2\}$.
By Claim 2 $(1,2)$, $w$ dominates $L'_U\cup\{f'_1,f'_2\}$ and
$w'$ dominates $L'_{PN}\cup \{f'_1\}$.
Hence $L$ has type D21 and $L'$ has type D22, a contradiction.
Finally, let $w'\nad\{y'_1,y'_2\}$ and $w\nad\{u'_1,f'_2\}$.
By Claim 2 $(1,2)$, $w'$ dominates $L'_U\cup\{f'_1,f'_2\}$ and
$w$ dominates $L'_{PN}\cup \{f'_1\}$.
Hence $L$ has type D23 and $L'$ has type D24, a contradiction.
The proof of Lemma \ref{delta} is complete. \qed

Now we subdivide the types F and G into subtypes.
The sets $N_1, N_2, N_{1,2}$ used here are defined by (1).

{\bf Type F1.} $L_Z=\{z\}$, $y_1$ dominates $PN(z)$, and there is
no vertex $w\in W$ dominating $L_U\cup PN(z)$.
Also, there is no $u\in N_{1,2}$ with $u\ad u_1$.
Put $D=\{y_1,y_2,f_2\}\subseteq I$.
Obviously, $D$ dominates $L$ and $|D|=3=|L_X|$.

{\bf Type F2.} $L_Z=\{z\}$, $y_1$ dominates $PN(z)$, and there is
no vertex $w\in W$ dominating $L_U\cup PN(z)$.
Also, there is a $u\in N_{1,2}$ with $u\ad u_1$.
Put $D=\{y,y_1,y_2\}\subseteq I$ where $y\in PN(z)$.
Observe that $D$ dominates $L-PN(f_2)$.
Each vertex $g\in PN(f_2)$ is adjacent to $u_2$ and,
by Proposition 1, $g$ dominates $PN(x)$ for $x\in L_X$.
Hence $PN(f_2)$ is dominated by $D$.
Thus, $D$ dominates $L$ and $|D|=3=|L_X|$.

{\bf Type F3.} $L_Z=\{z\}$, $y_1$ dominates $PN(z)$, and there is
a $w\in W$ dominating $L_U\cup PN(z)$.
Put $D=\{f_1,f_2,w\}\subseteq I$.
Clearly, $D$ dominates $L$ and $|D|=3=|L_X|$.

{\bf Type F4.} $L_Z=\{z\}$ and $y_1$ does not dominate $PN(z)$,
i.e., there is a $y\in PN(z)$ with $y\nad y_1$. Since
$\{u_1,y_1\}$ dominates $PN(z)$, we obtain $u_1\ad y$. Put
$D=\{y,y_1,y_2\}\subseteq I$. We know that $D$ dominates $L_U\cup
PN(f_1)\cup L_X$. Any vertex $g\in PN(f_2)$ is adjacent to $u_2$
and, by Proposition 1, $g$ dominates $PN(x)$ for some $x\in L_X$.
Therefore, $D$ dominates $PN(f_2)$. Suppose now that $D$ does not
dominate $PN(z)$, i.e., there is a $g\in PN(z)$ and $g\nad y_1$.
We have $g\ad u_1$, since $\{u_1,y_1\}$ dominates $PN(z)$. By
Proposition 1, $g$ dominates $PN(x)$ for $x\in L_X$, i.e., $g$ is
adjacent to $y,y_1$ or $y_2$, a contradiction. Thus, $D$ dominates
$L$ and $|D|=3=|L_X|$.

{\bf Type F5.} $|L_Z|\ge 2$.
Put $D=\{u_1,y_1,y_2,f_2\}\subseteq I$.
We know that $\{u_1,y_1\}$ dominates $PN(z)$ for any $z\in L_Z$.
Also, $\{y_1,y_2\}$ dominates $L_U\cup PN(f_1)$
and $f_2$ dominates \mbox{$PN(f_2)\cup L_X$.}
Thus, $D$ dominates $L$ and $|D|=4\le |L_X|$.

{\bf Type G1.} The set $\{y_1,y_2\}$ does not dominate $PN(f_3)$.
Let $y_3\in PN(f_3)$ and $y_3\nad \{y_1,y_2\}$.
Put $D=\{y_1,y_2,y_3\}\subseteq  I$.
Obviously, $D$ dominates $L_U\cup L_X$.
Any vertex $g\in L_{PN}$ is adjacent to $u\in L_U$.
By Proposition 1, $g$ dominates $PN(x)$ for some $x\in L_X$,
i.e., $g$ is dominated by $D$.
Thus, $D$ dominates $L$ and $|D|=|L_X|$.

{\bf Type G2.} The set $\{y_1,y_2\}$ dominates $PN(f_3)$
but it does not dominate $L_U$.
Put $D=\{y_1,y_2,y_3\}\subseteq  I$, where $y_3\in PN(f_3)$.
By the same argument as for the type G1, $D$ dominates $L$ and $|D|=|L_X|$.

{\bf Type G3.} The set $\{y_1,y_2\}$ dominates $PN(f_3)\cup L_U$,
and there is no $w\in W$ dominating $PN(f_3)\cup L_U$.
Also, there is no $u\in N_{1,2}$ with $u\ad u_1$.
Put $D=\{y_1,y_2,f_2\}\subseteq  I$.
Obviously, $D$ dominates $L$ and $|D|=|L_X|$.

{\bf Type G4.} The set $\{y_1,y_2\}$ dominates $PN(f_3)\cup L_U$,
and there is no $w\in W$ dominating $PN(f_3)\cup L_U$.
Also, there is a $u\in N_{1,2}$ with $u\ad u_1$.
Put $D=\{y_1,y_2,y_3\}\subseteq  I$ where $y_3\in PN(f_3)$.
By the same argument as in the type G1,
$D$ dominates $L$ and $|D|=|L_X|$.

{\bf Type G5.} The set $\{y_1,y_2\}$ dominates $PN(f_3)\cup L_U$,
and there is a $w\in W$ dominating $PN(f_3)\cup L_U$.
Put $D=\{f_1,f_2,w\}\subseteq  I$.
Clearly, $D$ dominates $L$ and $|D|=|L_X|$.

\begin{lem}
\label{l8}
There exists a vertex $w^*\in W-W_1$ such that $w^*\nad I$,
the set $X^*=N(w^*)\cap X$ contains a vertex of a component of type
different from D25, and  $|X^*|\ge 2$.
\end{lem}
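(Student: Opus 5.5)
The argument compares cardinalities and then locates the undominated vertex. By construction every piece $D$ of $I$ satisfies $|D|\le |L_X|$, where the bound is taken over the components $L$ it is charged to. The type B3 and D1--D24 pieces are charged to pairs of components and satisfy $|D|\le|L_X|+|L'_X|$. The $\Delta$-set $S$ satisfies $|S|=|\bigcup_{L\in{\cal D}}L_X|$. Since the $L_X$ partition $X$, this gives $|I|\le |X|=ir(H)<\gamma(H)$. Hence $I$ is not dominating, and we fix a vertex $w^*\in V(H)$ with $w^*\nad I$ and $w^*\notin I$.

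The first step is to show that $I$ dominates $H-W$, so that $w^*\in W$. For each type, the text after its definition already records this, sometimes only jointly with a partner component, namely for the pairs B2/B3, B4/D1, B5/D2 and D3--D24. For D25, Lemma \ref{delta} gives it. Thus $I$ dominates every component of $H-W$, and $w^*\in W$.

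The second step places $w^*$ outside $W_1$. Lemma \ref{delta} says $S\subseteq I$ dominates $W_1$, so $w^*\in W-W_1$. By the definition of $W$, we have $|X^*|=|N(w^*)\cap X|\ge 2$. Since $w^*\notin W_1$, the set $X^*$ is not contained in $\bigcup_{L\in{\cal D}}L_X$. Therefore $X^*$ contains a vertex of some component whose type is not D25, which is everything the lemma claims.

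The main obstacle is the first step: checking carefully that every vertex of every component of $H-W$ really is dominated by $I$. The delicate cases are the paired types, where a component such as B3, D1, D2, D4 or D6 is dominated only with help from the partner's vertex $w\in I$. I will verify these cases one by one against the domination statements given with their definitions, and confirm that the procedures assign each component exactly one type. The counting $|I|\le|X|$ also requires that the paired pieces were charged to two components whose $L_X$ sum to $4$; this holds since D-type components have $|L_X|=2$ and B-type charges are $|L_X|$. Everything else is bookkeeping already done in the type definitions.
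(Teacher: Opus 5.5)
Your proposal is correct and is essentially the paper's own argument: the construction gives $|I|\le|X|=ir(H)<\gamma(H)$ and $I$ dominates $(H-W)\cup W_1$ (the latter via Lemma~\ref{delta}), so some vertex $w^*$ not dominated by $I$ exists and must lie in $W-W_1$, and then $w^*\notin W_1$ together with the definition of $W$ yields the remaining two properties. Your extra bookkeeping on how the pieces of $I$ are charged to components, including the paired types whose pieces have size at most $|L_X|+|L'_X|$, just makes explicit what the paper compresses into ``it follows from the construction of the set $I$''.
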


\pf It follows from the construction of the set $I$ that $|I|\le
|X|$, and $I$ dominates $(H-W)\cup W_1$. We obtain $|I|\le
|X|=ir(H)<\gamma(H)$. Consequently, $I$ does not dominate $H$.
Thus, there is a vertex $w^*\in W-W_1$ such that $w^*\nad I$.
Since $w^*\not\in W_1$, $w^*$ is adjacent to a vertex of a
component of type different from D25. Moreover, by the
definition of $W$, $|X^*|\ge 2$. \qed \vspace{-0.3cm}

\begin{lem}
\label{l9}
The set $X^*$ contains no vertex from a component of type D1--D24.
\end{lem}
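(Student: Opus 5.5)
We argue by contradiction. Suppose $X^*$ contains a vertex of a component $L$ of one of the types D1--D24. Every such $L$ was paired in the procedure with a partner component $L'$; in each pairing the chosen set $D\subseteq I$ contains a vertex $w\in W$ (types D3--D12) or a pair $w,w'\in W$ (types D13--D24). In every case $D$ dominates $L\cup L'$. Types D1 and D2 are the exception: the dominating vertex $w$ is placed in $I$ while processing the partner component of type B4 or B5. The plan is to show that $w^*\ad X^*\cap V(L)$ and $w^*\nad I$ together produce an induced $P_6$ or one of $F_1$--$F_{11}$.

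\textbf{Step 1: locate $w^*$ relative to $L$.} Since $w^*\nad I$, $w^*$ misses every vertex of $D$. For types D3--D24, $D$ contains $w$ (and possibly $w'$), together with vertices such as $y_1,f_1,f_2,f'_1,f'_2,u$. For each possible neighbour $x\in X^*\cap V(L)\subseteq\{f_1,f_2\}$ (or the primed versions), we use $w^*\ad x$, $w^*\nad D$, and the known adjacencies of $w$ to build an induced path. A typical candidate is $\lz w^*,x,y_i,u_i,\dots\rz$ extended through $w$ into the partner component. The same mechanism appears in Claim 2 of Lemma \ref{delta}.

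\textbf{Step 2: use that $|X^*|\ge 2$ and that $X^*$ meets another component.} Lemma \ref{l8} guarantees a second neighbour $x'\in X^*$. If $x'$ lies in a different component $M$, take $a,b\in V(M)$ with $\lz a,b,w^*\rz\cong P_3$; these exist because $w^*$ misses $D\cap V(M)$, which dominates $M$, or misses the relevant part of $M$'s own set in $I$. We then get an induced path $a,b,w^*,x,\dots$ of length 6 inside $L$ using $y_i,u_i$, exactly as in Claim 2(1)--(3). This forces $w^*$ to be adjacent to specific vertices of $D$, which contradicts $w^*\nad I$.

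If instead $X^*\subseteq V(L)$, then $w^*\ad\{f_1,f_2\}$. In that case $w^*$ behaves like the vertex $w_0$ of type $D''$. The remark after the definition of $D''$ shows that such a $w_0$ has no neighbour outside $L$, so $w^*\in W_1$, contradicting Lemma \ref{l8}. If $L$ was of type $D'$, Proposition 1 applies to $w^*\ad u\in L_U$ (which must hold by type $D'$) and yields another $x\in X^*$ outside $L$, which reduces to the previous case.

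\textbf{Step 3: the D1/D2 cases.} Here $w$ dominates $L'_U\cup\{f_1,f_2\}$ (or $L'_{PN}\cup N'_2\cup\{f_1\}$). We use $w^*\nad\{w,f_1,f_2\}$ (respectively $w^*\nad\{w,f_1,y_1\}$). Combined with $w^*$ being adjacent to some vertex of $L'$, this contradicts $X^*\cap V(L')\neq\es$ in the D1 case, since $X^*\subseteq X$ and $L'_X=\{f_1,f_2\}\subseteq I$. In the D2 case, $w^*\ad f_2$ is the only option, and the path $w^*,f_2,\dots$ through $u_1$ or $y_2$ to $w$ gives an induced $P_6$.

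\textbf{Main obstacle.} The hardest part will be the bookkeeping in Step 2 for the paired types D7--D24. There the non-adjacencies of $w^*$ to both $w$ and $w'$ must be turned into an induced $P_6$, and the undetermined edges of $N_1,N_2,N_{1,2}$ may instead force one of $F_7$--$F_{11}$. I would first try the uniform path $\lz w^*,f,y,u,\dots\rz$ for $f\in X^*\cap L_X$ and handle the leftover configurations individually.
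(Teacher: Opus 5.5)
There is a genuine gap. The proposal is a template rather than a proof: the whole content of the lemma is the type-by-type analysis, which you postpone to ``handle the leftover configurations individually''. The one concrete shortcut you offer is also wrong.

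In Step~2 you argue that if $X^*\subseteq V(L)$, i.e.\ $X^*=\{f_1,f_2\}$, then $w^*$ behaves like $w_0$, hence $w^*\in W_1$, contradicting Lemma~\ref{l8}. This fails for three reasons.
\begin{itemize}
\item $W_1$ is defined with respect to the \emph{final} ${\cal D}$, i.e.\ components of type D25 only. So a vertex whose $X$-neighbourhood is $L_X$ for $L$ of type D1--D24 is never in $W_1$.
\item For $L$ of types D4, D6, D8, D10, D12, D13, D15, D17, D20, D22, D24 one has $L_X\cap I=\es$, so $X^*=L_X$ is not excluded by $w^*\nad I$ either. This is exactly the situation handled in the paper's Claims~4 and~5.
\item You cannot transfer the $w_0$ properties to $w^*$. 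For these types $y_1,y_2\notin I$, so $w^*\nad L_U\cup\{y_1,y_2\}$ is unavailable. The remark about $w_0$ also needs $u_1\ad u_2$, which is guaranteed only for type $D''$. Your $D'$ branch needs $w^*\nad\{y_1,y_2\}$ to get $w^*\ad u\in L_U$, and even then Proposition~1 only says $w^*$ dominates some $PN(x)$, not that $w^*\ad x$.
\end{itemize}
Your D2 sketch is incomplete as well. A path $w^*,f_2,y_2,w,f,z$ needs $w^*\nad\{y_2,z\}$, which you do not have. The paper instead splits on whether $w^*\ad z$, using $\lz w^*,f_2,f_1,w,f,z\rz$ and then $\lz f,z,w^*,f_2,f_1,y_1\rz$.

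More fundamentally, you are missing the mechanism that closes many of the cases. Step~2 ends with ``this forces $w^*$ to be adjacent to vertices of $D$, contradicting $w^*\nad I$''. But for D9 or D11 with $w^*\ad f_2$, and for D13 or D20 with $w^*$ adjacent to both vertices of $M_X$, the $P_6$ analysis yields no adjacency to $I$. Instead it shows that $w^*$ dominates one component of the pair, together with $N_2$ (and $y_1$ in the D5/D6 situation) of the other. Then $w^*$ would have witnessed an earlier step of the pairing procedure (D3/D4 or D5/D6). The contradiction is with the order of the greedy procedure, not with $w^*\nad I$.

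Similarly, for D10, D20, D22, D24 with $|X^*\cap M_X|=1$, the paper uses an induced path such as $y'_2,f'_2,f'_1,w^*$ inside $M\cup\{w^*\}$. From this it shows $w^*$ dominates the other component $R$ it meets, so $|X^*\cap R_X|=2$, which the two-neighbour case excludes. This runs in the opposite direction to your $a,b\in V(M)$ construction.

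The Claim~2 template you invoke does not transfer here. There, $w$ avoids a $2$-vertex dominating set of $L$; here $I\cap V(L)$ may be empty (D4, D13) or just $\{u\}$ with $u\in N'_{1,2}$ (D8). Also note that the configurations arising in this lemma are $P_6$, $F_1$ and $F_2$, not $F_7$--$F_{11}$.

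What is needed is the paper's three-stage argument:
\begin{itemize}
\item Claim~4: exclude types with $M_X\subseteq I$, and the remaining listed types via $P_6$'s through the partner component and $w,w'$.
\item Claim~5: handle $|X^*\cap M_X|=2$ with $P_6$, $F_1$, $F_2$ and the procedure order.
\item Reduce the one-neighbour case to Claim~5.
\end{itemize}
Beyond the D1 exclusion, your outline does not supply these steps.
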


\pf
Suppose to the contrary that $X^*$ contains a vertex of a component $M$
having one of the types D1--D24.
The further proof is based on the next claims.

\begin{claim}
If $X^*\cap M_X\not=\es$, 
then $M$ cannot be of type
D1--D9, D11, D14, D16, D18, D19, D21 or D23.
\end{claim}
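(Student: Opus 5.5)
The plan is to treat each listed type in turn. Throughout, use two facts from Lemma \ref{l8}: $w^*\nad I$ and $|X^*|\ge 2$. Also use that distinct components of $H-W$ are mutually non-adjacent. The rough split is this. For some listed types the set $D$ put into $I$ already contains every vertex of $M_X$, so $X^*\cap M_X\not=\es$ contradicts $w^*\nad I$ outright. For the remaining types $D$ misses part of $M_X$, and I will aim for an induced $P_6$ (or, failing that, one of $F_1$--$F_{11}$) through $w^*$.

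\emph{Types decided directly.} If $M$ has type D1, then $D=\{f_1,f_2\}=M_X$. If $M$ has type D13, D15 or D17, then its partner $L'$ has type D14, D16 or D18, and $D=\{w,w',f'_1,f'_2\}$. If $M$ has type D19, D21 or D23, then $D=\{f_1,f_2,w,w'\}$. In each of these cases $M_X\subseteq I$, so $X^*\cap M_X=\es$, a contradiction.

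\emph{The generic configuration for the other types.} Suppose $D\cap M_X$ is a single vertex, or is empty but $D$ contains a vertex $w\in W$ adjacent to $M_X$. Since $|X^*|\ge 2$, I will locate a second component $K$ with $N(w^*)\cap V(K)\not=\es$. Then I will produce $a,b\in V(K)$ with $\lz a,b,w^*\rz\cong P_3$, exactly as at the start of the proof of Claim 2. Such a pair exists because $K$ is connected and the part of $I$ inside $K$ dominates $K$, while $w^*\nad I$; hence $w^*$ does not dominate $K$. The path is then extended on the $M$ side by three vertices:
\begin{itemize}
\item Type D2: $w^*\ad f_2$, $w^*\nad\{f_1,y_1\}$; use $\lz a,b,w^*,f_2,f_1,y_1\rz\cong P_6$.
\item Types D3, D5, D7, D9 (pair sets $\{y_1,f_1,\ldots\}$): $w^*\nad\{f_1,f_2\}$, so $w^*$ meets $M$ only through other vertices of $X$, which is impossible.
\item Type D11: handled the same way as D3.
\item Types D4, D6, D8, D14, D16, D18: the vertex $w$, and where relevant $u\in N'_{1,2}$, are in $I$ and dominate $M$. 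Here I use paths of the form $a,b,w^*,f'_i,y'_i,\ldots$, ending in $u'_i$ or in the other endpoint in $M$ that is non-adjacent to $w$.
\end{itemize}
In each case non-adjacency of $a,b$ to the last three vertices is automatic, because those vertices lie in a different component of $H-W$.

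\emph{Main obstacle.} The hard step is to check non-adjacency between $a,b$ and any $W$-vertex used in the path, and to choose $K$ so that the $P_3$ at $w^*$ does not interfere with the $M$-side. When a chord appears I will first try re-choosing $a,b$ along a shortest path inside $K$ starting from a neighbour of $w^*$. If that fails, I will fall back, as in Lemma \ref{l6}, on identifying the resulting 7–9 vertex configuration as $F_1$, $F_2$ or one of $F_3$--$F_6$.
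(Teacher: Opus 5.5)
Your proposal has a genuine gap. It comes from a misreading of which vertices the construction puts into $I$, and that misreading hides exactly the substantive cases.

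\textbf{The bookkeeping of $I$.} For a component of type D13 (D15, D17), the set $\{w,w',f'_1,f'_2\}$ contains the $X$-vertices of the \emph{partner}. So it is the D14, D16 and D18 components that satisfy $M_X\subseteq I$, which is why the claim lists them; D13, D15 and D17 are not in the claim at all. That slip is harmless. The serious one concerns D7, D9 and D11. There the set added to $I$ is $\{y_1,f_1,w,u\}$ or $\{y_1,f_1,w,w'\}$, so $f_2\notin I$ and $w^*\ad f_2$ is perfectly possible. Your assertion $w^*\nad\{f_1,f_2\}$ for these types is therefore false. The genuinely immediate types are D1, D3, D5, D14, D16, D18, D19, D21 and D23. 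Each of D2, D4, D6, D7, D8, D9 and D11 needs a real argument.

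\textbf{The generic $P_3$ step.} You justify the existence of $a,b$ by saying that the part of $I$ inside the second component $K$ dominates $K$. This is false for components of types D4, D6, D10, D12, D13, D15, D17, D20, D22 and D24. These contain no vertex of $I$; they are dominated by $w$, $w'$ or the partner's vertices. So $w^*$ may dominate $K$, and then no such $a,b$ exist. Moreover, $|X^*|\ge 2$ does not even give a second component when both vertices of $M_X$ lie outside $I$, as in D4, D6 and D8.

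\textbf{Where this bites: D9 and D11.} In the D9 case with $w^*\ad f_2$, the remaining neighbours of $w^*$ in $X$ may lie only in the D10 partner $M'$. The paper shows that $w^*$ is then forced to dominate $M'$, so no $P_3$ is available.

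The missing idea is to work with the designated partner and the $W$-vertices $w,w'$. These lie in $I$, hence $w^*\nad\{w,w'\}$. The argument runs as follows:
\begin{itemize}
\item $\lz w^*,f_2,f_1,w',f'_1,f'_2\rz$ forces $w^*\ad f'_1$ or $w^*\ad f'_2$;
\item $\lz y_1,f_1,f_2,w^*,x,y\rz$ forces $w^*$ to dominate $M'$;
\item $\lz f'_2,w^*,f_2,f_1,w',x\rz$ forces $w^*$ to dominate $N_2$.
\end{itemize}
The contradiction is then not a forbidden subgraph but the order of the procedure. The vertex $w^*$ would be a valid witness in step~1, so $(M,M')$ would already have been classified as D3/D4. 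D11 is handled identically.

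\textbf{The other cases.} These are also done with the partner, not an arbitrary $K$:
\begin{itemize}
\item D4: $\lz y_1,f_1,f_2,w,x,w^*\rz\cong P_6$.
\item D2: use the B5 partner. $\lz w^*,f_2,f_1,w,f,z\rz$ forces $w^*\ad z$, and then $\lz f,z,w^*,f_2,f_1,y_1\rz\cong P_6$. This is your D2 path, with $a,b$ supplied by $f,z$.
\item D7: $\lz w^*,f_2,f_1,w,u',y'_1\rz$ forces $w^*\ad y'_1$, and then $\lz y_1,f_1,f_2,w^*,y'_1,u'\rz\cong P_6$.
\end{itemize}
Your sketches for D4, D6 and D8, and the unspecified fallback to $F_1$--$F_6$, do not supply these arguments.
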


\pf
Since $X^*\cap I=\es$, $M$ cannot be of types D1, D3, D5, D14, D16, D18,
D19, D21, D23. Next, we examine the remaining cases individually and derive a contradiction in each case.
\smallskip

\noindent
(1)
If $M$ is of type D2, then $w^*\ad f_2$ and there are the
corresponding component $L$ of type B5 and a vertex $w$ such that
$w\ad f\in V(L)$, $w\nad z\in V(L)$, $w\ad f_1\in V(M)$ and $w\nad
f_2\in V(M)$. We obtain $w^*\ad z$, for otherwise $\lz
w^*,f_2,f_1,w,f,z\rz\cong P_6$. Now $\lz
f,z,w^*,f_2,f_1,y_1\rz\cong P_6$, a contradiction.
\smallskip

\noindent
(2)
Let $M$ be of type D4 and $w^*\ad x\in M_X$.
By definition, there are a component $L$ of type D3 with $y_1,f_1,f_2\in V(L)$ and a
$w\in W$ such that $w^*\nad \{y_1,f_1,f_2,w\}$.
Now $\lz y_1,f_1,f_2,w,x,w^*\rz\cong P_6$, a contradiction.
We can show analogously that $M$ cannot be of type D6.
\smallskip

\noindent
(3)
Let $M$ be of type D7, $w^*\ad f_2$, and $M'$ be the corresponding
component of type D8 and $u'\in N'_{1,2}$. Since $\lz
w^*,f_2,f_1,w,u',y'_1\rz\not\cong P_6$, we obtain $w^*\ad y'_1$.
Now $\lz y_1,f_1,f_2,w^*,y'_1,u'\rz\cong P_6$, a contradiction.
\smallskip

\noindent
(4)
Let $M$ be of type D8, $u'\in N'_{1,2}$, and $L$ be the corresponding component
of type D7.
We have proved above that $w^*\nad f_2$.
Suppose w.l.o.g. that $w^*\ad f'_1$.
We have $w^*\nad y'_1$, for otherwise $\lz w^*,y'_1,u',w,f_1,f_2\rz\cong P_6$.
Analogously, $w^*\nad y'_2$.
By definition, $M$ has type $D'_1$ and hence $w^*\ad v'\in M_U$.
By Proposition 1, $w^*$ dominates $PN(x)$ for $x\not=f'_1,f'_2$, i.e.,
$w^*\ad g\in PN(x)$.
We have $y'_1\nad y'_2$, for otherwise $\lz f_2,f_1,w,f'_1,y'_1,y'_2\rz\cong P_6$.
Now $\lz y'_2,u',y'_1,f'_1,w^*,g\rz\cong P_6$, a contradiction.
\smallskip

\noindent
(5)
Let $M$ be of type D9, $w^*\ad f_2$, and $M'$ be the corresponding component
of type D10.
Since $\lz w^*,f_2,f_1,w',f'_1,f'_2\rz\not\cong P_6$, we conclude
that $w^*\ad f'_1$ or $w^*\ad f'_2$.
If $x\in V(M')$ is adjacent to $y\in V(M')$, and $w^*\ad x$, $w^*\nad y$,
then $\lz y_1,f_1,f_2,w^*,x,y\rz\not\cong P_6$ implies $w\ad y$.
Therefore, $w^*$ dominates $M'$.
Let $x\in N_2$.
If $w^*\nad x$, then $\lz f'_2,w^*,f_2,f_1,w',x\rz\cong P_6$.
Hence $w^*$ dominates $N_2$.
By definition, $M$ must be of type D3 and $M'$ must be of type D4, a contradiction,
since these types were defined before the types D9 and D10.
\smallskip

\noindent
(6)
Let $M$ be of type D11, $w^*\ad f_2$, and $M'$ be the corresponding component
of type D12.
Since $\lz w^*,f_2,f_1,w',f'_2,f'_1\rz\not\cong P_6$, we conclude
that $w^*\ad f'_1$ or $w^*\ad f'_2$.
If $x\in V(M')$ is adjacent to $y\in V(M')$, and $w^*\ad x$, $w^*\nad y$,
then $\lz y_1,f_1,f_2,w^*,x,y\rz\not\cong P_6$ implies $w\ad y$.
Therefore $w^*$ dominates $M'$.
Let $x\in N_2$.
If $w^*\nad x$, then $\lz f'_1,w^*,f_2,f_1,w',x\rz\cong P_6$.
Hence $w^*$ dominates $N_2$.
By definition, $M$ must be of type D3 and $M'$ must be of type D4, a contradiction.
\qed
\vspace{-0.3cm}

\begin{claim}
If $|X^*\cap M_X|=2$, then $M$ cannot be of type
D4, D6, D10, D12, D13, D15, D17, D20, D22 or D24.
\end{claim}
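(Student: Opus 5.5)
Throughout, assume $|X^*\cap M_X|=2$, so $w^*\ad \{f_1,f_2\}$ for the two $F$-vertices $f_1,f_2$ of the type-D component $M$. The aim is the usual one: $w^*$ is dominated after all, which contradicts Lemma \ref{l8}. There are two ways to get there. Either we find an induced $P_6$ (or one of $F_1,F_2$) built from $w^*$, the two adjacent $f$'s, the partner component and the auxiliary $W$-vertices $w,w'$. Or we show that $w^*$ has exactly the adjacencies that would have made $M$ (with its partner) one of the earlier-defined types in the D-procedure, which contradicts the order in which that procedure runs. The tool in every case is that $w^*\nad I$, and $I$ contains the set $D$ assigned to $M$ and its partner.

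Types D13, D15, D17 are immediate. For these, $D=\{w,w',f'_1,f'_2\}$ lies in $I$, and the partner $L'$ is the D14/D16/D18 component. Since $M=L$, we have $w^*\ad f_1,f_2$. For D13 we know $w\ad f_1$, $w\nad f_2$, and $w'\ad f_2$, $w'\nad f_1$. Also $w^*\nad w$ and $w^*\nad w'$. The path $\langle w, f_1, w^*, f_2, w'\rangle$ is then induced provided $w\nad w'$. To extend it to a $P_6$ we attach a neighbour of $w$ in $L'$ that is nonadjacent to $w^*$; such a vertex exists because $w^*\nad \{f'_1,f'_2\}$. If instead $w\ad w'$, we use $u_1,u_2$ or $y_1$ to build the $P_6$. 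I expect this to be routine.

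Types D20, D22, D24 are handled the same way, with the roles of $L$ and $L'$ reversed. Here $D=\{f_1,f_2,w,w'\}$ is taken from $L$, so $w^*\nad \{f_1,f_2,w,w'\}$. Since $w^*\ad f'_1,f'_2$ and $w\ad \{f'_1,f'_2\}$, the vertex $w'$ is adjacent only to $f'_1$. A path such as $\langle y_1,f_1,w,f'_2,w^*,\ldots\rangle$, or $\langle f_2,w',f'_1,w^*,\ldots\rangle$, continued by a private neighbour, yields a $P_6$.

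Types D4, D6, D10, D12 need a different argument, because there $M$ is the partner whose vertices $f'_1,f'_2$ are not in $I$. Take D4 as the model. The set $\{y_1,f_1,f_2,w\}$ lies in $I$, and $w$ dominates $L'=M$. Both $w$ and $w^*$ are adjacent to $f'_1,f'_2$. If $w\nad w^*$, consider $\langle y_1,f_1,f_2,w\rangle$ (it is induced given $w\nad\{y_1,f_1\}$ and $w\ad f_2$). Following it by $f'_1$ and then $w^*$ gives an induced $P_6$, since $w^*\nad \{y_1,f_1,f_2,w\}$ and $f'_1$ has no neighbour in $L$. D6 is symmetric, using $\{f_1,f_2,y_1,w\}$ with $w\ad y_1$ and $w\ad N_2$: the path is $f_2\,f_1\,y_1\,w\,f'_1\,w^*$. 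D10 and D12 follow the same route, with $w'$ playing this role: $w'\ad f'_1$ and $w'\ad f_1$, so $\langle y_1,f_2$ or $y_2,f_1,w',f'_1,w^*\rangle$ works, after checking $w'\nad w^*$ from $w^*\nad I$.

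The main obstacle will be the adjacency checks in D10 and D12, where $w'\ad f'_1$ but $w'\nad f'_2$. We must confirm which pairs are nonadjacent (in particular $w'$ versus $y_1$, and $w^*$ versus $w'$), and fall back to $F_1$ where a chord appears.
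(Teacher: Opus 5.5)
Your D4 and D6 paths are correct; they are the ones the paper uses in Claim 4, which already excludes D4 and D6. The other eight cases do not go through as you sketch them.

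\textbf{D13, D15, D17.} The path $\langle w,f_1,w^*,f_2,w'\rangle$ is never induced. In a type-D component $f_1\ad f_2$, so $\{w^*,f_1,f_2\}$ is a triangle and no induced path can run $f_1,w^*,f_2$.

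The paper instead uses that $M$ has type $D''_2$, so $u_1\ad u_2$.
\begin{itemize}
\item If $w^*\nad\{u_1,u_2\}$, then $w\ad w'$ is forced by $\langle w^*,f_1,w,u_2,u_1,w'\rangle$. The contradiction is then $\langle w^*,f_1,f_2,w,w',f'_1,u_1,u_2\rangle\cong F_2$ for D13 and D15, and $\langle w^*,f_1,f_2,w,w',u_1,f'_1\rangle\cong F_1$ for D17, not a $P_6$.
\item If $w^*$ meets $\{u_1,u_2\}$, a $P_6$ through $w'$ settles D15 and D17.
\item In D13 with $w^*\ad u_1$, the argument does not end in a forbidden subgraph. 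One shows $w^*\ad y'_1$, and then that $w^*$ dominates $M$ and $N'_2$. So $(M',M)$ would already have been classified D5/D6 in step 2 of the procedure.
\end{itemize}

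\textbf{D20.} The same ordering argument is needed here. Since $I\cap V(L)=\{f_1,f_2\}$, $w^*$ may be adjacent to $y_1$, and then your path $\langle y_1,f_1,w,f'_2,w^*,\dots\rangle$ has a chord. The surviving subcase is $w^*\ad y_1$ and $w^*\ad y'_2$. There the paper shows that $w^*$ dominates $M\cup N_2$ and closes via the D5/D6 ordering; ``continued by a private neighbour'' does not cover this. Note also that in D24 the roles of $w$ and $w'$ are swapped: $w'\ad\{f'_1,f'_2\}$, while $w$ is adjacent to $f'_1$ only.

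\textbf{D10, D12.} Here $I\cap V(L)=\{y_1,f_1\}$, so $f_2\notin I$, and $w^*\nad f_2$ does not follow from $w^*\nad I$. The paper gets it from Claim 4, items (5) and (6), which you have not supplied.

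Moreover, $w$ and $w'$ are adjacent to all of $L_{PN}$. So $w'\ad y_1$ and $w'\ad y_2$, and every path you propose that passes through some $y_i$ and $w'$ has a chord. In D12 it is $w$, not $w'$, that is adjacent to $f'_1$.

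The paper handles D10 with $w$, $w'$ and $y'_2$. First $w^*\nad y'_2$ follows from $\langle y'_2,w^*,f'_1,w,f_1,f_2\rangle$. Then $\langle w^*,f'_2,y'_2,w',f_1,f_2\rangle\cong P_6$. D12 needs $w^*\nad\{u'_1,u'_2\}$, $u'_1\ad u'_2$ and $w\ad w'$, and ends in an $F_2$.

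\textbf{What is missing.} You need to:
\begin{enumerate}
\item account for the edge $f_1f_2$;
\item use $F_1$ and $F_2$, not only $P_6$;
\item invoke Claim 4 for $w^*\nad f_2$ in D10 and D12;
\item in D13 and D20, where no forbidden subgraph arises, use the procedure-order contradiction (D5/D6).
\end{enumerate}
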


\pf
Suppose to the contrary that $X^*$ contains both vertices of $M_X$, where
$M$ has one of the above types.
By Claim 4, $M$ cannot be of type D4 or D6. As before, we will consider all other possibilities one by one, and derive a contradiction in each case.
\smallskip

\noindent
(1) Let $M$ be of type D10, $w^*\ad \{f'_1,f'_2\}$, and $L$ be the corresponding component
of type D9.
By Claim 4, $w^*\nad f_2\in L_X$.
We have $w^*\nad y'_2$, for otherwise $\lz y'_2,w^*,f'_1,w,f_1,f_2\rz\cong P_6$.
Now $\lz w^*,f'_2,y'_2,w',f_1,f_2\rz\cong P_6$, a contradiction.

\smallskip

\noindent
(2)
Let $M$ be of type D12, $w^*\ad \{f'_1,f'_2\}$, and $L$ be the corresponding component
of type D11.
By Claim 4, $w^*\nad f_2\in L_X$.
We have $w^*\nad u'_1$, for otherwise $\lz f'_1,w^*,u'_1,w',f_1,f_2\rz\cong P_6$.
Analogously, $w^*\nad u'_2$.
Also, $u'_1\ad u'_2$, since $M$ has type $D''_2$.
We have $w\ad w'$, for otherwise $\lz w^*,f'_1,w,u'_2,u'_1,w'\rz\cong P_6$.
Now $\lz w^*,f'_1,f'_2,w,w',y_1,u'_1,u'_2\rz\cong F_2$, a contradiction.

\smallskip

\noindent
(3)
Let $M$ be of type D13, $w^*\ad \{f_1,f_2\}$, and $M'$ be the
corresponding component of type D14. Since $M$ is of type $D''_2$,
we obtain $u_1\ad u_2$. Suppose that $w^*\nad \{u_1,u_2\}$. We
obtain $w\ad w'$, for otherwise $\lz w^*,f_1,w,u_2,u_1,w'\rz\cong
P_6$. Now $\lz w^*,f_1,f_2,w,w',f'_1,u_1,u_2\rz\cong F_2$.
Therefore, w.l.o.g. $w^*\ad u_1$. Now $w^*\ad y'_1$, for otherwise
$\lz f_1,w^*,u_1,w',f'_1,y'_1\rz\cong P_6$. Further, if $x\in
V(M)$ is adjacent to $y\in V(M)$ and $w^*\ad x$, $w^*\nad y$, then
$\lz f'_2,f'_1,y'_1,w^*,x,y\rz\cong P_6$. Therefore $w^*$
dominates $M$. 
Let $x\in N'_2$. If $w^*\nad x$, then $\lz
f_2,w^*,y'_1,f'_1,w,x\rz\cong P_6$. Hence $w^*$ dominates $N'_2$.
By definition, $M$ must be of type D6 and $M'$ must be of type D5,
a contradiction.

\smallskip

\noindent
(4)
Let $M$ be of type D15, $w^*\ad \{f_1,f_2\}$, and $M'$ be the
corresponding component of type D16. Since $M$ is of type $D''_2$,
we obtain $u_1\ad u_2$. Suppose that $w^*\nad \{u_1,u_2\}$. We
obtain $w\ad w'$, for otherwise $\lz w^*,f_1,w,u_2,u_1,w'\rz\cong
P_6$. Now $\lz w^*,f_1,f_2,w,w',f'_1,u_1,u_2\rz\cong F_2$. If
$w^*\ad u_1$, then $\lz f_1,w^*,u_1,w',f'_1,f'_2\rz\cong P_6$, and
if $w^*\ad u_2$, then $\lz u_2,w^*,f_2,w',f'_1,f'_2\rz\cong P_6$,
a contradiction.

\smallskip

\noindent
(5)
Let $M$ be of type D17, $w^*\ad \{f_1,f_2\}$, and $M'$ be the
corresponding component of type D18. Since $M$ is of type $D''_2$,
we obtain $u_1\ad u_2$. Suppose that $w^*\nad \{u_1,u_2\}$. We
obtain $w\ad w'$, for otherwise $\lz w^*,f_1,w,u_2,u_1,w'\rz\cong
P_6$. We have $\lz w^*,f_1,f_2,w,w',u_1,f'_1\rz\cong F_1$.
Therefore, w.l.o.g. $w^*\ad u_1$. Now $\lz
f_1,w^*,u_1,w',f'_2,f'_1\rz\cong P_6$, a contradiction.

\smallskip

\noindent
(6)
Let $M$ be of type D20, $w^*\ad \{f'_1,f'_2\}$, and $L$ be the corresponding component
of type D19.
Suppose $w^*\nad y'_2$.
We have $w^*\ad y_1$, for otherwise $\lz w^*,f'_2,y'_2,w',f_1,y_1\rz\cong P_6$.
Now $\lz f_2,f_1,y_1,w^*,f'_2,y'_2\rz\cong P_6$.
Therefore $w^*\ad y'_2$.
We have $w^*\ad y_1$, for otherwise $\lz y'_2,w^*,f'_1,w,f_1,y_1\rz\cong P_6$.
Let $x\in V(M)$ be adjacent to $y\in V(M)$ and $w^*\ad x$, $w^*\nad y$.
We obtain $w^*\ad y$, for otherwise $\lz f_2,f_1,y_1,w^*,x,y\rz\cong P_6$.
Therefore, $w^*$ dominates $M$.
Assume that $u\in N_2$ and $u\nad w^*$.
We have $\lz f'_2,w^*,y_1,f_1,w',u\rz\cong P_6$.
Hence $w^*$ dominates $N_2$.
Thus, by definition, $L$ must be of type D5 and $M$ must be of type D6,
a contradiction.

\smallskip

\noindent
(7)
Let $M$ be of type D22, $w^*\ad \{f'_1,f'_2\}$, and $L$ be the corresponding component
of type D21.
We have $w^*\nad y'_2$, for otherwise $\lz y'_2,w^*,f'_1,w,f_1,f_2\rz\cong P_6$.
Now $w^*\ad y_1$, for otherwise $\lz w^*,f'_2,y'_2,w',f_1,y_1\rz\cong P_6$.
Hence $\lz f_2,f_1,y_1,w^*,f'_2,y'_2\rz\cong P_6$, a contradiction.

\smallskip

\noindent
(8)
Finally, let $M$ be of type D24, $w^*\ad \{f'_1,f'_2\}$, and $L$
be the corresponding component of type D23. We have $w^*\ad y'_2$,
for otherwise $\lz w^*,f'_2,y'_2,w,f_1,f_2\rz\cong P_6$. Now
$w^*\ad y_1$, for otherwise $\lz y'_2,w^*,f'_1,w',f_1,y_1\rz\cong
P_6$. Since $\lz f_2,f_1,y_1,w^*,y'_2,u'_2\rz\not\cong P_6$, we
obtain $w^*\ad u'_2$. The set $\{u'_1,f'_2\}$ dominates $M$
because $M$ has type $D'_2$. Hence $u'_1\ad u'_2$. We have $w\ad
u'_2$, for otherwise $\lz f_2,f_1,w,y'_1,u'_1,u'_2\rz\cong P_6$.
Hence $\lz f_2,f_1,w,u'_2,w^*,f'_2\rz\cong P_6$, a contradiction.
\qed

By Claim 4, $M$ may have one of the types D10, D12, D13, D15, D17, D20, D22 or D24,
and, by Claim 5, $|X^*\cap M_X|=1$.

Let $M$ be of type D10 and $L$ be the corresponding component of type D9.
By Claim 4, $w^*\nad f_2$.
If $w^*\nad f'_1$, then $w^*\ad f'_2$ and $\lz w^*,f'_2,f'_1,w',f_1,f_2\rz\cong P_6$.
Hence $w^*\ad f'_1$ and $w^*\nad f'_2$.
We have $w^*\nad y'_2$, for otherwise $\lz y'_2,w^*,f'_1,w,f_1,f_2\rz\cong P_6$.
Since $|X^*|\ge 2$, we have $w^*\ad v\in R$ where $R$ is another component.
If $x\in V(R)$ is adjacent to $y\in V(R)$ and $w^*\ad x$, $w^*\nad y$,
then $\lz y'_2,f'_2,f'_1,w^*,x,y\rz\cong P_6$.
Therefore $w^*$ dominates $R$.
Since $w^*\nad I$, we see that $R$ may be of type D4, D6, D10, D12, D13, D15, D17, D20, D22 or D24.
This contradicts to Claim 5 because $|X^*\cap R_X|=2$.

Let $M$ be of type D12 and $L$ be the corresponding component of type D11.
By Claim 4, $w^*\nad f_2$.
If $w^*\ad f'_1$ then $\lz w^*,f'_1,f'_2,w',f_1,f_2\rz\cong P_6$.
If $w^*\ad f'_2$ then $\lz w^*,f'_2,f'_1,w,f_1,f_2\rz\cong P_6$,
a contradiction.

Let $M$ be of type D13, and $M'$ be the corresponding component
of type D14.
Suppose w.l.o.g. that $w^*\ad f_2$ and $w^*\nad f_1$.
Since $\lz w^*,f_2,f_1,w,f'_1,y'_1\rz\not\cong P_6$, we conclude
that $w^*\ad y'_1$.
Now $\lz f_1,f_2,w^*,y'_1,f'_1,f'_2\rz\cong P_6$, a contradiction.

Let $M$ be of type D15, and $M'$ be the corresponding component
of type D16.
If $w^*\ad f_1$ then $\lz w^*,f_1,f_2,w',f'_1,f'_2\rz\cong P_6$.
Suppose that $w^*\ad f_2$.
Since $\lz w^*,f_2,f_1,w,f'_1,y'_1\rz\not\cong P_6$, we conclude
that $w^*\ad y'_1$.
Now $\lz f_1,f_2,w^*,y'_1,f'_1,f'_2\rz\cong P_6$, a contradiction.

Let $M$ be of type D17, and $M'$ be the corresponding component
of type D18.
Suppose w.l.o.g. that $w^*\ad f_2$.
We have $\lz w^*,f_2,f_1,w,f'_1,f'_2\rz\cong P_6$, a contradiction.

Let $M$ be of type D20, and $L$ be the corresponding component
of type D19.
If $w^*\ad y_1$, then $\lz f_2,f_1,y_1,w^*,f'_1,f'_2\rz\cong P_6$,
since $|X^*\cap M_X|=1$.
Hence $w^*\nad y_1$.
Suppose that $w^*\ad f'_2$.
We have $\lz w^*,f'_2,f'_1,w',f_1,y_1\rz\cong P_6$.
Hence $w^*\ad f'_1$.
We obtain $w^*\nad y'_2$, for otherwise $\lz y'_2,w^*,f'_1,w,f_1,y_1\rz\cong P_6$.
Since $|X^*|\ge 2$, we have $w^*\ad v\in R$ where $R$ is another component.
If $x\in V(R)$ is adjacent to $y\in V(R)$ and $w^*\ad x$, $w^*\nad y$,
then $\lz y'_2,f'_2,f'_1,w^*,x,y\rz\cong P_6$.
Therefore $w^*$ dominates $R$.
Since $w^*\nad I$, we see that $R$ may be of type D4, D6, D10, D12, D13, D15, D17, D20, D22 or D24.
This contradicts to Claim 5 because $|X^*\cap R_X|=2$.
We can show analogously that $M$ cannot be of type D22.

Let $M$ be of type D24, and $L$ be the corresponding component
of type D23.
If $w^*\ad y_1$, then $\lz f_2,f_1,y_1,w^*,f'_1,f'_2\rz\cong P_6$.
Hence $w^*\nad y_1$.
Suppose that $w^*\ad f'_2$.
We have $\lz w^*,f'_2,f'_1,w,f_1,y_1\rz\cong P_6$.
Hence $w^*\ad f'_1$.
We obtain $w^*\nad y'_2$, for otherwise $\lz y'_2,w^*,f'_1,w',f_1,y_1\rz\cong P_6$.
Applying the same argument as for type D20 we easily deduce a contradiction.
\qed
\vspace{-0.3cm}

\begin{lem}
\label{l10}
The set $X^*$ contains vertices of at least two components of the
graph $H-W$.
\end{lem}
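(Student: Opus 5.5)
We argue by contradiction. Suppose all of $X^*=N(w^*)\cap X$ lies in the vertex set $L_X$ of a single component $L$ of $H-W$. By Lemma \ref{l8}, $|X^*|\ge 2$, $w^*\nad I$, and $L$ is not of type D25. By Lemma \ref{l9}, $L$ is not of type D1--D24 either. So $L$ has one of the types A1--A4, B1--B6, C, E, F1--F5, G1--G5 or H, or it is the partner component of type B3. In each case we know the set $D\subseteq I$ that was put into $I$ for $L$, and we know $w^*\nad D$. The plan is a case-by-case check that $w^*$, its two or more neighbours in $L_X$, and the structure of $L$ either force an induced $P_6$ or one of $F_1$--$F_{11}$, or contradict $w^*\nad I$.

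The first step disposes of the types where $D\supseteq L_X$ or $D$ omits only a single vertex of $L_X$. For type C, $D=L_Z=L_X$, so $w^*$ has no neighbour in $L_X$ outside $I$; this is an immediate contradiction. For type B3 we have $D=L_X$, again immediate. For types A1, A3, B1, B2, B4, B5 and H, $D$ contains $L_X$ minus one vertex. Hence $|X^*|\le 1$, contradicting $|X^*|\ge 2$.

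The second step handles the types where $D$ misses two or more vertices of $L_X$.
\begin{itemize}
\item[] Types A2 and A4: here $D\cap L_X\supseteq\{f_1\}$ or $D$ contains $L_X-\{z,f_2\}$. Since $w^*\nad D$, we get $X^*\subseteq\{z,f_2\}$, or in A2, $X^*\subseteq L_Z$.
\item[] Types B6, E and F1--F5: here $X^*$ is contained in a small explicit set, for example $\{f_1,z\}$ or $\{z_1,z_2\}$.
\item[] Types G1--G5: here $X^*\subseteq\{f_1,f_2,f_3\}$ minus the vertices of $D$.
\end{itemize}
In each subcase, $w^*$ is adjacent to two specified vertices of $L_X$ and non-adjacent to the specified vertices of $D$ (the $y_i$, $p$, $u_1$, or $w$). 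We then build an induced $P_6$ from the chains already used in the paper. A typical shape is $\lz u_2,y_2,y_1,f_1,w^*,\cdot\rz$ or $\lz u_1,y_1,f_1,w^*,z,p\rz$, or, in the E case, $\lz w^*,z_1,f_1,f_2,z_2,\cdot\rz$. Alternatively, we use Proposition 1 applied to $w^*$: when $w^*$ has a neighbour in $L_U$, it must dominate some $PN(x)$, which forces an adjacency to $D$. For example, in type E, $w^*\ad\{z_1,z_2\}$ with $z_1\in R_1$ and $z_2\in R_2$. Then $\lz y,z_2,w^*,z_1,\ldots\rz$, extended through $f_1$ and $y_1$, gives a $P_6$ unless $w^*\ad y_1$ or $w^*\ad y_2$, which contradicts $w^*\nad D$.

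The main obstacle will be the subtypes whose $D$ is built from private neighbours or $W$-vertices rather than from $X$: A2, F2, F4, G1, G2 and G4. There, $w^*$ can be adjacent to both $f_i$, and non-adjacency to $D$ does not immediately give a path. In those cases we will use the defining conditions of the subtype, namely that no $w\in W$ dominates $L_U\cup PN(\cdot)$ and the presence or absence of $u\in N_{1,2}$ with $u\ad u_1$. Together with Lemma \ref{l6}, when $w^*\ad\{f_1,f_2\}$ and $w^*\nad\{y_1,y_2\}\cup\bh N$, these conditions either yield $F_1$ or $F_2$, or show that $w^*$ itself would have qualified as the vertex $w$ in an earlier subtype (A1, A3, F3 or G5). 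That last outcome contradicts the subtype assignment.
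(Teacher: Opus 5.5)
You have the paper's skeleton: eliminate every type in which at most one vertex of $L_X$ lies outside $I$ by the count $|X^*\cap L_X|\ge 2$, then treat A4, E, F1, F2, F4, F5 and G1--G4 one by one. A2, B6, F3 and G5 also fall to the count, since their $D$ misses only $z$, $f$, $z$, $f_3$ respectively, so A2 is not an obstacle.

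The gap is that for the surviving types you give no actual argument, and the one general mechanism you state is false. If $w^*$ has a neighbour in $L_U$, Proposition 1 only yields some $x\in X$ with $PN(x)\subseteq N(w^*)$. This $x$ may lie in a different component $M'$ of $H-W$, and then nothing in the construction of $I$ makes this contradict $w^*\nad I$.

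The missing idea is to turn that case into an induced $P_6$. A vertex $g\in PN(x)$ lies in $M'$, so it is non-adjacent to all of $L$. Moreover $w^*\nad x$, and $w^*\nad x'$ for a neighbour $x'\in X$ of $x$ (Lemma \ref{l1}), precisely because $N(w^*)\cap X\subseteq L_X$ by your contradiction hypothesis. This gives paths such as $\lz y_2,y_1,f_1,w^*,g,x\rz$ (F2, G1, G2, G4) or $\lz y_2,y_1,f_1,z,w^*,g\rz$ (A4).

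In F2, F4, G1, G2 and G4 this shows that $w^*$ has no neighbour in $L_U$ at all. In A4, F1 and G3 it forces $x\in L_Z$ (resp.\ $x=f_3$), and only then do the subtype definitions apply. In A4, $w^*$ becomes the $w$ of type A3. In F1 and G3 you get $u\in N_{1,2}$ with $u\nad w^*$ and $u\nad u_1$, hence $\lz u,y_1,u_1,w^*,z,f_2\rz\cong P_6$ (resp.\ with $f_3$ in place of $z$).

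F2 and G4 further need the chain of forced adjacencies on $y\in PN(z)$ (resp.\ $y_3$) ending in $\lz f_1,f_2,z,y,u,u_1\rz\cong P_6$; this is absent from your plan. Lemma \ref{l6} is unnecessary here. In types F and G you already have $y_1\ad y_2$ and $u_1\nad u_2$, so $w^*\ad\{f_1,f_2\}$ with $w^*\nad\{u_1,u_2\}$ gives $\lz w^*,f_1,f_2,y_1,y_2,u_1,u_2\rz\cong F_1$ directly.

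The explicit paths you do offer are not induced. In type E, $y_1$ dominates $PN(z_2)$, so for $y\in PN(z_2)$ the sequence $\lz y,z_2,w^*,z_1,f_1,y_1\rz$ has the chord $yy_1$. The correct path is $\lz z_1,w^*,z_2,f_2,y_2,y_1\rz$, which uses only $w^*\nad\{f_2,y_1,y_2\}\subseteq D$ and the bridge structure from Lemma \ref{l5}. Likewise $\lz u_1,y_1,f_1,w^*,z,p\rz$ never works: in type A, $f_1\ad z$, and in type F, Lemma \ref{l5}(d) says $\{u_1,y_1\}$ dominates $PN(z)$.

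So you have the right case split, but the case analysis that constitutes the proof is missing. Its key step would also fail without the other-component argument above.
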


\pf
Suppose to the contrary that $X^*$ only contains vertices of
one component $M$.
By Lemma \ref{l9}, $M$ cannot be of types D1--D24.
By Lemma \ref{l8}, $M$ cannot have type D25.
Furthermore, the component $M$ cannot be of type
A1--A3, B1--B6, F3, G5 or H,
since for any of these types $|M_X-I|=0$ or 1, while $|X^*|\ge 2$.
If $M$ is of type E, then $N(w^*)\cap M_X=\{z_1,z_2\}$ and
$\lz z_1,w^*,z_2,f_2,y_2,y_1\rz\cong P_6$, a contradiction.
Thus, $M$ may have one of the types A4, F1, F2, F4, F5, or G1--G4.

Let $M$ have type A4 and $N(w^*)\cap X=\{f_2,z\}$, where $f_2,z\in
M_X$. Since $w^*\nad I$, we have $w^*\nad \{f_1,y_1,y_2\}$. If
there is a $u\in N_2$ such that $w^*\nad u$, then $\lz
u,y_2,y_1,f_1,z,w^*\rz\cong P_6$. Therefore, $w^*$ dominates
$N_2$. Let there exist an $u\in N_1\cup N_{1,2}$ such that
$w^*\nad u$. By Lemma \ref{l4}, $u\nad u_2\in N_2$. We obtain $\lz
u,y_1,f_1,z,w^*,u_2\rz\cong P_6$. Therefore, $w^*$ dominates
$N_1\cup N_{1,2}$. Thus, $w^*$ dominates $M_U$. Since $w^*\ad
u_1\in U$, it follows by Proposition 1 that $w^*$ dominates
$PN(x)$ for some $x\in X$. Clearly, $x\not=f_1,f_2$. Suppose that
$x\not\in M_Z$ and hence $x\in M'_X$, where $M'$ is another
component of $H-W$. Let $w^*\ad y$, where $y\in PN(x)$. We have
$\lz y_2,y_1,f_1,z,w^*,y\rz\cong P_6$. Therefore, $x\in M_Z$.
Thus, $w^*$ dominates $M_U\cup PN(x)$ where $x\in M_Z$ and
$w^*\nad \{f_1,y_1,y_2\}$, i.e., $M$ is a component of type A3, a
contradiction.

Now let $M$ have type F1 and $w^*\ad \{f_1,z\}$. Suppose that
there is a $u\in N_1\cup N_2$ such that $u\nad w^*$. If $u\in
N_1$, then $\lz u,y_1,y_2,f_2,z,w^*\rz\cong P_6$, and if $u\in
N_2$, then $\lz u,y_2,y_1,f_1,w^*,z\rz\cong P_6$. Hence $w^*$
dominates $N_1\cup N_2\not=\es$. 

By Proposition 1, $w^*$ dominates
$PN(x)$ for some $x\in X$. If $x\in M'_X$ for another component
$M'$ of $H-W$, then $w^*\ad g$ for $g\in PN(x)$ and $\lz
y_1,y_2,f_2,z,w^*,g\rz\cong P_6$. Since $w^*\nad \{y_1,y_2\}$,
$w^*$ dominates $PN(z)$. By the hypothesis, $w^*$ cannot dominate
$M_U\cup PN(z)$, and hence there is a $u\in N_{1,2}$ such that
$w^*\nad u$. By another hypothesis, $u\nad u_1$. We obtain $\lz
u,y_1,u_1,w^*,z,f_2\rz\cong P_6$. 

Let $M$ be of type F2, and
suppose that $w^*\ad f_1$. If $w^*\ad f_2$, then $w^*$ is adjacent
to $u_1$ or $u_2$, for otherwise $\lz
w^*,f_1,f_2,y_1,y_2,u_1,u_2\rz\cong F_1$. If $w^*\ad z$, then
$w^*\ad u_2$, for otherwise $\lz u_2,y_2,y_1,f_1,w^*,z\rz\cong
P_6$. Therefore, by Proposition 1, $w^*$ dominates $PN(x)$ for
$x\in M'_X$, where $M'$ is another component of $H-W$. We have
$w^*\nad x$, $w^*\ad g$ where $g\in PN(x)$, and $\lz
y_2,y_1,f_1,w^*,g,x\rz\cong P_6$. Thus, $w^*\nad f_1$ and hence
$w^*\ad \{f_2,z\}$. If $w^*$ is adjacent to a vertex of $M_U$,
then $w^*$ dominates $PN(x)$ by Proposition 1. Clearly, $x\in
M'_X$ for some component $M'$, $w^*\nad x$ and $w^*\ad g\in
PN(x)$. We have $\lz y_1,y_2,f_2,w^*,g,x\rz\cong P_6$. Therefore,
$w^*\nad \{u,u_1,u_2\}$. Now, $\lz
w^*,f_2,f_1,y_1,y,u_2\rz\not\cong P_6$ implies $y\nad u_2$, $\lz
w^*,z,y,y_1,y_2,u_2\rz\not\cong P_6$ implies $y\ad y_2$, $\lz
w^*,f_2,z,y_2,y,u_2,u_1\rz\not\cong F_1$ implies $y\nad u_1$, $\lz
w^*,z,y,y_2,u,u_1\rz\not\cong P_6$ implies $y\ad u$. Finally, $\lz
f_1,f_2,z,y,u,u_1\rz\cong P_6$, a contradiction. 

Let $M$ be of
type F4, and suppose that $w^*\ad u\in M_U$. By Proposition 1,
$w^*$ dominates $PN(x)$ for $x\in X$. Clearly, $x\not\in M_X$, and
hence $x\in M'_X$, where $M'$ is another component of $H-W$. We
have $w^*\ad g\in PN(x)$. By Lemma \ref{l1}, $x\ad x'\in M'_X$.
Since $N(w^*)\cap X\subseteq M_X$, we obtain $w^*\nad \{x,x'\}$.
If $w^*\ad v\in M_X$ and $y=PN(v)\cap I$, then $y\nad w^*$. We
obtain $\lz y,v,w^*,g,x,x'\rz\cong P_6$. Thus, $w^*\nad M_U$. If
$w^*\ad \{f_1,f_2\}$, then $\lz
w^*,f_1,f_2,y_1,y_2,u_1,u_2\rz\cong F_1$, and if $w^*\ad
\{f_1,z\}$, then $\lz u_2,y_2,y_1,f_1,w^*,z\rz\cong P_6$. Thus,
$w^*\nad f_1$ and $w^*\ad \{f_2,z\}$. Recall that $y\ad u_1$ and
$y\nad y_1$. We have $\lz w^*,z,y,u_1,y_1,f_1\rz\cong P_6$, a
contradiction. 

Let $M$ be of type F5. Since $|N(w^*)\cap M_X|\ge
2$, the vertex $w^*$ is adjacent to some vertex $z\in M_Z$. We
obtain $\lz u_1,y_1,y_2,f_2,z,w^*\rz\cong P_6$, a contradiction.

Let $M$ have type G1 or G2, and suppose that $w^*\ad u\in M_U$. By
Proposition 1, $w^*$ dominates $PN(x)$ for $x\in M'_X$, where $M'$
is another component of $H-W$. The vertex $w^*$ is adjacent to a
vertex from $\{f_1,f_2\}$, say $w^*\ad f_1$. We obtain $\lz
y_2,y_1,f_1,w^*,g,x\rz\cong P_6$, where $g\in PN(x)$. Thus,
$w^*\nad M_U$. We have $w^*\ad f_2$, for otherwise $w^*\ad f_3$
and $\lz u_1,y_1,y_2,f_2,f_3,w^*\rz\cong P_6$. Also, $w^*\nad
f_1$, for otherwise $\lz w^*,f_1,f_2,y_1,y_2,u_1,u_2\rz\cong F_1$.
Since $|X^*|\ge 2$, we obtain $w^*\ad f_3$. If $M$ is of type G1,
then $u_1\ad y_3$, for otherwise $\lz
u_1,y_1,f_1,f_2,f_3,y_3\rz\cong P_6$. Now $\lz
w^*,f_3,y_3,u_1,y_1,f_1\rz\cong P_6$, a contradiction. 

Let $M$ be
of type G2 and suppose that $y_1\nad y_3$. We have $u_1\ad y_3$,
for otherwise $\lz u_1,y_1,f_1,f_2,f_3,y_3\rz\cong P_6$. We obtain
$\lz w^*,f_3,y_3,u_1,y_1,f_1\rz\cong P_6$, a contradiction. Thus,
$y_1\ad y_3$. Furthermore, $u_2\nad y_3$, for otherwise $\lz
w^*,f_2,f_1,y_1,y_3,u_2\rz\cong P_6$. Also, $y_2\ad y_3$, for
otherwise $\lz w^*,f_3,y_3,y_1,y_2,u_1\rz\cong P_6$. Since
$\{y_1,y_2\}$ does not dominate $M_U$, there is a $u_3\in M_U\cap
N(y_3)$ such that $u_3\nad \{y_1,y_2\}$. Since $\lz
u_2,u_3,y_3,f_3,f_2,f_1\rz\not\cong P_6$, we obtain $u_2\nad u_3$.
Now $\lz w^*,f_2,f_3,y_2,y_3,u_2,u_3\rz\cong F_1$, a
contradiction. 

Let $M$ have type G3, and $w^*\ad \{f_1,f_3\},
w\nad f_2$. Assume that there is a $u\in N_1\cup N_2$ with $u\nad
w^*$. If $u\in N_1$, then $\lz u,y_1,y_2,f_2,f_3,w^*\rz\cong P_6$.
If $u\in N_2$, then $\lz u,y_2,y_1,f_1,w^*,f_3\rz\cong P_6$. Hence
$w^*$ dominates $N_1\cup N_2$. By Proposition 1, $w^*$ dominates
$PN(x)$ for some $x\in X$. If $x\in M'_X$ where $M'$ is another
component of $H-W$, then $w^*\ad g$ for $g\in PN(x)$ and $\lz
y_1,y_2,f_2,f_3,w^*,g\rz\cong P_6$. Since $w^*\nad \{y_1,y_2\}$,
$w^*$ dominates $PN(f_3)$. We know that $\{y_1,y_2\}$ dominates
$M_U$, and hence $M_U=N_1\cup N_2\cup N_{1,2}$. By the hypothesis,
$w^*$ cannot dominate $N_{1,2}$, so there is a $u\in N_{1,2}$
such that $w^*\nad u$. By another hypothesis, $u\nad u_1$. We
obtain $\lz u,y_1,u_1,w^*,f_3,f_2\rz\cong P_6$. 

Now let $M$ be of
type G4, and suppose that $w^*\ad f_1$. If $w^*\ad f_2$, then
$w^*$ is adjacent to $u_1$ or $u_2$, for otherwise $\lz
w^*,f_1,f_2,y_1,y_2,u_1,u_2\rz\cong F_1$. If $w^*\ad f_3$, then
$w^*\ad u_2$, for otherwise $\lz u_2,y_2,y_1,f_1,w^*,f_3\rz\cong
P_6$. Therefore, by Proposition 1, $w^*$ dominates $PN(x)$ for
$x\in M'_X$, where $M'$ is another component of $H-W$. We have
$w^*\nad x$, $w^*\ad g$ where $g\in PN(x)$, and $\lz
y_2,y_1,f_1,w^*,g,x\rz\cong P_6$. Thus, $w^*\nad f_1$ and hence
$w^*\ad \{f_2,f_3\}$. If $w^*$ is adjacent to a vertex of $M_U$,
then $w^*$ dominates $PN(x)$ by Proposition 1. Clearly, $x\in
M'_X$ for some component $M'$, $w^*\nad x$ and $w^*\ad g\in
PN(x)$. We have $\lz y_1,y_2,f_2,w^*,g,x\rz\cong P_6$. Therefore,
$w^*\nad \{u,u_1,u_2\}$. 

Suppose that $y_1\nad y_3$. Since
$\{y_1,y_2\}$ dominates $PN(f_3)$, we obtain $y_2\ad y_3$ and $\lz
f_1,y_1,y_2,y_3,f_3,w^*\rz\cong P_6$. Hence $y_1\ad y_3$. Now,
$\lz w^*,f_2,f_1,y_1,y_3,u_2\rz\not\cong P_6$ implies $y_3\nad
u_2$, $\lz w^*,f_3,y_3,y_1,y_2,u_2\rz\not\cong P_6$ implies
$y_3\ad y_2$, $\lz w^*,f_2,f_3,y_2,y_3,u_2,u_1\rz\not\cong F_1$
implies $y_3\nad u_1$, $\lz w^*,f_3,y_3,y_2,u,u_1\rz\not\cong P_6$
implies $y_3\ad u$. Finally, $\lz f_1,f_2,f_3,y_3,u,u_1\rz\cong
P_6$, a contradiction. \qed \vspace{-0.3cm}

\begin{lem}
\label{l11}
The set $X^*$  contains only
vertices from components of type B6 and D25.
\end{lem}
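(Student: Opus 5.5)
We argue by contradiction: suppose $X^*$ contains a vertex $x$ of a component $M$ whose type is not B6 or D25. By Lemma \ref{l9}, $M$ is not of type D1--D24. By Lemma \ref{l10}, $X^*$ also meets a second component $R\neq M$. The main tool is the observation already used repeatedly in the proofs of Lemmas \ref{l9} and \ref{l10}: if $w^*\ad x\in M_X$ and we can find an induced path $(a,b,x)$ or $(a,b,c,x)$ inside $M$ avoiding $N(w^*)$, then for every edge $st$ of $R$ with $w^*\ad s$, $w^*\nad t$ we obtain an induced $P_6$ through $w^*$. Since $I\cap N(w^*)=\es$, in every type the vertices of $D\cap V(M)$ (for example $y_1,y_2$, $y$, or $f_1$) supply such a path. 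Hence $w^*$ dominates $R$. Symmetrically, using an induced $P_3$ in $R$ ending at $w^*$, $w^*$ is forced to dominate $M$ (or at least $M_X\cup M_{PN}$, and large parts of $M_U$).

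We then go through the remaining types of $M$: A1--A4, B1--B5, C, E, F1--F5, G1--G5 and H. For types where $|M_X-I|\le 1$, namely A1--A3, B1--B5, F3, G5 and H, together with C (where $M_Z\subseteq I$), domination of $M_X$ by $w^*$ contradicts $w^*\nad I$ immediately. For type E, $w^*$ cannot be adjacent to $f_1$ or $f_2$, so $w^*\ad z\in M_Z$; then $\lz y_1,y_2,f_2,z,w^*,s\rz$ with $s\in R$ gives an induced $P_6$ (after checking $w^*\nad y_i$). For types A4, F1, F2, F4, F5 and G1--G4, $w^*$ must avoid the vertices of $D$. We then produce a $P_6$ of the form $\lz u,y_i,\dots,x,w^*,s\rz$ with $s\in V(R)$ and $s\nad M$. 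Here $u\in M_U$ is chosen nonadjacent to $w^*$; such a $u$ exists, since otherwise $w^*$ dominates $M_U$, and then by Proposition 1 $w^*$ would dominate some $PN(x')$, which forces $M$ into an earlier subtype (A3, F3, G5), exactly as in the proof of Lemma \ref{l10}.

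The second part treats a component $R$ met by $X^*$ of type B6 or D25, assuming every component met is of these types except $M$. For B6, with $D=L_Z\cup\{y\}$, $w^*$ can only see $f$ among $R_X$, so we take the path $y,f,w^*$ or $u,y,f,w^*$ in $R$ together with a path in $M$ to get a $P_6$. For D25, $w^*\ad f_1$ say, $w^*\nad S\cap V(R)$, and Claim 2 constrains its neighbourhood, again yielding a $P_6$ joined across $w^*$.

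The main obstacle is organizing the case analysis over the many subtypes of $M$, each requiring the right induced $P_6$ (or occasionally $F_1$ or $F_2$). The hardest cases are the subtypes A4, F2 and G4, where $w^*$ may be adjacent to $U$-vertices. For these we expect to reuse the Proposition 1 argument, forcing $w^*$ to dominate a private neighbourhood in another component, exactly as done for Lemma \ref{l10}.
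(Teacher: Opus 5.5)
Your basic idea, gluing an induced path in one component of $H-W$ to an induced path in another through $w^*$, is the right one. However, the step that carries all the content is only asserted. That ``in every type the vertices of $D\cap V(M)$ supply'' the required path in $M$ is exactly what must be verified type by type, and $D$ alone does not suffice:
\begin{itemize}
\item For A2 and A3 you must first show that $w^*$ dominates $PN(f_2)$ (otherwise $\lz g,f_2,z,w^*,b,a\rz\cong P_6$). Then the subtype condition that no $w\in W$ dominates $M_U\cup PN(f_2)$ gives $u\in M_U$ with $u\nad w^*$. Only then are $\lz a,b,w^*,z,p,u\rz$ (A2) and $\lz a,b,w^*,z,w,u\rz$ (A3) available. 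In A3, $w\notin V(M)$, and $w\nad\{a,b\}$ needs its own $P_6$.
\item For H you must split on whether $w^*\ad y_1$; in that case use $(x,f_1,y_1)$ with $x\in M_X-\{f_1,f_2\}$.
\item For B2, B4, B5 the paper uses a different $P_6$ altogether, through the partner component and its vertex $w$, e.g.\ $\lz w^*,z,f,w,f',z'\rz$ for B2.
\end{itemize}
Note also that had your claim been proved, you would already be finished. Every type admissible for $R$ (D1--D24 are excluded by Lemma~\ref{l9}), B6 and D25 included, has $I\cap V(R)\ne\es$, so ``$w^*$ dominates $R$'' contradicts $w^*\nad I$ outright. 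Neither your second part on B6/D25 nor Claim~2 is then needed. Claim~2 does not apply to $w^*$ anyway, since it concerns $W(S)\subseteq W_1$ and $w^*\notin W_1$.

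The rest of the plan is logically broken. The ``symmetric'' step cannot give domination of $M$. An induced $P_3$ $(a,b,w^*)$ with $a,b\in V(R)$, together with an edge $st$ of $M$, yields only a $P_5$. An induced path on three vertices of $R$ of which only the last sees $w^*$ cannot exist once you have concluded that $w^*$ dominates $R$. So domination of $M_X$ is never established, and the elimination of A1--A3, B1, B2, B4, B5, F3, G5, H via $|M_X-I|\le 1$ collapses; only B3 and C, where $M_X\subseteq I$, are legitimately excluded this way. That device works in Lemma~\ref{l10} only because there $X^*\subseteq M_X$ and $|X^*|\ge 2$. Here $w^*$ may have a single neighbour in $M_X$, the unique vertex of $M_X-I$, and that is exactly the situation to be excluded. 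For the same reason, ``exactly as in the proof of Lemma~\ref{l10}'' does not transfer to A4, F1, F2, F4, F5, G1--G4, since those arguments use $X^*\subseteq M_X$ (e.g.\ $w^*\ad\{f_2,z\}$ for A4, $w^*\nad\{x,x'\}$ for F4).

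What is needed instead is the following. $R$ is connected and contains both a neighbour of $w^*$ and a vertex of $I$, hence it contains $a,b$ with $\lz a,b,w^*\rz\cong P_3$. It then suffices to find, for each remaining type of $M$, vertices $c,d,e$ (normally in $V(M)$) with $\lz a,b,w^*,c,d,e\rz\cong P_6$. These often come straight from $D$, e.g.\ $\lz a,b,w^*,f_1,f_2,y_2\rz$ for F1 when $w^*\ad f_1$; A2, A3 and H need the extra arguments above. So your detour through a vertex $u\in M_U$ and a four-vertex path inside $M$ is unnecessary.
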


\pf Suppose to the contrary that $X^*$ contains a vertex of a
component $M$ of type different from B6 and D25. By Lemma
\ref{l10}, $X^*$ contains a vertex of another component $L$. Since
$X^*\cap I=\es$, neither $L$ nor $M$ can have type B3 or C.
Moreover, by Lemma \ref{l9}, $L$ and $M$ cannot be of types
D1--D24. Let $L$ have type A4 and $x\in X^*\cap M_X$. Obviously,
$w^*$ is adjacent to $f_2\in L_F$ or $z\in L_Z$, and $w^*\nad
\{y_1,y_2,f_1\}$. If $w^*\ad f_2$, then $\lz
f_1,y_1,y_2,f_2,w^*,x\rz\cong P_6$, while if $w^*\ad z$, then $\lz
y_2,y_1,f_1,z,w^*,x\rz\cong P_6$. Both cases yield a
contradiction. Therefore, neither $L$ nor $M$ may have type A4.

Suppose that one of the components $L$ and $M$, say $L$, is of
type B2, thus there are a component $L'$ of type B3 and a vertex
$w\in W$ such that $w\ad \{f,f'\}$ and $w\nad \{z,z'\}$, where
$f',z'\in L'_X$. Since $w,f,f',z'\in I$, we obtain $w^*\nad
\{w,f,f',z'\}$. We have $\lz w^*,z,f,w,f',z'\rz\cong P_6$. Hence,
neither $L$ nor $M$ can be of type B2. 

Assume now that $L$ has
type B4, thus there are a component $L'$ of type D1 and $w\in W$
such that $w\ad \{f,f_1,f_2\}$ and $w\nad \{z,y_1,y_2\}$, where
$f,z\in L_X$, $\{f_1,f_2\}=L'_X$ and $y_1,y_2\in L'_{PN}$. Since
$w,f,f_1,f_2\in I$, we have $w^*\nad \{w,f,f_1,f_2\}$. We obtain
$w^*\ad y_1$, for otherwise $\lz w^*,z,f,w,f_1,y_1\rz\cong P_6$.
Now $\lz f,z,w^*,y_1,f_1,f_2\rz\cong P_6$. Thus, neither $L$ nor
$M$ can have type B4. 

Suppose that $L$ has type B5, thus there is
the corresponding component $L'$ of type D2. We have $\lz
w^*,z,f,w,f_1,f_2\rz\not\cong P_6$, and hence $w^*\ad f_2$ where
$z,f\in L_X$, $f_1,f_2\in L'_F$. Now $\lz
f,z,w^*,f_2,f_1,y_1\rz\cong P_6$. Hence neither $L$ nor $M$ may
have type B5.

Let $L$ have type E  and $x\in X^*\cap M_X$.
If $w^*\ad z_1$, then $\lz y_2,f_2,f_1,z_1,w^*,x\rz\cong P_6$,
and if $w^*\ad z_2$, then $\lz y_1,f_1,f_2,z_2,w^*,x\rz\cong P_6$.
Hence, neither $L$ nor $M$ may have type E.
Thus, the components $L$ and $M$ may have any of the types A1--A3,
B1, F1--F5, G1--G5, or H, and also $L$ may have type B6 or D25.

The component $L$ contains vertices $a,b$ such that
$\lz a,b,w^*\rz_H\cong P_3$ and $a,w^*$ are its endvertices.
Indeed, $w^*$ is adjacent to $x\in L_X$.
On the other hand, for each of the above type of $L$, $I\cap V(L)\not=\es$
and $w^*\nad I$.
Since $L$ is a connected graph, we conclude that the above vertices
$a,b$ always exist.

It remains to consider the following cases. Let $M$ be of type A1.
We have $\lz a,b,w^*,f_2,$
$z,f_1\rz\cong P_6$, a contradiction.
Suppose that $M$ has type A2 or A3. We know that $w^*\ad z$ and
$w^*\nad \{f_1,f_2\}$. We have $w^*$ dominates $PN(f_2)$, for
otherwise there is a $g\in PN(f_2)$ such that $g\nad w^*$ and $\lz
g,f_2,z,w^*,b,a\rz\cong P_6$. Since $w^*$ cannot dominate $M_U\cup
PN(f_2)$, there is a $u\in M_U$ such that $u\nad w^*$. 

If $M$ is
of type A2, then $w^*\nad p$ and $p$ dominates $M_U$. We obtain
$\lz a,b,w^*,z,p,u\rz\cong P_6$. 

Let $M$ be of type A3, thus there
is a $w\in W$ such that $w$ dominates $M_U$ and $w\nad
\{f_1,y_1,y_2\}$. We have $w\ad z$, for otherwise $\lz
w,u_2,y_2,y_1,f_1,z\rz\cong P_6$. Since $w\in I$, we have $w\nad
w^*$. If $w$ is adjacent to $a$ or $b$, say $w\ad a$, then $\lz
f_1,y_1,y_2,u_2,w,a\rz\cong P_6$. Hence $w\nad \{a,b\}$. We obtain
$\lz a,b,w^*,z,w,u\rz\cong P_6$, a contradiction. 

Now let $M$ have
type B1. We know that $w^*\ad z$ and $w^*\nad \{f,y\}$. If $w^*\ad
p$, then $\lz a,b,w^*,p,y,f\rz\cong P_6$. Hence $w^*\nad p$ and
$\lz a,b,w^*,z,p,y\rz\cong P_6$. 

Let $M$ have type F1. We have
$\lz a,b,w^*,f_1,f_2,y_2\rz\cong P_6$ if $w^*\ad f_1$, and $\lz
a,b,w^*,z,$
$f_2,y_2\rz\cong P_6$ if $w^*\ad z$. 
Let $M$ be of type
F2 or F4. If $w^*\ad f_2$, then $\lz a,b,w^*,f_2,y_2,y_1\rz\cong
P_6$, and if $w^*\nad f_2$, then $\lz a,b,w^*,x,f_2,y_2\rz\cong
P_6$, where $x=f_1$ or $z$ and $x\ad w^*$. If $M$ is of type F3,
then $\lz a,b,w^*,z,f_2,f_1\rz\cong P_6$. If $M$ has type F5, then
$w^*\ad x\in M_X-f_2$. We have $\lz a,b,w^*,x,f_2,y_2\rz\cong
P_6$. 

Let $M$ have type G1, G2 or G4. If $w^*\ad f_2$, then $\lz
a,b,w^*,f_2,y_2,y_1\rz\cong P_6$, and if $w^*\nad f_2$, then $\lz
a,b,w^*,x,f_2,y_2\rz\cong P_6$, where $x=f_1$ or $f_2$ and $x\ad
w^*$. Let $M$ be of type G3. We have $\lz
a,b,w^*,x,f_2,y_2\rz\cong P_6$ where $x=f_1$ or $f_3$. If $M$ has
type G5, then $\lz a,b,w^*,f_3,f_2,f_1\rz\cong P_6$. 

Let $M$ be of
type H. We have $w^*\ad f_2$. Also, $w^*\ad y_1\in PN(f_1)$, for
otherwise $\lz a,b,w^*,f_2,f_1,y_1\rz\cong P_6$. Now $\lz
a,b,w^*,y_1,f_1,f_3\rz\cong P_6$. All cases yield a contradiction.
\qed

Now we are ready to deduce a final contradiction.
By Lemmas \ref{l8}, \ref{l10} and \ref{l11}, the set $X^*$ contains
a vertex of a component $L$ of type B6 and a vertex of a component
$L'$ having type B6 or D25.
Let $L'$ be of type B6 and $y',f',z'$ be vertices defined in this type.
We have $w^*\ad \{f,f'\}$ and $w^*\nad \{y,z,y',z'\}$.
The vertex $w^*$ dominates $PN(z)$, for otherwise
$\lz z',f',w^*,f,z,g\rz\cong P_6$ where $g\in PN(z)$ and $g\nad w^*$.
Analogously, $w^*$ dominates $PN(z')$.
Since $\lz y,f,w^*,f',y'\rz\cong P_5$, we easily deduce that $w^*$
dominates $L_U\cup L'_U$.
Therefore, $L$ is a component of type B2, a contradiction.

Now let $L'$ have type D25 and $f_1,f_2,y_1,y_2,u_1,u_2$ be chosen
as in the definition of this type. We have $w^*\ad f$ and $w^*\nad
\{y,z\}$. We know that one of the sets $\{y_1,y_2\}$,
$\{u_1,f_2\}$ or $\{u_2,f_1\}$ dominates $L'$ and belongs to $I$.
Let $\{y_1,y_2\}\subset I$ and w.l.o.g. $w^*\ad f_1$. We have
$w^*\ad f_2$, for otherwise $\lz z,f,w^*,f_1,f_2,y_2\rz\cong P_6$.
If $g\in PN(z)$ and $g\nad w^*$, then $\lz
g,z,f,w^*,f_1,y_1\rz\cong P_6$. Hence $w^*$ dominates $PN(z)$. If
$u\in L_U$ and $u\nad w^*$, then $\lz u,y,f,w^*,f_1,y_1\rz\cong
P_6$. Therefore, $w^*$ dominates $L_U$. 
Suppose that there is a
$u\in L'_U$ such that $u\nad w^*$. Clearly, $u\ad y_1$ or $u\ad
y_2$, say $u\ad y_1$. We have $\lz z,f,w^*,f_1,y_1,u\rz\cong P_6$.
Thus, $w^*$ dominates $L'_U$. We obtain that $L$ has type B4, a
contradiction. 

Finally, suppose that $\{u_1,f_2\}\subset I$ and $w^*\ad
f_1$. It is easy to show that $w^*$ dominates $L_U\cup PN(z)\cup
L'_{PN}$. Let $u\in N'_2$ and $w^*\nad u_2$. Since $\{u_1,f_2\}$
dominates $L'$, we obtain $u_1\ad u$. Now $\lz
z,f,w^*,y_2,u,u_1\rz\cong P_6$. Hence $w^*$ dominates $N'_2$.
Thus, $L$ must be of type B5, a contradiction. The symmetrical
case $\{u_2,f_1\}\subset I$ yields the same contradiction. The
proof of Theorem \ref{t1} is complete. \qed

We complete the paper with the following open problem:

\begin{prob}
Characterize $P_7$-free irredundance perfect graphs.
\end{prob}

\noindent
{\bf Acknowledgements.} PS was partially supported by NSF grant 2415564.

\end{document}